\documentclass[12pt,a4paper,reqno]{amsart}

\frenchspacing
\usepackage{amsthm}
\usepackage{epsfig}
\usepackage{alltt}
 \usepackage[active]{srcltx}
\usepackage{newlfont}
\usepackage{amsmath}
\usepackage{amssymb}
\usepackage{amsfonts}
\usepackage{amscd}
\usepackage{enumitem}
\setenumerate[1]{label=(\alph*)}
\setenumerate[2]{label=(\roman*)}
\usepackage{fancybox}

\usepackage{verbatim}
\usepackage{xspace} 
\usepackage{changebar}
\changebarsep85pt
\usepackage[OT2,T1]{fontenc}
\usepackage{hyperref}  
\usepackage{mathrsfs}
\usepackage{ulem}
\input{xy}
\xyoption{all}

\newtheorem{Satz}{Satz}[section]

\newtheorem{Proposition}[Satz]{Proposition}
\newtheorem{Proposition*}[Satz]{Proposition*}
\newtheorem{corollary}[Satz]{Corollary}
\newtheorem{Korollar*}[Satz]{Korollar*}
\newtheorem{Lemma}[Satz]{Lemma}
\newtheorem{Lemma*}[Satz]{Lemma*}
\newtheorem{theorem}[Satz]{Theorem}


\theoremstyle{definition}
\newtheorem{Definition}[Satz]{Definition}


\theoremstyle{remark}
\newtheorem{Bemerkungl}[Satz]{}

\newtheorem*{App1*}{Locally closed embeddings}
\newtheorem*{App2*}{Homotopy invariance of sheaf cohomology}
\newtheorem*{App3*}{Equivariant derived categories}
\newtheorem*{Acknowledgment*}{Acknowledgment}

\newtheorem{Bemerkung*}[Satz]{Bemerkung*}



\newsavebox{\kD}
\savebox{\kD}(7,7){
\begin{picture}(7,7)
 \put(7,0){\line(-1,0){7}}
\put(7,0){\line(0,1){7}}
\end{picture}}
\newcommand{\kart}{\ar @{} [dr] |{\begin{picture}(7,7)
 \put(0,0){\usebox{\kD}}
\end{picture}}}

\newsavebox{\ckD}
\savebox{\ckD}(7,7){
\begin{picture}(7,7)
 \put(0,7){\line(0,-1){7}}
\put(0,7){\line(1,0){7}}
\end{picture}}

\newcommand{\ol}[1]{\overline{#1}}

\newcommand{\Sh}{\operatorname{Sh}}

\newcommand{\Ab}{\operatorname{Ab}}
\newcommand{\Hom}{\operatorname{Hom}}
\newcommand{\supp}{\operatorname{supp}}
\newcommand{\sheafHom}{{\mkern3mu\mathcal{H}{om}\mkern3mu}}
\newcommand{\xra}[1]{\xrightarrow{#1}}

\newcommand{\sira}{\xra{\sim}}

\newcommand{\sila}{\overset{\sim}{\leftarrow}}
\newcommand{\opp}{{\operatorname{op}}}

\renewcommand{\amalg}{\sqcup}

\newcommand{\ra}{\rightarrow}

\newcommand{\sra}{\twoheadrightarrow}
\newcommand{\hra}{\hookrightarrow}




\newcommand{\Bl}[1]{{\mathbb{#1}}}

\newcommand{\DZ}{\Bl{Z}}
\newcommand{\DN}{\Bl{N}}

\newcommand{\defind}[1]{{\bf #1}\index{#1}}

\newcommand{\op}{\operatorname}

\newcommand{\co}{\subset}

\newcommand{\pdef}{\mathrel{\mathop:}=}

\begin{document}
\title[Locally proper maps]{Proper base change for separated
  locally proper maps}

\author{Olaf M.\ Schn\"urer
  \and Wolfgang Soergel}
\thanks{Olaf Schn\"urer was supported by the SPP 1388 and the SFB/TR 45 of the DFG}
\thanks{Wolfgang Soergel was supported by the SPP 1388 of the DFG}
\address{Olaf Schn\"urer, 
  Mathematisches Institut,
  Rheinische Friedrich-Wil\-helms-Universit{\"a}t Bonn,
  Endenicher Allee 60,
  53115 Bonn,
  Germany}
\email{olaf.schnuerer@math.uni-bonn.de}
\address{Wolfgang Soergel, Mathematisches Institut,
  Albert-Ludwigs-Uni\-ver\-si\-t\"at Freiburg, Eckerstra\ss{}e 1,
  79104 Freiburg im Breisgau, Germany}
\email{wolfgang.soergel@math.uni-freiburg.de}

\subjclass[2010]{}

\begin{abstract}
  We introduce and study the notion of a locally proper map
  between 
  topological spaces.
  We show that fundamental constructions of sheaf theory,
  more precisely proper base change, projection formula,
  and Verdier duality, can be extended from 
  continuous maps between locally compact Hausdorff spaces
  to separated locally proper maps between
  arbitrary topological spaces.
  %
\end{abstract}

\maketitle 
\setcounter{tocdepth}{1}
\tableofcontents

\section{Introduction}
\label{sec:introduction}

The proper
direct
image functor $f_!$ and its right derived functor ${\op{R}}f_!$  
are defined for any  continuous map  $f \colon Y\ra X$ 
of locally compact Hausdorff spaces, see \cite{KS,spaltenstein,verdier-dualite-bourbaki}.
Instead of 
$f_!$ and  ${\op{R}}f_!$  
we will use the notation $f_{(!)}$ and
$f_!$ for these functors. They are embedded into
a whole collection of formulas known as
the six-functor-formalism of Grothendieck. 
Under the assumption that the proper direct image functor
$f_{(!)}$ has finite cohomological dimension, Verdier proved that
its derived functor $f_!$ admits a right adjoint $f^!$.

In this article we introduce in \ref{LEAm} the notion of a
locally proper map between topological spaces and show that the
above results even hold for arbitrary topological spaces if all
maps whose proper direct image functors are involved are locally
proper and separated.  Every continuous map between locally
compact Hausdorff spaces is separated and locally proper by
\ref{c:loc-cpt-Hd-spaces-sep-lp}, and in this case our results
specialize to the classical theory.

The basic properties of locally proper maps are established in
section~\ref{sec:locally-proper-maps}. 
In particular a map from a topological space  to  a
one point space is separated and locally proper if and only if our space 
is locally compact Hausdorff. Since  the properties of
being 
separated and locally proper are moreover stable under base change 
by \ref{sep-BC} and  \ref{l:BC-localprop}, a separated
locally proper map can be interpreted as a continuous family of
locally compact Hausdorff spaces. 

Our main sheaf theoretic results concerning separated locally
proper maps are the 
proper base change
\ref{t:BaWe}, the projection formula
\ref{t:ProFor}, the derived proper base change
\ref{t:DeBaW}, the derived projection formula
\ref{t:ANNO}, and Verdier duality
\ref{t:VdD-bounded}.
Theorem~\ref{t:comp-bc-proj} 
explains  how all the previous
results work in the setting of unbounded derived categories. 

In the following we list three applications of our extension of
the classical theory described above. 

\begin{App1*}
  An embedding $i \colon Z \hra X$ of topological spaces is
  always separated, and by \ref{l:embedding-locprop} it is
  locally proper if and only if $i(Z)$ is a locally closed subset
  of $X$. In this case the functor $i_{(!)}=i_!$ is the extension
  by zero and the functor $\mathcal{H}^p i^!$ maps a sheaf to its
  $p$-th local cohomology sheaf.
\end{App1*}

\begin{App2*}
  Homotopy invariance of sheaf cohomology can be deduced, as
  explained in \cite[2.7]{KS}, from the fact that given a space
  $X$ and a complex of sheaves $\mathcal F$ on $X$, for the
  projection $\pi\colon X\times [0,1]\ra X$ the unit of the adjunction
  of derived direct and inverse image is an isomorphism $\mathcal
  F\sira \pi_*\pi^*\mathcal F$. Now for an arbitrary topological
  space $X$ the map $\pi$ is always proper and separated, so
  proper base change in the generality of this article allows to
  reduce the claim to the case when $X$ is a one point space,
  which is treated in the literature, say in \cite{KS}.
\end{App2*}

\begin{App3*}
  Fix a topological group $G$ and let $X$ be a $G$-space, i.\,e.\
  a topological space with a continuous $G$-action $G \times X
  \ra X.$ Let $\op{E}G$ be a contractible space with a
  topologically free $G$-action, e.\,g.\ the Milnor construction.
  Then by homotopy invariance of sheaf cohomology the map
  $\op{E}G \ra \{\op{pt}\}$ is acyclic, and so is $p\colon
  \op{E}G \times X \ra X$.  Let $q\colon \op{E}G \times X \ra
  \op{E}G \times_G X$ be the quotient map.
  Then the bounded below equivariant derived category can by
  \cite[2.9.4]{BeLu} be described as the full triangulated
  subcategory of $D^+ (\op{E}G \times_G X)$ given by
  \begin{equation*}
    D^+_G (X) := \{\mathcal{F} \in D^+ (\op{E}G \times_G X)\mid
    \exists \mathcal{G} \in D^+ (X) \text{ such that }
    q^\ast \mathcal{F} \cong p^\ast \mathcal{G}\}.
  \end{equation*}
  Now let $f \colon X \rightarrow Y$ be a $G$-equivariant
  continuous map of locally compact Hausdorff $G$-spaces or even
  a $G$-equivariant separated locally proper map of arbitrary
  $G$-spaces.  We obtain a diagram
  \begin{displaymath}
    \xymatrix{
      \op{E}G \times_G X \ar[d]^-{\bar f} &\ar[l]\op{E}G \times X 
      \ar[d]^-{\hat f} \ar[r] & X \ar[d]^f\\ 
      \op{E}G \times_G Y & \ar[l] \op{E}G \times Y \ar[r] & Y
    }
  \end{displaymath}
  whose vertical maps are induced by $f$. Both squares are
  cartesian so that $\hat{f}$ and $\bar{f}$ are separated locally
  proper by \ref{l:BC-localprop}, \ref{locprop-localtarget},
  \ref{sep-local-target}.  Note however that, given a locally
  compact Hausdorff space $Z$, the spaces $\op{E}G \times Z$ and
  $\op{E}G \times_G Z$ are, in general, not locally compact
  Hausdorff.  Derived proper base change~\ref{t:DeBaW} in the
  generality of this article nevertheless shows that $\bar{f}_!$
  induces a functor $f_! \colon D^+_G (X) \rightarrow D^+_G (Y)$
  which generalizes the proper direct image functor of
  \cite{BeLu}.
\end{App3*}

In the hope that the reader is now sufficiently motivated, let us just add that
from our point of view the main 
new ingredients are the definition of a locally
proper map and the proof of the underived form of proper base change.
Once this is done, we just have to document that the standard arguments
work in this generality as well.




\begin{Acknowledgment*}
  We thank Hanno Becker for useful discussions.
\end{Acknowledgment*}

\section{Locally proper maps}
\label{sec:locally-proper-maps}
\begin{Bemerkungl}
  \label{d:cpt-locally-cpt}
  A topological space  is called \textbf{compact} if every
  open 
  covering has a finite subcovering.  A topological space  is called
 \textbf{locally
    compact} if 
  every neighborhood of any point contains a compact neighborhood
  of this point. We do not require the
  Hausdorff property in either case.
\end{Bemerkungl}
\begin{Bemerkungl}
  The definitions of proper and separated maps and their
  basic properties are recalled in
  Section~\ref{sec:reminders}. 
\end{Bemerkungl}

\begin{Definition}
  \label{LEAm} 
  A map
  $f \colon Y \ra X$ of topological spaces is
called  \textbf{locally proper}
  if 
  it is continuous and if 
  given
  any 
  point $y \in Y$ and any neighborhood $V$ of $y$
  there are neighborhoods $A\subset V$ of $y$ and $U$
  of $f(y)$ such that $f(A) \subset U$ and the induced map
  $f\colon  A \ra U$ is proper.
\end{Definition}

\begin{Bemerkungl}
  The constant map from a topological space $Y$ to a 
  space consisting of a single point is locally proper if and
  only if $Y$ is locally compact.
\end{Bemerkungl}

\begin{Lemma}
  \label{l:embedding-locprop}
  An embedding $i\colon Y\hra X$ of topological spaces is locally
  proper if and only if 
  $i(Y)$ is a locally closed subset of $X.$
\end{Lemma}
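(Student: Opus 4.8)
The plan is to reduce at once to an honest subspace inclusion: an embedding is by definition a homeomorphism onto its image, and both ``$i$ is locally proper'' and ``$i(Y)$ is locally closed'' depend only on the subset $i(Y)\subseteq X$ with its subspace topology, so I may as well assume $Y\subseteq X$ is a subspace and $i$ is the inclusion. The two facts about proper maps that I will invoke are among those recalled in Section~\ref{sec:reminders}: the inclusion of a \emph{closed} subspace is proper, and conversely every proper map is closed, so that a proper inclusion $A\hra U$ forces $A$ to be closed in $U$. I will also use the standard local characterization of local closedness: a subset $Y\subseteq X$ is locally closed if and only if every $y\in Y$ has an open neighborhood $W$ in $X$ with $Y\cap W$ closed in $W$. (If one does not wish to quote this, it is a one-line point-set argument: with $\Omega:=\bigcup_{y}W_y$ one checks that $Y$ is closed in the open set $\Omega$, because any $z\in\overline{Y}\cap W_y$ lies in $\overline{Y\cap W_y}\cap W_y = Y\cap W_y$.)

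For the direction ``$i(Y)$ locally closed $\Rightarrow$ $i$ locally proper'', I fix $y\in Y$ and a neighborhood $V$ of $y$ in $Y$. Write $Y=\overline{Y}\cap U_0$ with $U_0$ open in $X$, and $V=Y\cap V_0$ with $V_0$ a neighborhood of $y$ in $X$; let $W$ be the interior in $X$ of $U_0\cap V_0$. Then $W$ is an open neighborhood of $y$, the set $A:=Y\cap W=\overline{Y}\cap W$ is closed in $W$, and $A$ is an open neighborhood of $y$ in $Y$ contained in $V$. Since $A\hra W$ is the inclusion of a closed subspace it is proper, so $A\subseteq W$ is exactly the pair of neighborhoods required by Definition~\ref{LEAm}.

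For the converse I assume the inclusion $Y\hra X$ is locally proper and fix $y\in Y$. Applying Definition~\ref{LEAm} to the neighborhood $V=Y$ yields a neighborhood $A$ of $y$ in $Y$ and a neighborhood $U$ of $y$ in $X$ with $A\subseteq U$ such that $A\hra U$ is proper; being proper it is closed, so $A$ is closed in $U$. Now pick an open $W\subseteq X$ with $y\in W$, with $W$ contained in the interior of $U$, and with $W$ small enough that $Y\cap W\subseteq A$ (possible since $A$ is a neighborhood of $y$ in $Y$). Then $Y\cap W=A\cap W$ is closed in $W$, and as $y$ was arbitrary the local characterization above shows $Y$ is locally closed in $X$.

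I expect the only real friction to be bookkeeping: the word ``neighborhood'' in Definition~\ref{LEAm} is not assumed to mean ``open neighborhood'', so both directions require a little care in shrinking neighborhoods in $X$ until their traces on $Y$ become closed. Once that is done, and the ``closed $\Leftrightarrow$ proper'' dictionary for inclusions together with the local description of locally closed subsets are in place, every step is a line of elementary point-set topology.
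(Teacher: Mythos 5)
Your proof is correct and follows essentially the same route as the paper: reduce to a subspace inclusion, use the dictionary ``proper embedding $=$ closed embedding'' from \ref{VSU}, and in the nontrivial direction shrink the neighborhoods $A\subset U$ until the trace of $Y$ on a small open set becomes closed. The only difference is that you spell out the converse direction (which the paper dismisses as obvious) and the local characterization of locally closed subsets, both of which check out.
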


\begin{proof}
  We can assume that $i$ is the inclusion of a subset
  $Y$ of $X$ with its induced topology. Recall from \ref{VSU}
  that an embedding is proper if and only if
  it is closed. 

  Assume that $i$ is locally proper. 
  Given $y \in Y$ there
  are a neighborhood $A$ of $y$ in $Y$ and a neighborhood $U$ of
  $y$ in $X$ with $A \subset U$ such that the inclusion $A
  \subset U$ is 
  closed.  
  By replacing $U$ with a smaller open neighborhood of
  $y$ and $A$ by its intersection with this neighborhood we can
  assume that $U$ is open in $X.$ 
  Since $A$ is a neighborhood of
  $y$ there is an open subset $W \subset X$ such that $y \in W
  \cap Y \subset A.$ Then the inclusion $W \cap A \subset W \cap
  U$ is closed and
  $W \cap A = W \cap Y=(W \cap U) \cap Y.$ This shows that $Y$ is
  a locally closed subset of $X.$

  The converse implication is obvious.
\end{proof}

\begin{Bemerkungl}
  \label{rem:comp-locprop}
  Every composition of locally proper maps is locally proper.
  This follows easily using that a composition of proper maps is
  proper by \ref{comp-proper} and that any base change of a proper
  map is proper by \ref{rem:prop-BC}.
\end{Bemerkungl}

\begin{Lemma}[\textbf{Local properness and base change}]
  \label{l:BC-localprop}
  Local properness is stable under base change.
   More precisely, let 
  \begin{equation*}
    \xymatrix{\kart 
      W \ar[r]^{q}\ar[d]_{g} & Y\ar[d]^{f}\\
      Z \ar[r]^{p} & X 
    }  
  \end{equation*}
  be a cartesian diagram of topological spaces and continuous maps.
  If $f$ is
  locally proper so is $g$.
\end{Lemma}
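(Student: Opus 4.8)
The plan is to reduce the claim to the stability of proper maps under base change recalled in \ref{rem:prop-BC}. Continuity of $g$ is part of the hypothesis (it is one of the maps in the given cartesian square), so it remains to verify the local condition of \ref{LEAm}. I identify $W$ with the subspace $\{(z,y)\in Z\times Y\mid p(z)=f(y)\}$ of $Z\times Y$, so that $g$ and $q$ become the two projections. Fix a point $w=(z,y)\in W$ and a neighborhood $V$ of $w$. Since the sets $(O_Z\times O_Y)\cap W$ with $O_Z\subset Z$ and $O_Y\subset Y$ open form a neighborhood basis at $w$, I may shrink $V$ and assume $V=(O_Z\times O_Y)\cap W$ with $z\in O_Z$ and $y\in O_Y$.

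Next I apply the local properness of $f$ to the point $y=q(w)$ and its neighborhood $O_Y$: this yields a neighborhood $B\subset O_Y$ of $y$ and a neighborhood $U'$ of $f(y)=p(z)$ with $f(B)\subset U'$ such that the induced map $f\colon B\ra U'$ is proper. I then set $U:=p^{-1}(U')\cap O_Z$, which is a neighborhood of $z=g(w)$ in $Z$ since $p^{-1}(U')$ is one, and $A:=(U\times B)\cap W$. Because $U\times B$ is a neighborhood of $w$ in $Z\times Y$, the set $A$ is a neighborhood of $w$ in $W$, and $A\subset(O_Z\times O_Y)\cap W=V$ because $U\subset O_Z$ and $B\subset O_Y$. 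Now comes the crucial observation: since $U\subset p^{-1}(U')$ and $f(B)\subset U'$, for $(z',y')\in U\times B$ the equality $p(z')=f(y')$ in $X$ is equivalent to the same equality in $U'$; hence $A$, with its subspace topology, is exactly the fiber product $U\times_{U'}B$ of $p|_U\colon U\ra U'$ and $f\colon B\ra U'$. Under this identification $g|_A\colon A\ra U$ is the projection $U\times_{U'}B\ra U$, i.e.\ the base change of the proper map $f\colon B\ra U'$ along $p|_U$, so it is proper by \ref{rem:prop-BC}. This verifies \ref{LEAm} for $g$.

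The one step needing genuine care is the identification of $A$ with the topological fiber product $U\times_{U'}B$: one must check both that $A$ and $U\times_{U'}B$ coincide as sets (using $U\subset p^{-1}(U')$ and $f(B)\subset U'$) and that the subspace topology on $A$ inherited from $W$ — equivalently from $Z\times Y$ — agrees with the fiber-product topology formed over $U'$, which holds because $U\times B$ already carries the subspace topology from $Z\times Y$. Granting this, \ref{rem:prop-BC} applies verbatim and everything else is routine manipulation of neighborhoods.
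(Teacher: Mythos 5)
Your proof is correct and follows essentially the same route as the paper: shrink the given neighborhood of $w$ to one of the form $(O_Z\times O_Y)\cap W$, apply local properness of $f$ at $q(w)$, and recognize the resulting piece of $g$ as a base change of a proper map, so that \ref{rem:prop-BC} applies. The paper's write-up is just slightly more compressed (it replaces the neighborhood by a cartesian sub-square and takes $q^{-1}(A)\ra p^{-1}(N)$), but the identification you spell out carefully is exactly what is implicitly used there.
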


\begin{proof}
  Let $w\in W$ together with a neighborhood $S \subset W$ be
  given.
  Let us identify $W =Z \times_X Y.$ Then $w=(g(w),q(w)).$ We
  find neighborhoods $V\subset Y$ of $q(w)$ and 
  $U\subset Z$ of $g(w)$ such that $U \times_X V \subset S.$
  By replacing $S$ by $U \times_X V$ we can assume that the
  diagram 
  $$\xymatrix{\kart
    S \ar[r]^{q}\ar[d]_{g} & V\ar[d]^{f}\\
    U \ar[r]^{p} & X }$$ is cartesian.  We find neighborhoods
  $A\subset V$ of $q(w)$ and $N \subset X$ of $f(q(w))$ such that
  $f\colon A \ra N$ is proper. The same is true for the map
  $g\colon q^{-1}(A)\ra p^{-1}(N)$ obtained by base change, by
  \ref{rem:prop-BC}.
\end{proof}

\begin{Bemerkungl}
  \label{locprop-localsource}
  The property of being locally proper is local on the
  source.
  Namely, let 
  $f \colon  Y \ra X$ be a map and let $\mathcal{V}$ be an
  open covering of $Y$. Then $f$ is locally 
  proper if and only if its restrictions $f|_V\colon V \ra
  Y$ are locally proper for all $V \in \mathcal{V}$, as follows easily
    using
 \ref{l:embedding-locprop}.
\end{Bemerkungl}

\begin{Bemerkungl}
  \label{locprop-localtarget}
  The property of being locally proper is local on the
  target.
  Namely, let 
  $f \colon  Y \ra X$ be a map and let $\mathcal{U}$ be an
  open covering of $X$. Then $f$ is locally 
  proper if and only if the induced maps $f^{-1}(U) \ra U$ are
  locally proper for all 
  $U \in \mathcal{U}$, as follows easily
    using
 \ref{l:BC-localprop}.  
\end{Bemerkungl}

\begin{Lemma}
  \label{l:locprop-to-sep}
  Let $g \colon Z \ra Y$ and $f \colon Y \ra X$ be
  continuous
  maps of 
  topological spaces. Assume that $f \circ g$ is locally proper
  and that $f$ is separated.
  Then $g$ is
  locally proper.
\end{Lemma}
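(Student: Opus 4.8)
The plan is to reduce the local properness of $g$ to a statement about proper maps and embeddings, exploiting that a separated map has a "diagonal" trick: $g$ factors through the graph embedding, which is a base change of the diagonal of $f$. Concretely, consider the graph morphism $\Gamma_g \colon Z \ra Z \times_X Y$, $z \mapsto (z, g(z))$. Since $f$ is separated, the diagonal $\Delta_f \colon Y \ra Y\times_X Y$ is a closed embedding, and $\Gamma_g$ is obtained from $\Delta_f$ by base change along $g \times \op{id}\colon Z\times_X Y \ra Y\times_X Y$ (or more precisely, $\Gamma_g$ sits in a cartesian square with $\Delta_f$). Hence $\Gamma_g$ is a closed embedding, in particular proper, and certainly locally proper by \ref{l:embedding-locprop}. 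On the other hand, $g$ equals the composite
\begin{equation*}
  Z \xra{\Gamma_g} Z\times_X Y \xra{\op{pr}_2} Y,
\end{equation*}
but this composite only recovers $g$ if we instead use $\op{pr}_1$; the correct factorization is $g = \op{pr}_Z \circ \Gamma_g$ where $\op{pr}_Z\colon Z\times_X Y \ra Z$. That projection is the base change of $f$ along $f\circ g$... no: it is the base change of $f \colon Y \ra X$ along $g\colon Z \ra X$... here $Z \ra X$ means $f\circ g$. So $\op{pr}_Z$ is locally proper by \ref{l:BC-localprop}, since $f$ is locally proper — wait, we are only told $f\circ g$ is locally proper, not $f$. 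Let me instead use the other projection.

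**The cleaner approach:** factor $g$ through the graph inside the product over $X$ of $Z$ and $Y$, but arrange the cartesian square so the projection we land on is a base change of $f\circ g$. Write $W = Z\times_X Y$ with projections $a\colon W\ra Z$ and $b\colon W\ra Y$. Then $a$ is the base change of $f$ along $f g$, and $b$ is the base change of $fg$ along $f$; the latter is locally proper by \ref{l:BC-localprop} since $fg$ is locally proper. Now $\Gamma_g\colon Z\ra W$ satisfies $a\circ\Gamma_g = \op{id}_Z$ and $b\circ \Gamma_g = g$. Moreover $\Gamma_g$ is a closed embedding because $f$ is separated: it is the base change of $\Delta_f$ along $g\times_X\op{id}_Y\colon W \ra Y\times_X Y$, hence closed, hence proper, hence locally proper by \ref{l:embedding-locprop}. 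Therefore $g = b\circ\Gamma_g$ is a composition of locally proper maps, so $g$ is locally proper by \ref{rem:comp-locprop}. \textbf{The main obstacle} is verifying carefully that $\Gamma_g$ is indeed a base change of the diagonal $\Delta_f$ — i.e.\ identifying the relevant cartesian square $Z\xra{\Gamma_g} W$, $W\xra{g\times\op{id}}Y\times_X Y$ lying over $Y\xra{\Delta_f}Y\times_X Y$, $W\xra{a}Z$... — and checking, via the separatedness reminders in Section~\ref{sec:reminders}, that closedness (hence properness) of $\Delta_f$ really does transfer to $\Gamma_g$ under this base change using \ref{rem:prop-BC}.

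**Alternatively**, one can argue more directly and locally, mirroring the proof of \ref{l:embedding-locprop}: given $z\in Z$ and a neighborhood $S$ of $z$, one wants neighborhoods $A\subset S$ of $z$ and $U$ of $g(z)$ with $g(A)\subset U$ and $g\colon A\ra U$ proper. Using that $fg$ is locally proper, choose neighborhoods $A_0\subset S$ of $z$ and $N$ of $f(g(z))$ with $fg(A_0)\subset N$ and $fg\colon A_0\ra N$ proper. Then shrink on the $Y$-side: pick a neighborhood $U$ of $g(z)$ inside $f^{-1}(N^\circ)$, replace $A_0$ by $A := A_0\cap g^{-1}(U)$, and observe that $g\colon A\ra U$ is the restriction of a proper map $A_0 \ra N$ followed by the diagonal trick — precisely, $g|_A$ factors as $A\hra A\times_N U$ (a closed embedding since $f$, hence $f|_{f^{-1}(N)}\colon f^{-1}(N)\ra N$, is separated) composed with the projection $A\times_N U\ra U$, which is proper as a base change of the proper map $A_0\ra N$. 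Either way the skeleton is: separatedness makes the graph a closed embedding, base change of the locally proper (or proper) map gives a locally proper projection, and \ref{rem:comp-locprop} glues the two.
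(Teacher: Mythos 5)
Your ``cleaner approach'' is correct, and it is genuinely different in organization from the paper's proof. The paper argues pointwise from Definition~\ref{LEAm}: it picks $A\subset V$ and $U$ with $f\circ g\colon A\ra U$ proper, factors this as $A\xra{g'}f^{-1}(U)\xra{f'}U$ with $f'$ separated by \ref{sep-BC}, and concludes from Lemma~\ref{l:SaA} that $g'\colon A\ra f^{-1}(U)$ is proper --- which already witnesses local properness of $g$ at $z$, since the definition places no constraint on the neighborhood of $g(z)$. You instead globalize the same diagonal trick: $\Gamma_g\colon Z\ra Z\times_XY$ is a base change of $\Delta_f$, hence a closed embedding and locally proper by \ref{l:embedding-locprop}, while $b\colon Z\times_XY\ra Y$ is a base change of the locally proper map $f\circ g$ and so locally proper by \ref{l:BC-localprop}; then \ref{rem:comp-locprop} finishes. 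This is a valid and self-contained route (note that \ref{l:SaA} is itself proved by exactly this graph construction, so you are essentially re-running its proof one level up); it costs you the stability lemmas \ref{l:BC-localprop} and \ref{rem:comp-locprop} but spares you from unwinding the definition. Two small blemishes: the first paragraph's confusion about which projection recovers $g$ should be excised (only the corrected factorization $g=b\circ\Gamma_g$ belongs in a final write-up), and in your alternative local sketch the proper projection is $A_0\times_NU\ra U$ (base change of the proper map $A_0\ra N$), not $A\times_NU\ra U$, since $A\ra N$ need not be proper; moreover that whole shrinking of the target neighborhood is unnecessary for the reason noted above.
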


\begin{proof}
  Let $V \subset Z$ be a given neighborhood of a point $z \in Z.$
  Since $f \circ g$ is locally proper, there are neighborhoods
  $A \subset V$ of $z$ and $U$ of $f(g(z))$ such that $f \circ g$
  induces a proper map $A \ra U.$
  This map factors as
  \begin{equation*}
    A \xrightarrow{g'} f^{-1}(U) \xrightarrow{f'} U
  \end{equation*}
  where $g'$ and $f'$ are induced by $g$ and $f$ respectively.
  Since $f'$ is separated by \ref{sep-BC},
  Lemma~\ref{l:SaA} shows that 
  $g'\colon A \ra f^{-1}(U)$ is proper.
  This shows that $g$ is locally proper.
\end{proof}

\begin{corollary}
  \label{c:loc-cpt-Hd-spaces-sep-lp}
  Every continuous map $g\colon Z \ra Y$ 
  from a locally compact
  Hausdorff space $Z$ to a Hausdorff space $Y$ is separated and
  locally proper. 
\end{corollary}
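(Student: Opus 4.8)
The plan is to deduce both assertions by comparing $g$ with the constant maps to a one point space and invoking the cancellation results above. Write $a_Z \colon Z \ra \{\op{pt}\}$ and $a_Y \colon Y \ra \{\op{pt}\}$ for these constant maps, so that $a_Z = a_Y \circ g$. Since $Z$ is locally compact, $a_Z$ is locally proper by the remark following Definition~\ref{LEAm}; since $Z$ and $Y$ are Hausdorff, both $a_Z$ and $a_Y$ are separated, this being the standard reformulation of the Hausdorff property via closedness of the diagonal, recalled in Section~\ref{sec:reminders}.

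Local properness of $g$ then follows at once from Lemma~\ref{l:locprop-to-sep} applied to the factorization $a_Z = a_Y \circ g$: the composite $a_Z$ is locally proper and the second map $a_Y$ is separated, so $g$ is locally proper. For separatedness of $g$ I would use the analogous left cancellation property for separated maps, namely that $a_Y \circ g$ separated implies $g$ separated. This is elementary: $Z \times_Y Z$ is a subspace of $Z \times_X Z$ (both being subspaces of $Z \times Z$) and the diagonal of $Z$ is contained in $Z \times_Y Z$, so if it is closed in $Z \times_X Z$ it is a fortiori closed in $Z \times_Y Z$; alternatively one cites the corresponding statement from Section~\ref{sec:reminders}. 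Since $a_Z = a_Y \circ g$ is separated, we conclude that $g$ is separated.

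I do not expect a genuine obstacle here: everything reduces to the two cancellation facts together with the dictionary between the Hausdorff property of a space and separatedness of its map to a point. The only point deserving attention is bookkeeping — in Lemma~\ref{l:locprop-to-sep} it is the \emph{right-hand} factor $a_Y$ that must be separated, whereas for the cancellation of separatedness it is the \emph{composite} $a_Z$ that must be separated, and both hold by the observations of the first paragraph.
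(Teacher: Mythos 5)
Your argument is correct and coincides with the paper's own proof: local properness of $g$ is obtained from Lemma~\ref{l:locprop-to-sep} applied to the factorization of the constant map $Z \ra \{\mathrm{pt}\}$ through $a_Y$ (which is separated since $Y$ is Hausdorff), and separatedness of $g$ follows from the cancellation fact that any continuous map with Hausdorff source is separated, as recalled in Section~\ref{sec:reminders}. No gaps.
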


\begin{proof}
  Since $Z$ is locally compact and $Y$ is Hausdorff, $g$ is
  locally proper by 
  \ref{l:locprop-to-sep}. Since $Z$ is Hausdorff, 
  $g$ is separated.
\end{proof}

\begin{Proposition}
  \label{p:sele}
  Every proper and separated map is locally proper.
\end{Proposition}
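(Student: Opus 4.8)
The plan is to verify the condition in Definition~\ref{LEAm} directly. Fix a point $y\in Y$ and a neighbourhood $V$ of $y$; shrinking $V$ we may assume it is open. Write $x=f(y)$ and let $F=f^{-1}(x)$ be the fibre through $y$. Two facts will be used. First, $F$ is quasi-compact: the map $F\to\{x\}$ is a base change of $f$, hence proper by~\ref{rem:prop-BC}, and a space admitting a proper map to a point is quasi-compact (Section~\ref{sec:reminders}). Second, by definition separatedness of $f$ means that the diagonal $Y\to Y\times_X Y$ is closed, which is readily seen to be equivalent to the following: any two distinct points of $Y$ having the same image under $f$ admit disjoint open neighbourhoods in $Y$. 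We stress that $X$ is not assumed to be $T_1$, so $F$ need not be a closed subset of $Y$.

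The heart of the argument is to produce an open neighbourhood $W$ of $y$ in $Y$ with $\overline{W}\cap F\subset V$, where $\overline{W}$ is the closure of $W$ in $Y$. The set $F\setminus V$ is closed in the quasi-compact space $F$, hence quasi-compact, and it does not contain $y$. For each $z\in F\setminus V$ separatedness of $f$ provides disjoint open subsets $W_z\ni y$ and $B_z\ni z$ of $Y$. The sets $B_z$ form an open cover of $F\setminus V$, so finitely many of them, $B_{z_1},\dots,B_{z_n}$, already cover it; put $W:=V\cap W_{z_1}\cap\dots\cap W_{z_n}$, an open neighbourhood of $y$. If some $p\in\overline{W}\cap F$ did not lie in $V$, it would lie in some $B_{z_i}$, and then the neighbourhood $B_{z_i}$ of $p$ would have to meet $W$; but $W\subset W_{z_i}$ and $W_{z_i}\cap B_{z_i}=\emptyset$, a contradiction. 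Hence $\overline{W}\cap F\subset V$.

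Now finish as follows. The set $\overline{W}\setminus V$ is closed in $Y$ and, by the previous paragraph, disjoint from $F$; since $f$ is proper it is closed, so $f(\overline{W}\setminus V)$ is a closed subset of $X$ not containing $x$, and $U:=X\setminus f(\overline{W}\setminus V)$ is an open neighbourhood of $x$. Set $A:=\overline{W}\cap f^{-1}(U)$. Then $A$ is closed in $f^{-1}(U)$; it contains the open set $W\cap f^{-1}(U)$, which contains $y$, so $A$ is a neighbourhood of $y$ in $Y$; any point of $A$ outside $V$ would lie in $\overline{W}\setminus V$ and hence have image outside $U$, contradicting $A\subset f^{-1}(U)$, so $A\subset V$; and $f(A)\subset U$ is clear. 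Finally $f\colon A\to U$ is proper: the inclusion $A\hookrightarrow f^{-1}(U)$ is a closed embedding, hence proper by~\ref{VSU}; the map $f^{-1}(U)\to U$ is proper, being a base change of $f$, by~\ref{rem:prop-BC}; and the composite $A\to U$ is proper by~\ref{comp-proper}. Thus $A$ and $U$ are the neighbourhoods demanded in Definition~\ref{LEAm}.

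I expect the main obstacle to be exactly the construction of $W$ in the second paragraph. The naive idea --- take a neighbourhood of $y$ inside $F$ with small closure (available since a quasi-compact Hausdorff $F$ is regular) and extend it to an open subset of $Y$ --- does not work, because closure does not commute with intersection and, $X$ not being $T_1$, the fibre $F$ need not be closed in $Y$, so the extension may pick up unwanted points of $F$ in its closure. Replacing this by a finite-cover argument based on quasi-compactness of $F\setminus V$ together with the neighbourhood form of separatedness, which takes place directly in $Y$, circumvents the difficulty.
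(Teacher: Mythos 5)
Your proof is correct and follows essentially the same route as the paper's: separate $y$ from the compact set $f^{-1}(x)\setminus V$ by finitely many pairs of disjoint open sets, shrink $U$ using closedness of $f$, and take for $A$ a closed subset of $f^{-1}(U)$ containing an open neighbourhood of $y$. The only cosmetic difference is that you take closures in $Y$ first and then intersect with $f^{-1}(U)$, whereas the paper takes the closure directly inside $f^{-1}(U)$.
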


\begin{Bemerkungl}
  This generalizes the fact that every compact Hausdorff space is
  locally compact.
\end{Bemerkungl}

\begin{proof}
  Let $f \colon Y \ra X$ be a proper and separated map.
  Let $y \in Y$ together with an open neighborhood $V \co Y$ be
  given. Put $x \pdef
  f(y)$ and consider $Z = Y \setminus V$. For every point $z \in f^{-1} (x) \cap Z$ there is an open
  neighborhood $W_z \co Y$ of $z$ and an open neighborhood $B_z
  \co V$ of $y$ with $W_z \cap B_z = \emptyset$. Finitely many of
  these $W_z$ cover the compact set $f^{-1} (x) \cap
  Z$. Let $W$ be the union of these sets and $B$ the
  intersection of the corresponding sets $B_z$. Then $y \in B$,
  $B$ is open in $V$, $(f^{-1} (x) 
  \cap Z) \subset W$, $W$ is open in $Y$, and $B \cap W =
  \emptyset$. Now 
  we find an open neighborhood $U \co X$ of $x$ with $(f^{-1}
  (U) \cap Z) \subset W$, we may for example take the complement of
  the closed subset $f(Z\ \cap (Y\setminus W))$ of $X$.
  Then
  the closure $A$
  of $B \cap f^{-1} (U)$ in $f^{-1} (U)$ has empty intersection
  with $f^{-1} (U)
  \cap Z$ and hence is a neighborhood of $y$ contained in
  $V$ such that the induced 
  map $f\colon  A \rightarrow U$ is proper.
\end{proof}

\section{Proper direct image}
\label{sec:proper-direct-image}

\begin{Bemerkungl}
  By a sheaf we mean a sheaf of abelian groups.
  The constant sheaf with stalk $\DZ$ on a topological space
  $X$ is denoted $\DZ_X.$
  Given a sheaf $\mathcal{F}$ on $X$ we abbreviate as usual
  $\Gamma\mathcal{F}= \Gamma(\mathcal{F})=
  \Gamma(X;\mathcal{F})= \mathcal{F}(X)$ its 
set of global sections 
and write
  $\mathcal{F}(Z)=\Gamma(Z; \mathcal{F})= \Gamma(Z;
  \mathcal{F}|_Z)$ if $Z \subset X$ is an arbitrary subset of
  $X.$

  We write $\Sh(X)$ 
   for the abelian category of sheaves on $X$.
  Given a complex $\mathcal{F}$ in $\Sh(X)$
  we denote its $p$-th cohomology sheaf by
  $\mathcal{H}^p(\mathcal{F})$.
  We write $C(X)$ 
  for the abelian category of
  complexes 
  in $\Sh(X)$,
  denote the corresponding homotopy category by
  $K(X)$ 
  and the corresponding derived category
  by $D(X)$. Let $D^+(X) \subset D(X)$ be the full triangulated
  subcategory consisting of complexes with bounded below
  cohomology sheaves. 

  Given a continuous map $f$ of topological spaces
  we use the notation
  $f^{(\ast)}$ and $f_{(\ast)}$ for the inverse and direct image
  functors of sheaves, in contrast to the usual notation
  $f^{-1}$ and $f_\ast$ in the literature, and
  denote their derived functors by
  $f^\ast$ and $f_\ast$, in contrast to the usual notation
  $\mathrm Lf^\ast$ and $\mathrm Rf_\ast$.

  Given a category $\mathcal{C}$ and
  objects $A,$ $B\in\mathcal{C}$ we denote the set of morphisms by
  $\mathcal{C}(A,B)$ or $\Hom_\mathcal{C}(A,B).$
 We
  write $\Sh_X(A,B)$ instead of $(\Sh(X))(A,B).$
\end{Bemerkungl}

\begin{Definition}
  Let $f\colon Y\ra X$ be a continuous map of topological
  spaces. The  {\bf
    proper direct image}\label{EDBi} 
  $f_{(!)} \colon  \Sh(Y) \ra \Sh(X)$
  is defined by
  \begin{equation*}
    (f_{(!)}\mathcal{F}) (U) \pdef \{ s \in \mathcal{F} 
    (f^{-1}(U)) \mid \text{$f\colon (\supp s) \ra U$ is proper}
    \}.  
  \end{equation*}
  Since by \ref{prop-localtarget} properness is local on the target,
  $f_{(!)}  \mathcal{F} \subset f_{(\ast)} \mathcal{F}$ is a
  subsheaf of sets and by \ref{VUAa} even a subsheaf of
  abelian groups.
  Obviously, $f_{(!)}$ is a left exact
  functor. For proper $f$ we have 
  $f_{(!)}= f_{(\ast)}$ by \ref{VSU}.  
  Given a sheaf
  $\mathcal{F}$ on $Y$ we denote by
  \begin{equation*}
    \Gamma_!\mathcal{F}=\Gamma_!(\mathcal{F})= 
    \Gamma_!(Y;\mathcal{F})=(c_{(!)}\mathcal{F})(X)
  \end{equation*}
  the abelian group of sections with compact support where $c$ is
  the unique map from $Y$ to a space consisting of a single
  point.  If $Z \subset Y$ is any subset we abbreviate
  $\Gamma_!(Z;\mathcal{F})=\Gamma_!(Z; \mathcal{F}|_Z).$
\end{Definition}

\begin{Bemerkungl}
  Usually the functor $f_{(!)}$ is studied for continuous
  maps of locally compact Hausdorff spaces.
  Our aim is to show that $f_{(!)}$ behaves well in
  a more general setting, namely for separated locally
  proper maps $f$. 
 \end{Bemerkungl}

\begin{Bemerkungl}
  \label{rem:shriek-embedding}
  Let $i\colon Y \hra X$ be an embedding of a locally closed
  subset. Then 
  $i_{(!)}\mathcal{F}$ is the extension of $\mathcal{F}\in \Sh(Y)$
  by zero which is denoted by $i_!\mathcal{F}$ in 
  \cite[Exp.~I]{SGA-2-new}
  and by $\mathcal{F}^X$ in \cite[Thm.~2.9.2]{godement}.
  The functor $i_{(!)} \colon \Sh(Y) \ra \Sh(X)$ is exact and has a
  right adjoint functor $i^{(!)}$ which is denoted $i^!$ in 
  \cite[Exp.~I]{SGA-2-new}.
  If $Y$ is closed in $X$ we have $i_{(!)}=i_{(\ast)}$.
  If $Y$ is open in $X$ we have $i^{(!)}=i^{(\ast)}$.
\end{Bemerkungl}

\begin{Bemerkungl}
  \label{rem:supp-direct-image}
  Let $f \colon Y \ra X$ be a continuous map and $\mathcal{F}$ a
  sheaf on $Y.$ 
  A global section $s \in (f_{(\ast)}\mathcal{F})(X)$
  is the same thing as
  a global section
  $s' \in \mathcal{F}(Y).$
  The supports of these sections are related by
  $\overline{f(\supp s')}=\supp s.$ 
\end{Bemerkungl}

\begin{theorem}[{\bf Iterated proper images}]
  \label{t:DBE}
  Given continuous maps $g \colon Z \ra Y$ and $f\colon Y \ra X$
  with $f$ separated, the usual identity
  $f_{(\ast) }\circ g_{(\ast)} =
  (f \circ g)_{(\ast)}$ 
  of functors $\Sh(Z) \ra \Sh(X)$ restricts to an identity
  \begin{equation*}
    f_{(!)} \circ g_{(!)} = (f \circ g)_{(!)}.
  \end{equation*}
\end{theorem}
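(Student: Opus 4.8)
The plan is to verify that for every sheaf $\mathcal{F}$ on $Z$ the two subsheaves $f_{(!)}(g_{(!)}\mathcal{F})$ and $(f\circ g)_{(!)}\mathcal{F}$ of the sheaf $f_{(\ast)}(g_{(\ast)}\mathcal{F})=(f\circ g)_{(\ast)}\mathcal{F}$ coincide; here $f_{(!)}(g_{(!)}\mathcal{F})$ is a subsheaf of $f_{(\ast)}(g_{(\ast)}\mathcal{F})$ because $f_{(\ast)}$, being left exact, preserves the inclusion $g_{(!)}\mathcal{F}\hra g_{(\ast)}\mathcal{F}$. Since the two subsheaves will be seen to agree on sections over every open set, they agree as subfunctors of $(f\circ g)_{(\ast)}$, which is the claim. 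So I would fix an open $U\subset X$, put $V\pdef f^{-1}(U)$ and $W\pdef(f\circ g)^{-1}(U)=g^{-1}(V)$, and recall that a section over $U$ of $(f\circ g)_{(\ast)}\mathcal{F}$ is just a section $s\in\mathcal{F}(W)$; write $S\pdef\supp s\subset W$, a closed subset. By \ref{rem:supp-direct-image}, the support of the section of $g_{(\ast)}\mathcal{F}$ over $V$ corresponding to $s$ equals $\overline{g(S)}\cap V$, the closure being taken in $Y$.

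Unwinding the definitions of $f_{(!)}$ and $g_{(!)}$, the section $s$ lies in $((f\circ g)_{(!)}\mathcal{F})(U)$ precisely when the induced map $(f\circ g)|_S\colon S\to U$ is proper, while it lies in $(f_{(!)}(g_{(!)}\mathcal{F}))(U)$ precisely when $g|_S\colon S\to V$ is proper \emph{and} $f$ induces a proper map $\overline{g(S)}\cap V\to U$. The observation that makes the comparison work is that if $g|_S\colon S\to V$ is proper then it is closed, so $g(S)$ is already closed in $V$ and $\overline{g(S)}\cap V=g(S)$; thus the second condition is equivalent to ``$g|_S\colon S\to V$ proper and $f$ induces a proper map $g(S)\to U$''. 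Hence the theorem reduces to the topological statement that, in the factorization $S\xra{g|_S}V\xra{f|_V}U$ of $(f\circ g)|_S$, the composite $(f\circ g)|_S$ is proper if and only if $g|_S\colon S\to V$ is proper and $f|_{g(S)}\colon g(S)\to U$ is proper.

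One implication is immediate: given the two properness assumptions, restrict the target of the proper map $g|_S\colon S\to V$ to the closed subset $g(S)$ — a base change along the closed embedding $g(S)\hra V$, hence proper by \ref{rem:prop-BC} — and compose with the proper map $f|_{g(S)}\colon g(S)\to U$; by \ref{comp-proper} the composite $(f\circ g)|_S$ is proper. Conversely, suppose $(f\circ g)|_S$ is proper. The map $f|_V\colon V\to U$ is separated, being the base change of the separated map $f$ along $U\hra X$ (\ref{sep-BC}); since $(f\circ g)|_S=f|_V\circ g|_S$ is proper, Lemma~\ref{l:SaA} shows that $g|_S\colon S\to V$ is proper, hence closed, hence $g|_S\colon S\to g(S)$ is a \emph{surjective} proper map. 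Finally $(f\circ g)|_S=f|_{g(S)}\circ\bigl(g|_S\colon S\to g(S)\bigr)$ is proper with the first factor surjective, and it follows that $f|_{g(S)}\colon g(S)\to U$ is proper.

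The last step — a continuous surjection whose composite with a further map is proper forces that further map to be proper — is the only ingredient beyond what has been made explicit above, and I expect it to be the main, if minor, obstacle. It is a standard property of proper maps recalled in Section~\ref{sec:reminders}: one checks directly that $f|_{g(S)}$ is a closed map (the image of a closed $C\subset g(S)$ is the image, under the proper and hence closed composite $(f\circ g)|_S$, of the closed preimage of $C$ in $S$) with quasi-compact fibres (each fibre is the continuous image of a quasi-compact fibre of $(f\circ g)|_S$). Everything else is a direct unwinding of the definitions of $f_{(!)}$ and $g_{(!)}$ together with \ref{rem:supp-direct-image}, \ref{comp-proper}, \ref{rem:prop-BC}, \ref{sep-BC}, and Lemma~\ref{l:SaA}.
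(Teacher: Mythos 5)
Your proof is correct and follows essentially the same route as the paper's: unwind both membership conditions over an open $U\subset X$, observe that properness of $g|_S\colon S\to V$ makes $g(S)$ closed so the support of the intermediate section is $g(S)$ itself, and then settle the two directions with \ref{comp-proper}/\ref{rem:prop-BC} one way and with Lemma~\ref{l:SaA} (using separatedness of $f$) plus \ref{EWEi} the other way. The ``last step'' you flag as a possible obstacle is exactly the paper's \ref{EWEi}, so nothing further is needed.
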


\begin{proof}
  Let $\mathcal{F}$ be a sheaf on $Z$ and $U\co X$ an open
  subset.
  Let $s \in (f_{(\ast)}(g_{(\ast)}\mathcal{F}))(U)$ be a section.
  We can interpret $s$ also as a
  section $s' \in (g_{(\ast)}\mathcal{F})(f^{-1}(U))$
  or as a section $s'' \in \mathcal{F}(g^{-1}(f^{-1}(U))),$ and also as
  a section $t \in ((f \circ g)_{(\ast)}\mathcal{F})(U).$

  Assume that $s$ is in $(f_{(!)}(g_{(!)}\mathcal{F}))(U).$ 
  Equivalently, this means that $s'$ is in $(g_{(!)}
  \mathcal{F})(f^{-1}(U))$ with $f\colon (\supp s') \ra U$ proper
  or, equivalently, that 
  $s''$ is in $\mathcal{F} (g^{-1}(f^{-1}(U)))$ with
  $g \colon (\supp s'') \ra f^{-1}(U)$ proper and
  $f \colon 
  g(\supp s'') \ra U$ proper. Here we use
  \ref{rem:supp-direct-image} and the fact that $g(\supp s'')$ is
  closed in $f^{-1}(U)$.
  Then the induced map 
  $g \colon (\supp s'') \ra g(\supp s'')$
  is proper by 
  \ref{l:SaA} and so is the composition
  $f \circ g \colon (\supp s'') \ra U$ by
  \ref{comp-proper}.
  This means that $t \in ((f \circ g)_{(!)}\mathcal{F})(U).$
  We have not yet used that $f$ is separated.

  Conversely, assume that
  $t \in ((f \circ g)_{(!)}\mathcal{F})(U).$
  This means that
  $f\circ g\colon (\supp s'') \ra U$ is proper.
  Since $f$ is separated, $g\colon (\supp s'') \ra f^{-1} (U)$ is
  proper  
  by \ref{l:SaA}, so 
  $s'$ is in $(g_{(!)} \mathcal{F})(f^{-1}(U))$
  and 
  $(\supp s') = g(\supp s'').$
  This and \ref{EWEi} imply that
  $f\colon (\supp s') \ra U$ is proper.
  Hence  $s \in (f_{(!)}(g_{(!)}\mathcal{F}))(U).$ 
\end{proof}

\section{Proper base change}
\label{sec:proper-base-change}
\begin{Bemerkungl}
 In the following we will often work with a cartesian diagram
  \begin{equation*}
    \xymatrix{\kart 
      W \ar[r]^{q}\ar[d]_{g} & Y\ar[d]^{f}\\
      Z \ar[r]^{p} & X 
    }  
  \end{equation*}
  of topological spaces and continuous maps. Let us refer to it as
``given a cartesian diagram $f\circ q=p\circ g$'' without mentioning the
spaces involved.\label{cDi} 
\end{Bemerkungl}

\begin{Lemma}
  \label{l:BaWW1}
  Given a cartesian diagram $f\circ q=p\circ g$ of topological spaces as in
  \ref{cDi} 
  the identity
  $f_{(\ast)} \circ q_{(\ast)}=
  p_{(\ast)} \circ g_{(\ast)}$ induces a morphism 
  $$
  f_{(!)} \circ q_{(\ast)}\ra p_{(\ast)} \circ g_{(!)}$$
  of functors $\Sh(W) \ra \Sh(X).$
\end{Lemma}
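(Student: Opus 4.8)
The plan is to realize the desired morphism as an inclusion of subfunctors. Indeed $f_{(!)}\subset f_{(\ast)}$ gives $f_{(!)}\circ q_{(\ast)}\subset f_{(\ast)}\circ q_{(\ast)}$, and $g_{(!)}\subset g_{(\ast)}$ together with the left exactness of the right adjoint $p_{(\ast)}$ gives $p_{(\ast)}\circ g_{(!)}\subset p_{(\ast)}\circ g_{(\ast)}$; since $f_{(\ast)}\circ q_{(\ast)} = p_{(\ast)}\circ g_{(\ast)}$, it suffices to prove that for every sheaf $\mathcal{F}$ on $W$ and every open $U\subset X$ the subgroup $(f_{(!)}q_{(\ast)}\mathcal{F})(U)$ of $\mathcal{F}((f\circ q)^{-1}(U))$ is contained in the subgroup $(p_{(\ast)}g_{(!)}\mathcal{F})(U) = (g_{(!)}\mathcal{F})(p^{-1}(U))$ of $\mathcal{F}((p\circ g)^{-1}(U))$, where we use $(f\circ q)^{-1}(U) = (p\circ g)^{-1}(U)$. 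Such an inclusion is automatically compatible with the restriction maps of $\mathcal{F}$ and natural in $\mathcal{F}$, hence is the sought morphism of functors.

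To prove the inclusion, fix $\mathcal{F}$, $U$, and $s\in(f_{(!)}q_{(\ast)}\mathcal{F})(U)$, and let $s'\in\mathcal{F}((f\circ q)^{-1}(U))$ be the corresponding section. Set $T\pdef\supp s'$, a closed subset of $(f\circ q)^{-1}(U)$. By the definition of $f_{(!)}$ together with \ref{rem:supp-direct-image}, the closure $C$ of $q(T)$ in $f^{-1}(U)$ equals the support of $s$ viewed as a section of $q_{(\ast)}\mathcal{F}$ over $f^{-1}(U)$, and the induced map $f\colon C\ra U$ is proper. We must show that the induced map $g\colon T\ra p^{-1}(U)$ is proper; this will exactly say that $s'$, regarded as a section of $\mathcal{F}$ over $(p\circ g)^{-1}(U)$, lies in $(g_{(!)}\mathcal{F})(p^{-1}(U))$.

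Here the cartesian hypothesis enters: restricting the square along the open inclusion $U\hra X$ keeps it cartesian, so we may identify $(f\circ q)^{-1}(U)$ with $f^{-1}(U)\times_U p^{-1}(U)$, under which $q$ and $g$ become the two projections. Then $q^{-1}(C)$ is closed in $(f\circ q)^{-1}(U)$, by continuity of $q$ and closedness of $C$, and it contains $T$ since $q(T)\subset C$; hence $T$ is closed in $q^{-1}(C)$ and the inclusion $T\hra q^{-1}(C)$ is proper by \ref{VSU}. Moreover $q^{-1}(C) = C\times_U p^{-1}(U)$, and $g\colon q^{-1}(C)\ra p^{-1}(U)$ is the base change of the proper map $f\colon C\ra U$ along $p\colon p^{-1}(U)\ra U$, hence proper by \ref{rem:prop-BC}. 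Composing the two, \ref{comp-proper} shows that $g\colon T\ra p^{-1}(U)$ is proper, as required.

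I expect no serious obstacle here: the argument is bookkeeping with supports, the cartesian square, and the stability of properness under composition and base change (all recalled in Section~\ref{sec:reminders}). The one point needing a word of care is the identification of $C$ with the support of $s$ as a section of $q_{(\ast)}\mathcal{F}$, which is exactly what \ref{rem:supp-direct-image} provides — and for which the inclusion $q(\supp s')\subset\supp s$, all that is really used, is in any case immediate from the definition of $q_{(\ast)}$.
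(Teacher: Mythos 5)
Your proposal is correct and follows essentially the same route as the paper: both identify $(f_{(!)}q_{(\ast)}\mathcal{F})(U)$ with the sections $s'$ of $\mathcal{F}$ for which $f$ is proper on the closure of $q(\supp s')$, and then deduce properness of $g$ on $\supp s'$ by base-changing $f\colon \overline{q(\supp s')}\ra U$ along $p$ and composing with the closed embedding $\supp s'\hra q^{-1}(\overline{q(\supp s')})$, citing \ref{rem:supp-direct-image}, \ref{VSU}, \ref{rem:prop-BC}, and \ref{comp-proper} exactly as the paper does. The only cosmetic difference is that the paper first reduces to global sections ($U=X$) before running this argument, while you carry the open set $U$ along throughout.
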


\begin{proof}
  Let $\mathcal{G}$ be a sheaf on $W.$ It is enough to prove the
  claim on global sections, i.\,e.\ that the identity 
  $\Gamma f_{(\ast)} q_{(\ast)} \mathcal{G} = \Gamma
  p_{(\ast)}g_{(\ast)} \mathcal{G}$ induces a morphism 
  $\Gamma f_{(!)} q_{(\ast)} \mathcal{G} \ra \Gamma
  p_{(\ast)}g_{(!)} \mathcal{G}.$ Similarly as before, we interpret
  $\Gamma f_{(!)} 
  q_{(\ast)} 
  \mathcal{G}$ as the group of all $s \in \Gamma\mathcal{G}$ with
  $f \colon 
  \overline{q(\supp s)} \ra X$ proper.
  The cartesian diagram 
  $$\xymatrix{\kart
    q^{-1} (\overline{q (\supp s)}) \ar[r]\ar[d]_-{g}
    & \overline{q(\supp s)}\ar[d]^-{f}\\
    Z \ar[r] & X }$$
  and \ref{VSU}, \ref{rem:prop-BC} show that
  $g \colon  (\supp s) \ra Z$ is proper. Hence $s
  \in \Gamma p_{(\ast)} g_{(!)} \mathcal{G}$.
\end{proof}

\begin{Bemerkungl}\label{BaWWnc}
  Given a cartesian diagram $f\circ q=p\circ g$ of topological spaces as in
  \ref{cDi} 
  there is a canonical morphism
  $$p^{(\ast)} f_{(!)}  \ra g_{(!)} q^{(\ast)} $$
  of functors
  obtained as follows: the adjunction
  $(q^{(\ast)}, q_{(\ast)})$
  and \ref{l:BaWW1} provide morphisms 
  $f_{(!)} \ra f_{(!)}  q_{(\ast)}q^{(\ast)} \ra
  p_{(\ast)}g_{(!)}q^{(\ast)}$ so that we can use the adjunction
  $(p^{(\ast)}, p_{(\ast)}).$
\end{Bemerkungl}

\begin{theorem}[{\bf Proper base change}]
  \label{t:BaWe} 
  Let a cartesian diagram $f\circ q=p\circ g$ of topological spaces as in
  \ref{cDi}  be given.
  Assume that the vertical maps $f$ and $g$ are separated
  locally proper. Then the morphism constructed in 
  \ref{BaWWnc} is an isomorphism
  \begin{equation*}
    p^{(\ast)} \circ f_{(!)}  \sira
    g_{(!)} \circ q^{(\ast)}    
  \end{equation*}
  of functors $\Sh(Y) \ra \Sh(Z)$.
\end{theorem}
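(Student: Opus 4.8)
The plan is to check that the canonical morphism of \ref{BaWWnc} is an isomorphism stalkwise. Fix $z\in Z$ and put $x\pdef p(z)$. Because the square is cartesian, $q$ restricts to a homeomorphism $g^{-1}(z)\sira f^{-1}(x)$ under which $(q^{(\ast)}\mathcal{F})|_{g^{-1}(z)}$ becomes $\mathcal{F}|_{f^{-1}(x)}$. The heart of the matter is the following stalk formula: for $h\colon Y'\ra X'$ separated and locally proper and $x'\in X'$, restriction of a section to the fibre induces an isomorphism
  \begin{equation*}
    (h_{(!)}\mathcal{F})_{x'}\;\sira\;\Gamma_!\bigl(h^{-1}(x');\mathcal{F}|_{h^{-1}(x')}\bigr),\qquad\mathcal{F}\in\Sh(Y'),
  \end{equation*}
  natural in $\mathcal{F}$; here $h^{-1}(x')$ is locally compact Hausdorff, since the projection $h^{-1}(x')\ra\{x'\}$ is locally proper by \ref{l:BC-localprop} and separated by \ref{sep-BC}. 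Granting this, apply it to $f$ at $x$ and---since $g$ is again separated and locally proper by \ref{l:BC-localprop} and \ref{sep-BC}---to $g$ at $z$. The two targets are identified by the homeomorphism above, and unwinding the construction of \ref{BaWWnc} out of the unit of $(q^{(\ast)},q_{(\ast)})$, the map of \ref{l:BaWW1}, and the counit of $(p^{(\ast)},p_{(\ast)})$ shows that the stalk at $z$ of our morphism is compatible with the two restriction maps (one uses that restricting a sheaf to $g^{-1}(z)$ agrees with applying $q^{(\ast)}$, both being pullback along $q|_{g^{-1}(z)}$). Thus the morphism is a stalkwise isomorphism, hence an isomorphism.

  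It remains to prove the stalk formula. By definition $(h_{(!)}\mathcal{F})_{x'}=\varinjlim_U\{\,s\in\mathcal{F}(h^{-1}(U)):h\colon\supp s\ra U\text{ proper}\,\}$, the colimit over open neighbourhoods $U$ of $x'$, and the map to $\Gamma_!$ sends $s$ to $s|_{h^{-1}(x')}$; this lands in $\Gamma_!$ because $\supp s\cap h^{-1}(x')$ is a fibre of the proper map $\supp s\ra U$, hence compact. For injectivity: if $s|_{h^{-1}(x')}=0$ then $\supp s$ is disjoint from $h^{-1}(x')$; the proper map $\supp s\ra U$ is closed, so $h(\supp s)$ is closed in $U$ and misses $x'$, and replacing $U$ by $U\setminus h(\supp s)$ kills $s$ in the colimit.

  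Surjectivity is the main obstacle. Let $t\in\mathcal{F}(h^{-1}(x'))$ have compact support $K$. Using \ref{LEAm}, pick for each $y\in K$ a neighbourhood $A_y\ni y$ and an open neighbourhood $U_y\ni x'$ with $h\colon A_y\ra U_y$ proper; finitely many interiors $\operatorname{int}(A_{y_1}),\dots,\operatorname{int}(A_{y_n})$ cover $K$. Intersecting the $U_{y_i}$ to an open $U_0\ni x'$ and replacing each $A_{y_i}$ by $A_{y_i}\cap h^{-1}(U_0)$---still proper over $U_0$ by \ref{rem:prop-BC}---we may assume $A\pdef\bigcup A_{y_i}$ is a neighbourhood of $K$ proper over $U_0$. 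Then $A\cap h^{-1}(x')$ is compact, and the delicate point is to extend $t|_{A\cap h^{-1}(x')}$ to a section $\tilde t$ of $\mathcal{F}$ over $A\cap h^{-1}(U_1)$ for some open $U_1\ni x'$: one lifts $t$ to local sections on small opens of $Y'$ around the finitely many relevant points and glues them, using the Hausdorff property of the compact fibre and properness of the boxes $A_{y_i}$ to push the loci of disagreement off $h^{-1}(U_1)$ after shrinking $U_1$. Shrinking $U_1$ further so that $\supp\tilde t\subset\operatorname{int}_{Y'} A$, the set $\supp\tilde t$ is closed in $Y'$ and, as a closed subset of $A$ which is proper over $U_1$, is itself proper over $U_1$. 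Extending $\tilde t$ by zero off its support produces $s\in\mathcal{F}(h^{-1}(U_1))$ with $h\colon\supp s\ra U_1$ proper and $s|_{h^{-1}(x')}=t$, the required preimage.

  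In short, the only real work sits inside the stalk formula, and more precisely in its surjectivity: assembling the local proper neighbourhoods from \ref{LEAm} into one neighbourhood of $K$ proper over a small base, and extending $t$ off the fibre with support under control. The reduction to stalks, injectivity, and the compatibility of \ref{BaWWnc} with the fibre identifications are the routine bookkeeping anticipated in the introduction.
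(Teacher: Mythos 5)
Your proposal follows the paper's proof quite closely: reduction to stalks, the identification $(f_{(!)}\mathcal F)_x\sira\Gamma_!(f^{-1}(x);\mathcal F)$, injectivity by deleting the closed set $f(\supp s)$ from $U$, and surjectivity by assembling finitely many local proper neighbourhoods of $K$ into one set proper over a small base and then extending the fibre section with boundary control and extension by zero. The reordering (building the proper neighbourhood $A$ before extending the section, where the paper first extends to an open $C\supset K$ and only then finds $B\subset C$) is immaterial.

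The one genuine gap is the step you yourself flag as delicate: extending $t|_{A\cap h^{-1}(x')}$ to a section on a neighbourhood. This is precisely the content of the paper's Proposition \ref{p:FSK} (extension of sections on a compact relatively Hausdorff subset), and your sketched mechanism for it does not work as stated. If $t_i\in\mathcal F(V_i)$ are local lifts of $t$ around points of the compact fibre piece, the locus where $t_i$ and $t_j$ disagree is closed only in the open overlap $V_i\cap V_j$, not in $A$; hence properness of $A\ra U_0$ gives no control over its image in $U_0$, and $x'$ may lie in the closure of that image, so you cannot shrink $U_1$ to ``push the loci of disagreement off $h^{-1}(U_1)$''. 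The argument that does work is the one in the proof of \ref{p:FSK}: cover the compact set by compact pieces $K_i$ contained in it; two lifts agree on an open neighbourhood $W$ of the compact intersection $K_i\cap K_j$; and the relatively Hausdorff hypothesis (i.e.\ separatedness of $h$) provides \emph{disjoint} open neighbourhoods of $K_i\setminus W$ and $K_j\setminus W$ on which the two lifts are kept apart, so that the three restrictions glue; a finite induction finishes. Once this extension lemma is supplied, the rest of your argument closes up and agrees with the paper's.
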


\begin{Bemerkungl}
  If $f$ and $g$ are even separated and proper, 
  we obtain by \ref{p:sele} an
  isomorphism
  $p^{(\ast)} f_{(\ast)}  \sira g_{(\ast)} q^{(\ast)}$ of
  functors.
\end{Bemerkungl}

\begin{proof}
  We need to show that 
  our morphism of functors 
  evaluated at $\mathcal{F} \in \Sh(Y)$ 
  is an isomorphism
  $$p^{(\ast)} f_{(!)} \mathcal{F} \sira g_{(!)}
  q^{(\ast)} \mathcal{F}.$$ 
  A morphism of sheaves is an isomorphism if and only if it
  induces isomorphisms on all stalks. Hence we can assume without
  loss of generality that
  $Z$ consists of a single point $x \in X.$
  In this case the theorem claims that the obvious map
  is an isomorphism
  $$(f_{(!)}\mathcal{F})_{x} \sira \Gamma_{!}
  (f^{-1}(x); \mathcal{F}).$$

  Injectivity: 
  Let $U \co X$ be an open neighborhood of $x$ and $s
  \in \mathcal{F}(f^{-1}(U))$ a section with $(\supp s) \ra
  U$ proper but $s|_{f^{-1}(x)} =0$. Then $x$ is not in the
  closed subset $f(\supp s) \subset
  U$. If $V\co U$ is the open complement of this subset,
  the restriction of our section $s \in (f_{(!)}
  \mathcal{F})(U)$
  to $V$ alias $f^{-1}(V)$ is zero. 

  Surjectivity: 
  Let $s \in \Gamma_{!} (f^{-1}(x); \mathcal{F}).$
  Denote by $\bar{\mathcal{F}}$ the \'etale space over $Y$
  associated to $\mathcal{F}.$ 
  We view $s$ as a continuous section
  $s\colon f^{-1}(x) \ra \bar{\mathcal{F}}$
  with compact support $K:=\supp s \subset f^{-1}(x).$ 
  Since $f$ is separated, \ref{SRH} and
  \ref{p:FSK} allow us to 
  extend $s|_K$ to a  
  continuous section $\hat{s}\colon C \ra \bar{\mathcal{F}}$
  where $C \co Y$ is an open neighborhood of $K.$
  By shrinking $C$ we can assume in addition that 
  $\hat{s}$ and $s$ coincide on $C \cap f^{-1}(x)$
  (the subset of $C \cap f^{-1}(x)$ where the restrictions of $s$
  and $\hat{s}$ coincide is open in $C \cap
  f^{-1}(x)$ and contains $K,$ hence is of the form $C' \cap C
  \cap f^{-1}(x)$ for some open subset $K \subset C' \co Y$;
  replace $C$ by 
  $C'$).
  Hence $s$ and $\hat{s}$ glue to a continuous section 
  $C \cup f^{-1}(x) \ra \bar{\mathcal{F}}.$

  We claim that there is a subset $B \subset Y$ and an open
  subset $U \co X$
  with
  $K \subset B^{\circ} \subset B \subset C$
  and $f(B) \subset U$ such that $f\colon B \ra U$ is proper.

  Indeed, since $f$ is locally proper, any $y \in K$ has a
  neighborhood $A_y$ in $Y$ contained 
  in $C$ such that there is an open neighborhood  
  $U_{y} \co X$ of $f(y)=x$ with $f(A_y) \subset U_y$ and 
  $f\colon A_{y} \ra U_{y}$ proper.
  Since $f$ is separated, $A_y \subset f^{-1}(U_y)$ is a closed
  subset by 
  \ref{VSU},
  \ref{sep-BC},
  \ref{l:SaA}. 
  Finitely many $A_{y}^\circ$ cover $K,$ and we
  define $U$ as the intersection of the corresponding sets
  $U_{y}$ and $B$ as the union of the corresponding sets
  $A_{y}$ intersected with $f^{-1}(U)$.
  Then $f \colon B \ra U$ is proper by \ref{VUAa}, proving the
  claim.
  
  Certainly $B^{\circ}\co f^{-1}(U)$ is an open subset, and $B
  \subset 
  f^{-1} (U)$ is closed by \ref{l:SaA}. 
  Let
  $\partial B = B \setminus B^{\circ}$ 
  be the boundary of $B$ with respect to $f^{-1}(U).$
  Since $(\supp \hat{s})$ is closed in $C,$ the intersection 
  $(\supp \hat{s}) \cap \partial B$ is closed in $B$ and hence 
  the map $f\colon (\supp \hat{s}) \cap \partial B \ra U$ is
  proper. Its image does not contain $x$ because
  $f^{-1}(x) \cap (\supp \hat{s}) \cap \partial B = K
  \cap \partial B = \emptyset.$
  By replacing $U$ with its open subset $U \setminus
  f((\supp \hat{s}) \cap \partial B)$ we can assume
  that 
  $(\supp \hat{s}) \cap \partial B =\emptyset.$
  Hence the continuous map $\hat{s}|_B\colon  B \ra \bar{\mathcal{F}}$
  and the zero section $f^{-1}(U)\setminus B^\circ \ra
  \bar{\mathcal{F}}$ coincide on $\partial B$ and glue to a
  continuous map $f^{-1}(U) \ra \bar{\mathcal{F}}$
  alias an element of $\mathcal{F}(f^{-1}(U))$ whose support is
  equal to $(\supp \hat{s}) \cap B$ and hence proper over $U.$
  We deduce surjectivity.
\end{proof}


\begin{Lemma}
  \label{l:VTVT}
  If $f\colon  Y \ra X$ is a separated  locally proper map,
  the proper direct image functor $f_{(!)}\colon \Sh(Y) \ra
  \Sh(X)$ commutes with 
  filtered co\-limits.
\end{Lemma}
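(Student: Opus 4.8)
The plan is to check the statement stalkwise, reducing to a statement about sections with compact support, and then to exploit the proper base change isomorphism just proved. Since filtered colimits of sheaves are computed as the sheafification of the presheaf colimit, and since the stalk functor commutes with filtered colimits, it suffices to show that for every $x\in X$ the natural map
\begin{equation*}
  \varinjlim_i (f_{(!)}\mathcal F_i)_x \ra (f_{(!)}\varinjlim_i \mathcal F_i)_x
\end{equation*}
is an isomorphism, where $(\mathcal F_i)_{i\in I}$ is a filtered system in $\Sh(Y)$. By proper base change \ref{t:BaWe} applied to the inclusion of the point $x$ into $X$, the right-hand side is identified with $\Gamma_!(f^{-1}(x);\varinjlim_i\mathcal F_i)$, and, since $\Gamma_!(f^{-1}(x);-)$ is the value at $x$ of the proper direct image along $f$ followed by restriction, the left-hand side is $\varinjlim_i \Gamma_!(f^{-1}(x);\mathcal F_i)$. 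So the lemma reduces to the following assertion: if $g\colon T\ra\{\mathrm{pt}\}$ is the constant map from a locally compact Hausdorff space $T$ (here $T=f^{-1}(x)$, which is locally compact Hausdorff because $f$ is separated locally proper, by \ref{l:BC-localprop} and the characterization of locally proper maps to a point), then $\Gamma_!(T;-)$ commutes with filtered colimits.

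For this last assertion I would argue directly with étale spaces. Let $s\in\Gamma_!(T;\varinjlim_i\mathcal F_i)$, with compact support $K\subset T$. For \emph{surjectivity} of $\varinjlim_i\Gamma_!(T;\mathcal F_i)\ra\Gamma_!(T;\varinjlim_i\mathcal F_i)$: each point of $K$ has a compact neighborhood on which $s$, being a section of the colimit sheaf, is represented by a section of some $\mathcal F_i$; by compactness of $K$ finitely many such neighborhoods suffice, and by filteredness of $I$ one can pass to a common index $i$; on the overlaps the two representatives of $s$ become equal in the colimit, hence (again by compactness, now of the pairwise intersections of these compacta, and filteredness) equal after increasing $i$; the resulting section of $\mathcal F_i$ over a neighborhood of $K$ can be extended by zero outside a slightly smaller compact neighborhood of $K$ using a Hausdorff separation argument, producing an element of $\Gamma_!(T;\mathcal F_i)$ mapping to $s$. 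For \emph{injectivity}: if $t\in\Gamma_!(T;\mathcal F_i)$ maps to $0$ in $\Gamma_!(T;\varinjlim_i\mathcal F_i)$, then its germ vanishes at each point of the compact set $\supp t$, hence $t$ restricted to some open neighborhood of each such point already vanishes in some $\mathcal F_j$; by compactness and filteredness, $t$ itself maps to $0$ in $\Gamma_!(T;\mathcal F_j)$ for a suitable $j\geq i$.

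The main obstacle is the bookkeeping in the surjectivity step: one must simultaneously (i) cover the compact support $K$ by finitely many compacta over which the section lifts, (ii) use filteredness to reconcile the lifts on overlaps after possibly enlarging the index, and (iii) perform the extension-by-zero using local compactness and the Hausdorff property to find a compact neighborhood $K'$ of $K$ with $K'\setminus K^{\circ}$ not meeting the support, exactly as in the surjectivity part of the proof of \ref{t:BaWe}. None of these steps is deep, but care is needed to keep the finitely many index-increases compatible. Everything else — the reduction to a point, and the identification of the relevant colimit with $\Gamma_!$ of a colimit — is formal, using only \ref{t:BaWe} and the fact that stalks and sheafification commute with filtered colimits.
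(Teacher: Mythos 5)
Your proposal is correct and follows essentially the same route as the paper: reduce via proper base change \ref{t:BaWe} to the case where the base is a point, where the fiber is locally compact Hausdorff, and then show that $\Gamma_!$ on such a space commutes with filtered colimits. The paper simply outsources this last step to Lemma~\ref{l:VTDLa}, whose proof is the same injectivity/surjectivity argument (cover the compact support, pass to a common index, extend by zero) that you sketch.
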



\begin{proof}
  Let $\mathcal{F} \colon  I \ra \Sh(Y)$ be a filtered diagram.
  We claim that the obvious morphism is an isomorphism
  \begin{displaymath}
    \varinjlim \left( f_{(!)} \mathcal{F}_i\right)
    \sira
    f_{(!)} \left(\varinjlim
      \mathcal{F}_i\right).
  \end{displaymath}
  Since the inverse image functor of sheaves commutes with
  arbitrary colimits, proper base change
  \ref{t:BaWe} allows to reduce to the case that $X$ consists of a
  single point. Then $Y$ is a locally compact Hausdorff space and
  the claim is well known and recalled in \ref{l:VTDLa}.
\end{proof}

\section{Derived proper direct image}
\label{sec:derived-proper-image}

\begin{Definition}
  \label{d:rel-c-soft}
  Given a continuous map $f\colon Y\ra X,$ a sheaf on $Y$
  is called
  \textbf{$f$-c-soft} 
  if its restriction to every fiber of $f$ is c-soft
  in the sense of \ref{d:cpt-coft}. 
\end{Definition}

\begin{Bemerkungl}
  This definition seems 
  to be useful just for separated locally
  proper maps.
\end{Bemerkungl}

\begin{Bemerkungl}
  \label{ikww}
  If our map $f\colon Y \ra X$ is separated, every flabby sheaf
  and in particular every injective sheaf on $Y$ is $f$-c-soft by
  \ref{p:FSK}. 
\end{Bemerkungl}

\begin{Bemerkungl}
  \label{dSK}
  Given a separated locally proper map
  $f\colon Y \ra X,$
  arbitrary direct sums alias coproducts,
  and even filtered colimits
  of $f$-c-soft sheaves
  are $f$-c-soft again.
  In fact, the inverse image functor of sheaves commutes with
  arbitrary 
  colimits, and every filtered colimit of c-soft sheaves on a
  locally compact Hausdorff space is c-soft by
  \ref{l:LKWG}.
\end{Bemerkungl}

\begin{Lemma}
  \label{l:teL}
  Let 
  $f\colon Y \ra X$
  be a separated locally proper map.
  Then any $f$-c-soft sheaf on $Y$ is
  $f_{(!)}$-acyclic.
\end{Lemma}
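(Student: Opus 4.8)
The plan is to show that the full subcategory of $f$-c-soft sheaves is adapted to the left exact functor $f_{(!)}$, so that its objects become $f_{(!)}$-acyclic by the usual homological algebra criterion. Concretely, it suffices to check three things: (i) every sheaf on $Y$ embeds into an $f$-c-soft sheaf; (ii) whenever $0 \to \mathcal{F}' \to \mathcal{F} \to \mathcal{F}'' \to 0$ is exact in $\Sh(Y)$ with $\mathcal{F}'$ and $\mathcal{F}$ being $f$-c-soft, then $\mathcal{F}''$ is $f$-c-soft; and (iii) for such a sequence the induced sequence $0 \to f_{(!)}\mathcal{F}' \to f_{(!)}\mathcal{F} \to f_{(!)}\mathcal{F}'' \to 0$ is again exact. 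Property (i) is immediate: $\Sh(Y)$ has enough injectives, and since $f$ is separated every injective sheaf is $f$-c-soft by \ref{ikww}.

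For (ii) and (iii) I would reduce to the classical theory of c-soft sheaves on locally compact Hausdorff spaces by restricting to a fibre. Fix $x\in X$ and put $Y_x=f^{-1}(x)$. The constant map $Y_x\to\{x\}$ is obtained from $f$ by the base change $\{x\}\hra X$, so it is separated and locally proper by \ref{sep-BC} and \ref{l:BC-localprop}, and hence $Y_x$ is a locally compact Hausdorff space. Restriction of sheaves along the embedding $Y_x\hra Y$ is an inverse image functor, hence exact, so restricting the short exact sequence above to $Y_x$ keeps it exact, with $\mathcal{F}'|_{Y_x}$ and $\mathcal{F}|_{Y_x}$ c-soft; the classical fact that a cokernel of an embedding of c-soft sheaves into a c-soft sheaf on a locally compact Hausdorff space is again c-soft (see e.g.\ \cite{KS}) then shows $\mathcal{F}''|_{Y_x}$ is c-soft, and as $x$ was arbitrary this proves (ii). For (iii), left exactness of $f_{(!)}$ is known, and surjectivity of $f_{(!)}\mathcal{F}\to f_{(!)}\mathcal{F}''$ may be checked on stalks. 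Underived proper base change \ref{t:BaWe}, applied to the cartesian square obtained from $f$ by the base change $\{x\}\hra X$, identifies the stalk at $x$ of $f_{(!)}\mathcal{H}$ canonically and naturally in $\mathcal{H}$ with $\Gamma_!(Y_x;\mathcal{H}|_{Y_x})$. So (iii) reduces to surjectivity of $\Gamma_!(Y_x;\mathcal{F}|_{Y_x})\to\Gamma_!(Y_x;\mathcal{F}''|_{Y_x})$, which follows from the long exact sequence of $\Gamma_!$ together with the classical fact that c-soft sheaves on locally compact Hausdorff spaces are $\Gamma_!$-acyclic, applied to the c-soft sheaf $\mathcal{F}'|_{Y_x}$.

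Granting (i)--(iii), the lemma follows by a standard dimension shift. Given an $f$-c-soft sheaf $\mathcal{F}$, choose an injective resolution $0\to\mathcal{F}\to\mathcal{I}^0\to\mathcal{I}^1\to\cdots$; each $\mathcal{I}^j$ is $f$-c-soft by \ref{ikww}, and the cocycle sheaves $\mathcal{Z}^j=\ker(\mathcal{I}^j\to\mathcal{I}^{j+1})$ are $f$-c-soft by induction on $j$, applying (ii) to $0\to\mathcal{Z}^j\to\mathcal{I}^j\to\mathcal{Z}^{j+1}\to 0$ (with $\mathcal{Z}^0=\mathcal{F}$). By (iii) each of these short exact sequences remains exact after applying $f_{(!)}$, which formally forces the complex $f_{(!)}\mathcal{I}^\bullet$ to have cohomology $f_{(!)}\mathcal{F}$ in degree $0$ and to be acyclic in positive degrees; hence $R^q f_{(!)}\mathcal{F}=0$ for all $q>0$, i.e.\ $\mathcal{F}$ is $f_{(!)}$-acyclic. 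The only genuine work is in the second paragraph: one must make sure the fibrewise reduction is legitimate — that restriction to fibres is exact and compatible with the identification of stalks furnished by \ref{t:BaWe} — and then quote precisely the classical statements about c-soft sheaves on locally compact Hausdorff spaces; the rest is formal.
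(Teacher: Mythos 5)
Your argument is correct and uses the same essential mechanism as the paper: identify the stalk of $f_{(!)}$ at $x\in X$ with $\Gamma_!$ on the locally compact Hausdorff fibre $f^{-1}(x)$ via proper base change \ref{t:BaWe}, and then invoke the classical facts about c-soft sheaves \cite[Prop.~2.5.8, Cor.~2.5.9]{KS}. The paper packages this more briefly --- it takes an injective resolution $\mathcal{F}\hra\mathcal{I}^\ast$, whose terms are $f$-c-soft by \ref{ikww}, and checks stalkwise that $f_{(!)}$ preserves its exactness, so your step (ii) and the concluding dimension shift are not strictly needed --- but your adapted-class formulation is equally valid.
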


\begin{proof}
  Let $\mathcal{F} \in \Sh(Y)$ be $f$-c-soft and $\mathcal{F} \hra
  \mathcal{I}^\ast$ an injective resolution.
  We need to show that
  $f_{(!)}\mathcal{F}\hra f_{(!)}\mathcal{I}^\ast$ is exact.
  We test this on the stalks at an arbitrary $x \in X.$
  Let $j$ be the inclusion $f^{-1}(x) \hra Y.$ By proper base
  change~\ref{t:BaWe} it is enough to show exactness of
  $\Gamma_!(j^{(\ast)}\mathcal{F})\hra
  \Gamma_!(j^{(\ast)}\mathcal{I}^\ast).$ But this follows from 
  \ref{ikww} and
  \cite[Prop.~2.5.8, Cor.~2.5.9]{KS}.
\end{proof}

\begin{Bemerkungl}\label{BKWe}
  Let $g \colon Z \rightarrow Y$ be a continuous map of locally
  compact Hausdorff spaces.
  If $\mathcal{F} \in \Sh(Z)$ is c-soft so is
  $g_{(!)} \mathcal{F} \in \Sh(Y)$ by \cite[Prop. 2.5.7]{KS}.
\end{Bemerkungl}

\begin{Lemma}
  \label{l:EDKW}
  Let $g \colon Z \rightarrow Y$ and $f \colon Y \rightarrow X$
  be separated locally proper maps.  If $\mathcal{F} \in \Sh(Z)$
  is an $(f \circ g)$-c-soft sheaf on $Z$, its proper direct image
  $g_{(!)}\mathcal{F}$ is an $f$-c-soft sheaf on $Y$.
\end{Lemma}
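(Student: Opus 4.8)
The plan is to verify $f$‑c‑softness of $g_{(!)}\mathcal{F}$ one fiber of $f$ at a time, reducing on each fiber — via proper base change — to the classical statement \ref{BKWe} that $h_{(!)}$ preserves c‑softness for a continuous map $h$ of locally compact Hausdorff spaces. So fix $y \in Y$ and put $Y_y := f^{-1}(y)$ and $Z_y := (f \circ g)^{-1}(y) = g^{-1}(Y_y)$; write $g_y\colon Z_y \ra Y_y$ for the map induced by $g$ and $i\colon Y_y \hra Y$, $j\colon Z_y \hra Z$ for the inclusions. The square with these four maps is cartesian, and since $g$ is separated locally proper its base change $g_y$ is again separated locally proper by \ref{l:BC-localprop} and \ref{sep-BC}. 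Hence proper base change \ref{t:BaWe}, applied to this square and evaluated at $\mathcal{F}$, yields an isomorphism
\begin{equation*}
  (g_{(!)}\mathcal{F})|_{Y_y} \;=\; i^{(\ast)} g_{(!)} \mathcal{F} \;\sira\; (g_y)_{(!)}\, j^{(\ast)} \mathcal{F} \;=\; (g_y)_{(!)}\big(\mathcal{F}|_{Z_y}\big),
\end{equation*}
so it remains to see that $(g_y)_{(!)}\big(\mathcal{F}|_{Z_y}\big)$ is a c‑soft sheaf on $Y_y$.

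For this I would first check that the relevant spaces are locally compact Hausdorff. Pulling the separated locally proper map $f$ back along the inclusion $\{y\} \hra X$ gives the map $Y_y \ra \{y\}$, which is therefore separated and locally proper by \ref{sep-BC} and \ref{l:BC-localprop}; a map to a one‑point space is locally proper precisely when its source is locally compact, and separated precisely when its source is Hausdorff, so $Y_y$ is locally compact Hausdorff. Since $f \circ g$ is locally proper by \ref{rem:comp-locprop} and separated as a composition of separated maps (see Section~\ref{sec:reminders}), the same reasoning applied to $f \circ g$ shows that its fiber $Z_y$ is locally compact Hausdorff. Moreover $\mathcal{F}$ is $(f \circ g)$‑c‑soft and $Z_y$ is a fiber of $f \circ g$, so $\mathcal{F}|_{Z_y}$ is c‑soft. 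Thus $g_y\colon Z_y \ra Y_y$ is a continuous map of locally compact Hausdorff spaces, and \ref{BKWe} applied to the c‑soft sheaf $\mathcal{F}|_{Z_y}$ gives that $(g_y)_{(!)}\big(\mathcal{F}|_{Z_y}\big)$ is c‑soft. By the displayed isomorphism $(g_{(!)}\mathcal{F})|_{Y_y}$ is c‑soft, and as $y \in Y$ was arbitrary we conclude that $g_{(!)}\mathcal{F}$ is $f$‑c‑soft.

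The argument is essentially bookkeeping: identifying the restriction of $g_{(!)}\mathcal{F}$ to a fiber of $f$ with a proper direct image computed fiberwise. The only step that is not purely formal is making sure that the map $g_y$ induced on fibers is again separated locally proper, so that proper base change \ref{t:BaWe} genuinely applies — but this is immediate from the stability of both properties under base change, so I anticipate no real obstacle.
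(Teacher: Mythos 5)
Your proof is correct and follows essentially the same route as the paper's: restrict to the fiber of $f$ over a point of $X$, use proper base change \ref{t:BaWe} on the cartesian square of fiber inclusions to identify $(g_{(!)}\mathcal{F})|_{f^{-1}(x)}$ with $(g_x)_{(!)}\bigl(\mathcal{F}|_{(f\circ g)^{-1}(x)}\bigr)$, and invoke \ref{BKWe} on the locally compact Hausdorff fibers. The only blemish is notational: the point you fix must lie in $X$, not in $Y$ (you write ``fix $y\in Y$'' but then correctly use the inclusion $\{y\}\hra X$), since $f$-c-softness is tested on the fibers $f^{-1}(x)$ for $x\in X$.
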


\begin{proof}
  Given a point $x \in X$ consider the following diagram with
  two cartesian squares.
  \begin{displaymath}
    \xymatrix{
      g^{-1} (f^{-1} (x))\ar[d]_-{v} \ar@{^{(}->}[r]^-{k} & Z\ar[d]^-g\\
      f^{-1}(x)\ar[d]_-{u} \ar@{^{(}->}[r]^-{j} &Y\ar[d]^f \\
      {\{x\}} \ar@{^{(}->}[r]^-{i} & X
    }
  \end{displaymath}
  Proper base change~\ref{t:BaWe} shows $j^{(\ast)} g_{(!)}
  \mathcal{F} \cong v_{(!)} k^{(\ast)} \mathcal{F}$. 
  By assumption, the sheaf $k^{(\ast)} \mathcal{F}$ is c-soft
  and both $g^{-1} (f^{-1} (x))$ and $f^{-1} (x)$ are locally compact
  Hausdorff spaces. Hence \ref{BKWe} shows that $v_{(!)}
  k^{(\ast)} \mathcal{F} \cong j^{(\ast)} g_{(!)} 
  \mathcal{F}$ is c-soft. This proves the claim.
\end{proof}

\begin{Definition}
  If $f \colon Y \rightarrow X$ is a continuous map we denote the
  right derived functor $ {\op{R}} f_{(!)} \colon
  D^+(Y) \rightarrow D^+(X) $ of the left exact proper direct
  image functor $f_{(!)} \colon \Sh(Y) \rightarrow \Sh(X)$ by
  $
    f_{!} \pdef {\op{R}}f_{(!)}.
  $
\end{Definition}

\begin{theorem}[{\bf Iterated derived proper images}]
  \label{t:iterated-derived-proper-image}
  Let 
  $g \colon  Z \rightarrow Y$ and $f \colon  Y \rightarrow X$
  be separated locally proper maps.
  Then the identity 
  $f_{(!)} \circ g_{(!)} = (f \circ g)_{(!)}$ from \ref{t:DBE}
  gives rise to an isomorphism 
  \begin{equation*}
    (f \circ g)_! \sira f_! \circ g_!
  \end{equation*}
  of triangulated functors $D^+(Z) \ra D^+(X)$.
\end{theorem}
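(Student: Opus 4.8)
The plan is to deduce this from the standard principle on the composition of right derived functors. Recall that if $F\colon\mathcal{A}\ra\mathcal{B}$ and $G\colon\mathcal{B}\ra\mathcal{C}$ are left exact functors between abelian categories with enough injectives and $F$ carries injective objects to $G$-acyclic objects, then injective objects of $\mathcal{A}$ are $(G\circ F)$-acyclic (e.g.\ by the Grothendieck spectral sequence) and the canonical comparison morphism $\mathrm{R}(G\circ F)\ra\mathrm{R}G\circ\mathrm{R}F$ of functors $D^+(\mathcal{A})\ra D^+(\mathcal{C})$ is an isomorphism of triangulated functors. I would apply this with $\mathcal{A}=\Sh(Z)$, $\mathcal{B}=\Sh(Y)$, $\mathcal{C}=\Sh(X)$, $F=g_{(!)}$ and $G=f_{(!)}$; here $G\circ F=(f\circ g)_{(!)}$ by \ref{t:DBE} (note that $f\circ g$ is locally proper by \ref{rem:comp-locprop} and separated, being a composition of separated maps). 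Since $g_!=\mathrm{R}g_{(!)}$, $f_!=\mathrm{R}f_{(!)}$ and $(f\circ g)_!=\mathrm{R}(f\circ g)_{(!)}$, this yields the desired isomorphism $(f\circ g)_!\sira f_!\circ g_!$.

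The only hypothesis that has to be verified is that $g_{(!)}$ sends injective sheaves on $Z$ to $f_{(!)}$-acyclic sheaves on $Y$. So let $\mathcal{I}$ be an injective sheaf on $Z$. Since $f\circ g$ is separated, $\mathcal{I}$ is $(f\circ g)$-c-soft by \ref{ikww}. Then \ref{l:EDKW}, applied to the separated locally proper maps $g$ and $f$, shows that $g_{(!)}\mathcal{I}$ is $f$-c-soft, and \ref{l:teL} then shows that $g_{(!)}\mathcal{I}$ is $f_{(!)}$-acyclic, which is exactly what is needed.

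Finally one should check that the isomorphism produced in this way is the one induced by the identity $f_{(!)}\circ g_{(!)}=(f\circ g)_{(!)}$ of \ref{t:DBE}, in the sense that on an object of $D^+(Z)$ represented by a bounded below complex of injectives $\mathcal{I}^\bullet$ both sides are computed by the complex $f_{(!)}g_{(!)}\mathcal{I}^\bullet$ and the comparison map is the identity; this is built into the construction of the canonical morphism $\mathrm{R}(G\circ F)\ra\mathrm{R}G\circ\mathrm{R}F$, using that $g_{(!)}\mathcal{I}^\bullet$ is a bounded below complex of $f_{(!)}$-acyclic sheaves and therefore computes $f_!$ applied to $g_!\mathcal{I}^\bullet$. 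I do not expect any real obstacle here: the substantive work has already been carried out in \ref{l:teL}, \ref{ikww} and especially \ref{l:EDKW}, and the present theorem is a formal consequence; the only mildly delicate point is to confirm that the abstract comparison morphism is natural and compatible with the triangulated structure, which is routine homological algebra.
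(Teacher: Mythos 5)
Your proposal is correct and follows essentially the same route as the paper: reduce to showing that $g_{(!)}$ sends injective sheaves to $f_{(!)}$-acyclic sheaves, and establish this via the chain \ref{ikww}, \ref{l:EDKW}, \ref{l:teL}. The extra remarks on compatibility with the identity from \ref{t:DBE} and on $f\circ g$ being separated locally proper are sound but routine, and the paper leaves them implicit.
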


\begin{proof}
  It is enough to show that the functor $g_{(!)}$ maps injective
  sheaves to $f_{(!)}$-acyclic sheaves.
  Every injective sheaf $\mathcal{I} \in \Sh(Z)$ is
  $(f \circ g)$-c-soft by \ref{ikww}.
  Then $g_{(!)} \mathcal{I} \in \Sh(Y)$ is $f$-c-soft
  by \ref{l:EDKW} and hence $f_{(!)}$-acyclic by \ref{l:teL}.
\end{proof}

\begin{theorem}[{\bf Derived proper base change}]
  \label{t:DeBaW}
  Let a cartesian diagram $f\circ q=p\circ g$ of topological spaces as in
  \ref{cDi} be given.
  Assume that the vertical maps $f$ and $g$ are separated
  and locally proper.
  Then, in the space of triangulated functors
  $D^+(Y) \rightarrow D^+(Z)$, 
  the obvious morphisms are isomorphisms
  $\op{R}(p^{(\ast)} \circ f_{(!)})
  \sira p^\ast \circ f_!$
  and $\op{R}(g_{(!)} \circ q^{(\ast)}) \sira g_!
  \circ q^\ast$  so that 
  proper base change~\ref{t:BaWe} yields an isomorphism
  \begin{equation*}
    p^\ast \circ f_! \sira g_! \circ q^\ast.
  \end{equation*}
\end{theorem}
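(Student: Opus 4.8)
The plan is to deduce both displayed isomorphisms from the standard ``derived functor of a composite'' formalism, feeding in the acyclicity results already at our disposal, and then to combine them with the underived proper base change~\ref{t:BaWe}. Throughout we use that the inverse image functors $p^{(\ast)}$ and $q^{(\ast)}$ are exact, so that $p^\ast=p^{(\ast)}$, $q^\ast=q^{(\ast)}$ and these functors need no deriving.

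First I would check the isomorphism $\op{R}(p^{(\ast)}\circ f_{(!)})\sira p^\ast\circ f_!$. Since $p^{(\ast)}$ is exact, every sheaf on $X$ is $p^{(\ast)}$-acyclic, so a fortiori $f_{(!)}$ sends injective sheaves on $Y$ to $p^{(\ast)}$-acyclic sheaves; hence the canonical comparison morphism $\op{R}(p^{(\ast)}\circ f_{(!)})\to \op{R}p^{(\ast)}\circ \op{R}f_{(!)} = p^{(\ast)}\circ f_! = p^\ast\circ f_!$ is an isomorphism. Concretely, an injective resolution $\mathcal{F}\hra \mathcal{I}^\bullet$ in $\Sh(Y)$ computes both sides as $p^{(\ast)}f_{(!)}\mathcal{I}^\bullet$.

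Next I would treat $\op{R}(g_{(!)}\circ q^{(\ast)})\sira g_!\circ q^\ast$. By the same formalism it suffices to see that $q^{(\ast)}$ carries injective sheaves on $Y$ to $g_{(!)}$-acyclic sheaves on $W$, and by \ref{l:teL} it is enough to see that $q^{(\ast)}$ carries them to $g$-c-soft sheaves. The key observation is that $q^{(\ast)}$ sends $f$-c-soft sheaves to $g$-c-soft sheaves: fixing $z\in Z$, the cartesian square identifies, after restricting $q$, the fibre inclusion $g^{-1}(z)\hra W$ with a homeomorphism $g^{-1}(z)\sira f^{-1}(p(z))$ (inverse $y\mapsto (z,y)$), so the restriction of $q^{(\ast)}\mathcal{G}$ to $g^{-1}(z)$ is the pullback along this homeomorphism of $\mathcal{G}|_{f^{-1}(p(z))}$, and c-softness is a topological property invariant under homeomorphisms. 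Since injective sheaves on $Y$ are $f$-c-soft by \ref{ikww}, this gives the claim, whence $\op{R}(g_{(!)}\circ q^{(\ast)}) = \op{R}g_{(!)}\circ q^{(\ast)} = g_!\circ q^\ast$.

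Finally, proper base change~\ref{t:BaWe} is an isomorphism of functors $p^{(\ast)}\circ f_{(!)}\sira g_{(!)}\circ q^{(\ast)}$ from $\Sh(Y)$ to $\Sh(Z)$; applying $\op{R}(-)$ and substituting the two identifications above produces the asserted isomorphism $p^\ast\circ f_!\sira g_!\circ q^\ast$ of triangulated functors $D^+(Y)\to D^+(Z)$ (and one checks the comparison morphisms are natural enough to make sense in the ``space of triangulated functors''). I expect the one step that is more than bookkeeping to be the preservation of relative c-softness under $q^{(\ast)}$ — i.e.\ that base change does not destroy $f$-c-softness; everything else is the routine composite-of-derived-functors argument combined with \ref{l:teL} and \ref{ikww}.
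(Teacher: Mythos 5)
Your proof is correct and follows essentially the same route as the paper: injective sheaves on $Y$ are $f$-c-soft by \ref{ikww}, their pullbacks under $q^{(\ast)}$ are $g$-c-soft because $q$ restricts to a homeomorphism $g^{-1}(z)\sira f^{-1}(p(z))$ on fibers, hence are $g_{(!)}$-acyclic by \ref{l:teL}; the other comparison morphism is an isomorphism by exactness of $p^{(\ast)}$. You in fact spell out the fiber-homeomorphism step that the paper leaves implicit.
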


\begin{proof}
  If $\mathcal{I}$ is an injective sheaf on $Y,$
  it is $f$-c-soft by \ref{ikww},
  its inverse
  image $q^{(\ast)}\mathcal{I}$ on $W$ is $g$-c-soft and hence 
  $g_{(!)}$-acyclic by \ref{l:teL}.
  This  shows that
  $\op{R}(g_{(!)} \circ
  q^{(\ast)}) \ra g_! \circ q^\ast$ is an isomorphism.
  The rest of the proof is obvious.
\end{proof}
\section{Projection formula}\label{PF} 

\begin{Bemerkungl}
  \label{NatT}
  Let
  $f \colon Y \rightarrow X$ 
  be a continuous map and $\mathcal{F} \in \Sh(Y)$
  and $\mathcal{G} \in \Sh(X).$
  Then we obtain a canonical morphism
  \begin{equation*}
    (f_{(\ast)} \mathcal{F}) \otimes \mathcal{G} \rightarrow 
    f_{(\ast)} (\mathcal{F} \otimes f^{(\ast)} \mathcal{G})
  \end{equation*}
  of sheaves  by adjunction from the composition
  $
  f^{(\ast)} ( f_{(\ast)}\mathcal{F} \otimes \mathcal{G}) \sira
  f^{(\ast)}f_{(\ast)} \mathcal{F} \otimes f^{(\ast)} \mathcal{G}\ra 
  \mathcal{F} \otimes f^{(\ast)} \mathcal{G}
  $, where the first morphism is  the usual
  isomorphism encoding the compatibility of tensor product and
  inverse image and the second morphism comes from the
  adjunction.
  This morphism obviously induces a morphism
  \begin{equation*}
    (f_{(!)} \mathcal{F}) \otimes \mathcal{G} \rightarrow 
    f_{(!)} (\mathcal{F} \otimes f^{(\ast)} \mathcal{G})
  \end{equation*}
  on the level of proper direct images.
\end{Bemerkungl}

\begin{theorem}[{\bf Projection formula}]
  \label{t:ProFor} 
  Let $f\colon  Y \ra X$ be a separated  locally proper map
  and  $\mathcal{F}\in\Sh(Y)$ and
  $\mathcal{G}\in\Sh(X).$ 
  If $\mathcal{G}$ is flat the morphism from \ref{NatT} is an
  isomorphism
  \begin{equation*}
    (f_{(!)} \mathcal{F}) \otimes \mathcal{G} \sira
    f_{(!)} (\mathcal{F} \otimes f^{(\ast)} \mathcal{G}).
  \end{equation*}
 If $\mathcal{F}$ is in addition assumed to 
  be $f$-c-soft, the same is true for
  $\mathcal{F} \otimes  f^{(\ast)}\mathcal{G}$. 
\end{theorem}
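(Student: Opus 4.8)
The plan is to check the morphism from \ref{NatT} on stalks of sheaves on $X$, thereby reducing the isomorphism assertion, and likewise the $f$-c-softness assertion, to the corresponding classical statements for the constant map from a locally compact Hausdorff space to a point. Recall that a morphism of sheaves on $X$ is an isomorphism precisely when it is an isomorphism on all stalks, so I would fix $x \in X$ and compute the stalk at $x$ of the morphism in question; put $L \pdef f^{-1}(x)$, which is locally compact Hausdorff because $f$ is separated and locally proper. The stalk at $x$ of $(f_{(!)}\mathcal{F})\otimes\mathcal{G}$ is $(f_{(!)}\mathcal{F})_x \otimes \mathcal{G}_x$, and proper base change \ref{t:BaWe}, in the single-point form established in its proof, identifies $(f_{(!)}\mathcal{F})_x$ with $\Gamma_!(L;\mathcal{F}|_L)$. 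The stalk at $x$ of $f_{(!)}(\mathcal{F}\otimes f^{(\ast)}\mathcal{G})$ is, again by \ref{t:BaWe}, equal to $\Gamma_!(L;(\mathcal{F}\otimes f^{(\ast)}\mathcal{G})|_L)$, and since inverse image commutes with tensor products and $(f^{(\ast)}\mathcal{G})|_L$ is the constant sheaf $(\mathcal{G}_x)_L$ on $L$ with stalk $\mathcal{G}_x$, this equals $\Gamma_!(L;\mathcal{F}|_L \otimes (\mathcal{G}_x)_L)$.

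The next step is to verify that, under these identifications, the stalk at $x$ of the morphism from \ref{NatT} becomes the obvious comparison morphism $\Gamma_!(L;\mathcal{H})\otimes M \ra \Gamma_!(L;\mathcal{H}\otimes M_L)$ with $\mathcal{H} = \mathcal{F}|_L$ and $M = \mathcal{G}_x$; this is a diagram chase, since both morphisms arise by adjunction from the isomorphism comparing tensor product and inverse image. Granting this, flatness of $\mathcal{G}$ means that the abelian group $\mathcal{G}_x$ is flat, and the classical projection formula on the locally compact Hausdorff space $L$ (see \cite{KS}; it reduces by Lazard's theorem to the case of a finitely generated free module, where it is immediate, using that $\Gamma_!$ commutes with filtered colimits here as in \ref{l:VTDLa}) shows that this comparison morphism is an isomorphism. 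This proves the first assertion.

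For the second assertion I would use that flatness of $\mathcal{G}$ is inherited by $f^{(\ast)}\mathcal{G}$ and then by restriction to any subspace, and that being $f$-c-soft is a condition on the restrictions to the fibers of $f$. For $x \in X$ we have, as above, $(\mathcal{F}\otimes f^{(\ast)}\mathcal{G})|_L \cong \mathcal{F}|_L \otimes (\mathcal{G}_x)_L$, in which $\mathcal{F}|_L$ is c-soft by hypothesis and $(\mathcal{G}_x)_L$ is a flat sheaf on the locally compact Hausdorff space $L$. The classical fact that on such a space the tensor product of a c-soft sheaf with a flat sheaf is again c-soft (see \cite{KS}, or reduce as in \ref{dSK} to a finitely generated free module) then shows that $(\mathcal{F}\otimes f^{(\ast)}\mathcal{G})|_L$ is c-soft, hence $\mathcal{F}\otimes f^{(\ast)}\mathcal{G}$ is $f$-c-soft.

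I expect the only genuinely laborious step to be the compatibility check in the second paragraph, that is, matching the abstractly defined morphism of \ref{NatT}, after passing to stalks and fibers, with the concrete classical comparison morphism; all the substantive content sits in the two cited facts about locally compact Hausdorff spaces, in keeping with the paper's remark that once underived proper base change is established only routine verifications remain.
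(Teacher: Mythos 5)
Your proposal is correct and follows essentially the same route as the paper: reduce via stalks and proper base change \ref{t:BaWe} to the fiber $f^{-1}(x)$, which is locally compact Hausdorff, and invoke the classical projection formula and the classical ``c-soft $\otimes$ flat is c-soft'' statement there (the paper cites \cite[Prop.~2.5.13, Prop.~2.5.12]{KS} for exactly these two facts). Your version is in fact slightly more careful, since you flag the compatibility check identifying the stalk of the morphism of \ref{NatT} with the classical comparison map, which the paper leaves implicit.
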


\begin{proof}
  Proper base change~\ref{t:BaWe} reduces the first claim to the
  case 
  that $X$ consists of a single point which is known, see
  \cite[Prop.~2.5.13]{KS}. The last claim follows from
  \cite[Prop.~2.5.12]{KS}. 
\end{proof}

\begin{theorem}[{\bf Derived projection formula}]
  \label{t:ANNO} 
  Let $f \colon  Y \rightarrow X$ be separated  locally
  proper.
  Then there is a canonical isomorphism
  \begin{equation*}
    (f_{!} \mathcal{F}) \otimes^{\op{L}} \mathcal{G}
    \sira 
    f_{!} (\mathcal{F} \otimes^{\op{L}} f^\ast \mathcal{G})
  \end{equation*}
  which is natural in $\mathcal{F} \in D^+(Y)$
  and $\mathcal{G} \in D^+(X).$
\end{theorem}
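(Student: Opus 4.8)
The plan is to derive the underived projection morphism of~\ref{NatT} on a single pair of well-chosen resolutions and then to read off from the underived projection formula~\ref{t:ProFor} that the resulting morphism of complexes is already an isomorphism termwise. First I would represent $\mathcal{F}$ by a bounded-below complex $\mathcal{S}^\bullet$ of injective sheaves on $Y$; by~\ref{ikww} each $\mathcal{S}^i$ is $f$-c-soft, hence $f_{(!)}$-acyclic by~\ref{l:teL}, so $f_!\mathcal{F}$ is computed by $f_{(!)}\mathcal{S}^\bullet$. Next I would represent $\mathcal{G}$ by a bounded-below complex $\mathcal{P}^\bullet$ of flat sheaves on $X$; such a bounded-below flat resolution exists because over $\DZ$ every sheaf of abelian groups has flat dimension $\le 1$ (flatness amounts to torsion-freeness of all stalks, and a subsheaf of a sheaf with torsion-free stalks inherits this property). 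Then $f^{(\ast)}\mathcal{P}^\bullet$ is again a bounded-below, hence K-flat, flat resolution of $f^\ast\mathcal{G}$ on $Y$, so that $(f_{(!)}\mathcal{S}^\bullet)\otimes\mathcal{P}^\bullet$ represents $(f_!\mathcal{F})\otimes^{\op{L}}\mathcal{G}$ while $\mathcal{S}^\bullet\otimes f^{(\ast)}\mathcal{P}^\bullet$ represents $\mathcal{F}\otimes^{\op{L}}f^\ast\mathcal{G}$.

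The step that makes everything fit together is the last assertion of~\ref{t:ProFor}: since each $\mathcal{S}^i$ is $f$-c-soft and each $\mathcal{P}^j$ is flat, every term $\mathcal{S}^i\otimes f^{(\ast)}\mathcal{P}^j$ is again $f$-c-soft, hence $f_{(!)}$-acyclic by~\ref{l:teL}. As $\mathcal{S}^\bullet$ and $\mathcal{P}^\bullet$ are bounded below, the total complex of $\mathcal{S}^\bullet\otimes f^{(\ast)}\mathcal{P}^\bullet$ is a bounded-below complex of $f_{(!)}$-acyclic sheaves, so $f_{(!)}$ applied to it computes $f_!(\mathcal{F}\otimes^{\op{L}}f^\ast\mathcal{G})$, and $f_{(!)}$ (being left exact) commutes with the finite direct sums defining this total complex. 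Feeding the pairs $(\mathcal{S}^i,\mathcal{P}^j)$ into the morphism of~\ref{NatT}, which is natural in both variables, I obtain a morphism of total complexes
\begin{equation*}
  (f_{(!)}\mathcal{S}^\bullet)\otimes\mathcal{P}^\bullet \lra f_{(!)}\!\left(\mathcal{S}^\bullet\otimes f^{(\ast)}\mathcal{P}^\bullet\right)
\end{equation*}
which represents the sought morphism $(f_!\mathcal{F})\otimes^{\op{L}}\mathcal{G}\ra f_!(\mathcal{F}\otimes^{\op{L}}f^\ast\mathcal{G})$. Since each $\mathcal{P}^j$ is flat, \ref{t:ProFor} says that every component of this map is an isomorphism, hence so is the morphism of total complexes, and the theorem follows. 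Naturality in $\mathcal{F}$ and $\mathcal{G}$ comes, as usual, from the functoriality up to homotopy of injective and flat resolutions. (Alternatively, once the morphism is in place one can deduce the isomorphy by reducing to the case $X=\{\op{pt}\}$ via derived proper base change~\ref{t:DeBaW} and the compatibility of $\otimes^{\op{L}}$ with inverse image, exactly as in the proof of~\ref{t:ProFor}; then $Y$ is locally compact Hausdorff and the statement is classical.)

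I expect the main obstacle to be essentially organizational: one must exhibit a single choice of resolutions adapted simultaneously to $f_{(!)}$, to $\otimes^{\op{L}}$ and to $f^{(\ast)}$, and in particular keep all complexes inside $D^+$, where the acyclicity input~\ref{l:teL} is available. The naive recipe for $\otimes^{\op{L}}$ would call for (possibly unbounded) K-flat resolutions; the observation that over $\DZ$ bounded-below flat resolutions already suffice is what makes the bounded argument go through without invoking the unbounded machinery of~\ref{t:comp-bc-proj}.
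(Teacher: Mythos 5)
Your proposal is correct and follows essentially the same route as the paper: replace $\mathcal{F}$ by a bounded-below complex of injective (hence $f$-c-soft, hence $f_{(!)}$-acyclic) sheaves, replace $\mathcal{G}$ by a bounded-below complex of flat sheaves, and then apply the underived projection formula \ref{t:ProFor} termwise together with \ref{l:teL}. The paper states this in two lines; you have merely filled in the details (including the useful remark that flat dimension $\le 1$ over $\DZ$ keeps the flat resolution bounded below).
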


\begin{proof}
  We can assume that $\mathcal{F}$ is a bounded below complex of
  injective sheaves and that $\mathcal{G}$ is a bounded below
  complex of flat sheaves. Then the claim follows from
  the underived projection formula~\ref{t:ProFor} and
  \ref{l:teL}. 
\end{proof}

\section{Verdier duality}
\label{sec:verdier-duality}


\begin{Bemerkungl}
  If $Y$ is a topological space,
  $\mathcal{G}$ a sheaf on $Y$ 
  and $j\colon V \co Y$ the
  embedding of an open subset we define
  $\mathcal{G}_{V\subset Y}
  \pdef j_{(!)} j^{(\ast)} \mathcal{G} = j_{(!)}
  j^{(!)}\mathcal{G}\in\Sh(Y),$ cf.\ \ref{rem:shriek-embedding}.
  Since both $j_{(!)}$ and $j^{(\ast)}=j^{(!)}$ are exact we
  have
  $\mathcal{G}_{V\subset Y}
  \pdef j_{!} j^{\ast} \mathcal{G}$ in the derived category.
  We write $\DZ_{V \subset Y}$ instead of $(\DZ_Y)_{V \subset
    Y}=j_{(!)}\DZ_V$. 
\end{Bemerkungl}

\begin{Lemma}
  \label{l:Kfaz}
  Let $f \colon Y \rightarrow X$ be separated and locally proper.
  Then a sheaf $\mathcal{G} \in \Sh(Y)$ is 
  $f$-c-soft if and only if the sheaf
  $\mathcal{G}_{V\subset Y}$ is $f_{(!)}$-acyclic for all open
  subsets $V \co Y.$ 
  In particular, if $\mathcal{G}$ is $f$-c-soft then so is
  $\mathcal{G}_{V\subset Y}.$
\end{Lemma}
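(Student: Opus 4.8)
The plan is to reduce both implications, and the final assertion, to the fibers of $f$ and then invoke standard properties of c-soft sheaves on locally compact Hausdorff spaces. For a point $x\in X$ write $\iota\colon F\hra Y$ for the inclusion of the fiber $F:=f^{-1}(x)$. This $F$ is locally compact Hausdorff, being obtained from $f$ by base change along $\{x\}\hra X$ (use \ref{l:BC-localprop}, \ref{sep-BC}, and the fact that a topological space admitting a separated locally proper map to a point is locally compact Hausdorff). I will freely use that the restriction of a c-soft sheaf to an open subset is again c-soft, since compact subsets of the open subset are compact in the ambient space.

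For the implication from $f$-c-softness to the stated acyclicity --- which will simultaneously establish the ``in particular'' claim --- I would fix an open subset $V\co Y$ and observe that its inclusion $j\colon V\hra Y$ is separated and, by \ref{l:embedding-locprop}, locally proper (as $V$ is in particular locally closed). Moreover $j^{(\ast)}\mathcal{G}=\mathcal{G}|_V$ is $(f\circ j)$-c-soft, because the fiber of $f\circ j$ over a point $x$ is $V\cap f^{-1}(x)$, an open subset of $f^{-1}(x)$, and $\mathcal{G}|_{f^{-1}(x)}$ is c-soft by assumption. Hence \ref{l:EDKW}, applied with $g=j$, shows that $\mathcal{G}_{V\subset Y}=j_{(!)}j^{(\ast)}\mathcal{G}$ is $f$-c-soft, which is exactly the ``in particular'' statement; and then \ref{l:teL} shows it is $f_{(!)}$-acyclic.

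For the converse I would assume $\mathcal{G}_{V\subset Y}$ is $f_{(!)}$-acyclic for every open $V\co Y$, fix $x$, and prove that $\mathcal{H}:=\mathcal{G}|_F$ is c-soft by showing that $\Gamma(F;\mathcal{H})\ra\Gamma(K;\mathcal{H})$ is surjective for every compact $K\co F$; this surjectivity forces $\mathcal{H}|_{K'}$ to be soft for every compact $K'$, hence $\mathcal{H}$ to be c-soft. Given such a $K$, put $U:=F\setminus K$ and pick an open $V\co Y$ with $V\cap F=U$ (possible since $U$ is open in the subspace $F$). Since extension by zero commutes with the inverse image $\iota^{(\ast)}$, one has $\iota^{(\ast)}(\mathcal{G}_{V\subset Y})\cong\mathcal{H}_{U\subset F}$, and derived proper base change \ref{t:DeBaW}, applied to the cartesian square exhibiting $F$ as the fiber of $f$ over $x$, identifies $H^k_c(F;\mathcal{H}_{U\subset F})$ with the stalk at $x$ of $\op{R}^kf_{(!)}(\mathcal{G}_{V\subset Y})$, which vanishes for $k\geq 1$ by hypothesis. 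Applying $\Gamma_!(F;-)$ to the short exact sequence $0\ra\mathcal{H}_{U\subset F}\ra\mathcal{H}\ra i_{(\ast)}(\mathcal{H}|_K)\ra 0$ --- with $i\colon K\hra F$ the closed, hence proper, inclusion --- and using $\Gamma_!(F;i_{(\ast)}\mathcal{H}|_K)=\Gamma_!(K;\mathcal{H}|_K)=\Gamma(K;\mathcal{H})$ (via \ref{t:DBE} and compactness of $K$), the long exact sequence gives $\Gamma_!(F;\mathcal{H})\ra\Gamma(K;\mathcal{H})\ra H^1_c(F;\mathcal{H}_{U\subset F})=0$, so $\Gamma(F;\mathcal{H})\ra\Gamma(K;\mathcal{H})$ is surjective because $\Gamma_!(F;\mathcal{H})\subset\Gamma(F;\mathcal{H})$.

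The only genuinely nontrivial step is the passage to fibers: via \ref{t:DeBaW} the family of hypotheses ``$\mathcal{G}_{V\subset Y}$ is $f_{(!)}$-acyclic for all open $V$'' becomes precisely the family ``$\mathcal{H}_{W\subset F}$ is $\Gamma_!(F;-)$-acyclic for all open $W\co F$'' on each fiber $F$, and the heart of the argument is to recognise the latter as a criterion for c-softness. Everything else --- separatedness and local properness of the embeddings used, stability of c-softness under restriction to open subsets, compatibility of extension by zero with inverse image, and the short exact sequence attached to an open–closed decomposition --- is routine bookkeeping.
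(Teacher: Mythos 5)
Your proof is correct, and its backbone --- reducing to the fibers of $f$ by derived proper base change \ref{t:DeBaW} and then characterizing c-softness on the locally compact Hausdorff fibers --- is exactly the paper's strategy. The differences lie in how the two directions are discharged. The paper runs both directions through the single fiberwise equivalence ``$\mathcal{G}_{V\subset Y}$ is $f_{(!)}$-acyclic if and only if $(k^{\ast}\mathcal{G})_{(V\cap f^{-1}(x))\subset f^{-1}(x)}$ is $\Gamma_!$-acyclic for all $x$'' and then cites \cite[Exercise~II.6]{KS} for the equivalence of the latter condition with c-softness; your converse direction is precisely an explicit proof of the nontrivial half of that exercise (open--closed decomposition, vanishing of $H^1_c$ of the extension by zero, and the inclusion $\Gamma_!\subset\Gamma$), which makes the argument self-contained. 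For the forward direction and the ``in particular'' claim you instead invoke \ref{l:EDKW} with $g$ the open embedding $j\colon V\hra Y$ together with \ref{l:teL}; this is a clean shortcut the paper does not take (it deduces the ``in particular'' statement from the already-proved equivalence via $(\mathcal{G}_{U\subset Y})_{V\subset Y}\cong\mathcal{G}_{U\cap V\subset Y}$), and it is legitimate since both lemmas precede this one in the text. The only cosmetic blemish is the detour through ``softness of $\mathcal{H}|_{K'}$'' in the converse: surjectivity of $\Gamma(F;\mathcal{H})\ra\Gamma(K;\mathcal{H})$ for every compact $K$ is already the definition \ref{d:cpt-coft} of c-softness, so no further step is needed at that point.
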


\begin{proof}
  Denote by $i=i_x\colon  \{x\} \hookrightarrow X$ the
  embedding of 
  a point $x\in X$. A sheaf $\mathcal{F}\in \Sh(Y)$ is 
  $f_{(!)}$-acyclic if and only if 
  $\mathcal{H}^\nu i_x^\ast f_! \mathcal{F}\neq 0$ implies $\nu=0$
  for all $\nu \in \DZ$ and $x\in X$.

  Let $j\colon V \co Y$ be the inclusion of an open subset and
  consider  
  for arbitrary $x \in X$ the
  following 
  diagram with two cartesian squares.
  \begin{displaymath}
    \xymatrix{
      V\cap f^{-1} (x)\ar[d]_{u} \ar[rr]^l && V \ar[d]^j\\
      f^{-1} (x)\ar[d]_{g} \ar@{^{(}->}^k[rr] && Y\ar[d]^f\\
      \{x\} \ar@{^{(}->}^i[rr] &&X
    }
  \end{displaymath}
  Several proper base changes \ref{t:DeBaW} show
  \begin{equation*}
    i^\ast f_!\mathcal{G}_{V\subset Y}\cong g_! k^\ast
    \mathcal{G}_{V\subset Y} = g_! k^\ast j_! j^\ast \mathcal{G}
    \cong g_! 
    u_!l^\ast j^\ast \mathcal{G} \cong g_!u_!u^\ast k^\ast
    \mathcal{G}. 
  \end{equation*}
  Using the above criterion, $\mathcal{G}_{V\subset Y}$ is
  $f_{(!)}$-acyclic if and only if the sheaf 
  $(k^\ast \mathcal{G})_{(V\cap f^{-1} (x))\subset f^{-1} (x)}$ 
  is $\Gamma_!$-acyclic for every $x \in X.$
  Now use
  \cite[Exercise~II.6]{KS}.
  For the last claim use that 
  proper base change~\ref{t:BaWe} 
  implies that
  $(\mathcal{G}_{U \subset X})_{V \subset
    X} \cong \mathcal{G}_{U \cap V \subset X}$ for open subsets
  $U$ and $V$ of $X.$ 
\end{proof}

\begin{Lemma}
  \label{l:f-c-soft-criterion}
  Let $f\colon Y \ra X$ be a separated locally proper map
  with 
  $f_{(!)}$ of finite cohomological
  dimension $ \leq d.$
  If 
  \begin{equation*}
    \mathcal{G}^0 \ra \cdots \ra \mathcal{G}^{d-1} \ra
    \mathcal{G}^d \ra 0
  \end{equation*}
  is an exact sequence in $\Sh(Y)$  and $\mathcal{G}^0,
  \mathcal{G}^1, \dots, \mathcal{G}^{d-1}$ are 
  $f$-c-soft then so is $\mathcal{G}^d$.
\end{Lemma}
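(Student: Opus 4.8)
The plan is to reduce the lemma, via the characterization of $f$-c-softness in \ref{l:Kfaz}, to a dimension-shifting argument for the left exact functor $f_{(!)}$ and its derived functors $\op{R}^k f_{(!)}$. First I would fix an open subset $V \co Y$ with inclusion $j \colon V \hra Y$ and apply the functor $(-)_{V\subset Y} = j_{(!)} j^{(\ast)}$ to the given sequence; this functor is exact, since $j^{(\ast)}$ is exact and $j_{(!)}$ is exact for the open embedding $j$ by \ref{rem:shriek-embedding}. Hence we obtain an exact sequence
\[
 (\mathcal{G}^0)_{V\subset Y} \ra \cdots \ra (\mathcal{G}^{d-1})_{V\subset Y} \ra (\mathcal{G}^d)_{V\subset Y} \ra 0,
\]
in which, by the last assertion of \ref{l:Kfaz}, the first $d$ sheaves are $f$-c-soft, hence $f_{(!)}$-acyclic by \ref{l:teL}. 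So it is enough to prove the following: if $\mathcal{H}^0 \ra \cdots \ra \mathcal{H}^{d-1} \ra \mathcal{H}^d \ra 0$ is exact in $\Sh(Y)$ with $\mathcal{H}^0, \dots, \mathcal{H}^{d-1}$ all $f_{(!)}$-acyclic, then $\mathcal{H}^d$ is $f_{(!)}$-acyclic. Applying this to $\mathcal{H}^i = (\mathcal{G}^i)_{V\subset Y}$ shows that $(\mathcal{G}^d)_{V\subset Y}$ is $f_{(!)}$-acyclic for every open $V \co Y$, and then $\mathcal{G}^d$ is $f$-c-soft by \ref{l:Kfaz}.

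For the homological statement (we may assume $d \geq 1$) I would put $\mathcal{K} \pdef \ker(\mathcal{H}^0 \ra \mathcal{H}^1)$ and, for $1 \leq i \leq d$, $\mathcal{Z}^i \pdef \op{im}(\mathcal{H}^{i-1} \ra \mathcal{H}^i)$, so that $\mathcal{Z}^d = \mathcal{H}^d$ and the sequence decomposes into short exact sequences
\[
 0 \ra \mathcal{K} \ra \mathcal{H}^0 \ra \mathcal{Z}^1 \ra 0, \qquad 0 \ra \mathcal{Z}^i \ra \mathcal{H}^i \ra \mathcal{Z}^{i+1} \ra 0 \quad (1 \leq i \leq d-1).
\]
Since $\op{R}^k f_{(!)} \mathcal{H}^i = 0$ for $k \geq 1$, the long exact sequences of $\op{R}^\bullet f_{(!)}$ attached to these short exact sequences give $\op{R}^k f_{(!)} \mathcal{Z}^{i+1} \cong \op{R}^{k+1} f_{(!)} \mathcal{Z}^i$ and $\op{R}^k f_{(!)} \mathcal{Z}^1 \cong \op{R}^{k+1} f_{(!)} \mathcal{K}$ for all $k \geq 1$; chaining them yields
\[
 \op{R}^k f_{(!)} \mathcal{H}^d = \op{R}^k f_{(!)} \mathcal{Z}^d \cong \op{R}^{k+d-1} f_{(!)} \mathcal{Z}^1 \cong \op{R}^{k+d} f_{(!)} \mathcal{K} \quad (k \geq 1).
\]
As $k + d \geq d+1$, the last group vanishes because $f_{(!)}$ has cohomological dimension $\leq d$, so $\mathcal{H}^d$ is $f_{(!)}$-acyclic.

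The main point to get right, and the only genuine subtlety, is the first reduction: $f_{(!)}$-acyclicity of $\mathcal{G}^d$ by itself does not yield $f$-c-softness, so one really needs \ref{l:Kfaz} in order to be allowed to twist by every open $V$, and one needs that $(-)_{V\subset Y}$ both preserves exactness of the sequence and sends $f$-c-soft sheaves to $f$-c-soft sheaves. Once that is in place, the dimension shifting is entirely standard and the hypothesis on the cohomological dimension is used exactly once, to kill $\op{R}^{k+d} f_{(!)} \mathcal{K}$.
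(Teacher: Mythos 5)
Your proposal is correct and follows essentially the same route as the paper: reduce via Lemma~\ref{l:Kfaz} to showing that $(\mathcal{G}^d)_{V\subset Y}$ is $f_{(!)}$-acyclic for every open $V$, then dimension-shift through the kernels/images of the truncated exact sequence until the cohomological dimension bound kills the top derived functor. The only cosmetic difference is that the paper indexes by the kernels $\mathcal{K}^i=\ker(\mathcal{G}^i\ra\mathcal{G}^{i+1})$ rather than your images $\mathcal{Z}^i$, which coincide by exactness.
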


\begin{proof}
  We use \ref{l:Kfaz}.
  Let $V \co Y$ be an open subset.
  We have to show that $(\mathcal{G}^d)_{V\subset Y}$ is
  $f_{(!)}$-acyclic. 
  The sequence 
  \begin{equation*}
    (\mathcal{G}^0)_{V \subset Y} \ra \cdots \ra
    (\mathcal{G}^{d-1})_{V \subset Y} \ra
    (\mathcal{G}^d)_{V \subset Y} \ra 0  
  \end{equation*}
  is exact in $\Sh(Y)$ and 
  all sheaves $(\mathcal{G}^0)_{V \subset Y}, \dots,
  (\mathcal{G}^{d-1})_{V \subset Y}$ are $f_{(!)}$-acyclic.
  Let $\mathcal{K}^i$ be the kernel of $\mathcal{G}^i \ra
  \mathcal{G}^{i+1}$ for $i=0, \dots, d-1.$
  For $p>0$ we obtain isomorphisms
  \begin{equation*}
    \op{R}^p\! f_{(!)}((\mathcal{G}^d)_{V \subset Y}) 
    \sira 
    \op{R}^{p+1}\! f_{(!)}(\mathcal{K}^{d-1}_{V \subset Y}) 
    \sira \dots \sira
    \op{R}^{p+d}\! f_{(!)}(\mathcal{K}^{0}_{V \subset Y})=0.
  \end{equation*}
  This shows that $(\mathcal{G}^d)_{V\subset Y}$ is
  $f_{(!)}$-acyclic. 
\end{proof}

\begin{Lemma}
  \label{l:tensorflatfsoftfsoft}
  Let $f\colon Y \ra X$ be a separated locally proper map
  with 
  $f_{(!)}$ of finite cohomological
  dimension.
  Let $\mathcal{K} \in \Sh(Y)$ be a flat and $f$-c-soft sheaf.
  Then $\mathcal{G} \otimes \mathcal{K}$ is $f$-c-soft for  
  any sheaf $\mathcal{G} \in \Sh(Y)$.
\end{Lemma}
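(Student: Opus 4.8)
The plan is to deduce the statement from the finite-cohomological-dimension criterion~\ref{l:f-c-soft-criterion}, using a resolution of $\mathcal{G}$ by particularly simple flat sheaves. First I would fix an integer $d \geq 1$ such that $f_{(!)}$ has cohomological dimension $\leq d$; such a $d$ exists by hypothesis. Then I would invoke the standard free resolution of a sheaf: for an open subset $U \co Y$ the sheaf $\DZ_{U\subset Y}$ has all stalks equal to $\DZ$ or $0$, hence is flat, so any direct sum of such sheaves is flat; since $\Hom(\DZ_{U\subset Y}, \mathcal{H}) = \mathcal{H}(U)$ for every sheaf $\mathcal{H}$, the tautological morphism $\bigoplus_{(U,s)} \DZ_{U\subset Y} \ra \mathcal{H}$, with $(U,s)$ running over all pairs of an open subset $U$ and a section $s \in \mathcal{H}(U)$, is an epimorphism. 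Iterating this $d$ times starting from $\mathcal{G}$ yields an exact sequence
\[
  0 \ra \mathcal{N} \ra \mathcal{L}_{d-1} \ra \cdots \ra \mathcal{L}_0 \ra \mathcal{G} \ra 0
\]
in $\Sh(Y)$ in which each $\mathcal{L}_i$ is a direct sum of sheaves of the form $\DZ_{U\subset Y}$ and $\mathcal{N}$ is an arbitrary sheaf.

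The key point I would establish next is that each $\mathcal{L}_i \otimes \mathcal{K}$ is $f$-c-soft. Writing $\mathcal{L}_i = \bigoplus_\alpha \DZ_{U_\alpha \subset Y}$ and using that the tensor product commutes with direct sums reduces this to showing that $\DZ_{U\subset Y} \otimes \mathcal{K}$ is $f$-c-soft for each open $U \co Y$. Let $j \colon U \co Y$ be the inclusion, which is separated and, by~\ref{l:embedding-locprop}, locally proper. Since $\mathcal{K}$ is flat, the projection formula~\ref{t:ProFor} identifies $\DZ_{U\subset Y} \otimes \mathcal{K} = (j_{(!)}\DZ_U) \otimes \mathcal{K}$ with $j_{(!)}(\DZ_U \otimes j^{(\ast)}\mathcal{K}) = j_{(!)}j^{(\ast)}\mathcal{K} = \mathcal{K}_{U\subset Y}$, and this last sheaf is $f$-c-soft by~\ref{l:Kfaz} because $\mathcal{K}$ is $f$-c-soft. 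As direct sums of $f$-c-soft sheaves are $f$-c-soft by~\ref{dSK}, it follows that $\mathcal{L}_i \otimes \mathcal{K}$ is $f$-c-soft. (One may also check the isomorphism $\DZ_{U\subset Y} \otimes \mathcal{K} \cong \mathcal{K}_{U\subset Y}$ directly on stalks, circumventing the projection formula.)

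Finally, since $\mathcal{K}$ is flat the functor $- \otimes \mathcal{K}$ is exact, so tensoring the resolution above with $\mathcal{K}$ and forgetting the injection from $\mathcal{N} \otimes \mathcal{K}$ produces an exact sequence
\[
  \mathcal{L}_{d-1} \otimes \mathcal{K} \ra \cdots \ra \mathcal{L}_0 \otimes \mathcal{K} \ra \mathcal{G} \otimes \mathcal{K} \ra 0
\]
in $\Sh(Y)$ of exactly the shape required in~\ref{l:f-c-soft-criterion}, whose first $d$ terms are $f$-c-soft by the previous step. Lemma~\ref{l:f-c-soft-criterion} then shows at once that $\mathcal{G} \otimes \mathcal{K}$ is $f$-c-soft, which is the assertion. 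I do not anticipate a genuine obstacle: the argument is routine homological bookkeeping, and the only thing one has to notice is that tensoring the free resolution of $\mathcal{G}$ against the flat $f$-c-soft sheaf $\mathcal{K}$ stays within the $f$-c-soft sheaves (via~\ref{l:Kfaz} and~\ref{dSK}); the finiteness of the cohomological dimension of $f_{(!)}$ enters precisely to truncate the resolution to a length that~\ref{l:f-c-soft-criterion} can absorb.
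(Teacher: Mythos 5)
Your argument is correct and is essentially the paper's proof: resolve $\mathcal{G}$ by direct sums of sheaves $\DZ_{U\subset Y}$, tensor with the flat sheaf $\mathcal{K}$, identify $\DZ_{U\subset Y}\otimes\mathcal{K}\cong\mathcal{K}_{U\subset Y}$ (which is $f$-c-soft by \ref{l:Kfaz} and \ref{dSK}), and conclude via \ref{l:f-c-soft-criterion}. The only cosmetic difference is that you truncate the resolution at length $d$ explicitly, whereas the paper works with the full resolution and leaves the truncation implicit in its appeal to \ref{l:f-c-soft-criterion}.
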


\begin{proof}
  Any sheaf $\mathcal{G}$ on $Y$ has a resolution
  \begin{equation*}
    \dots \ra \mathcal{G}^{-1} \ra \mathcal{G}^0 \ra
    \mathcal{G} 
    \ra 0
  \end{equation*}
  where each $\mathcal{G}^j$ is a direct sum of sheaves of the
  form 
  $\DZ_{V \subset Y}$ with $V \co Y$ open.
  We obtain an exact sequence
  \begin{equation*}
    \dots \ra
    \mathcal{G}^{-1}\otimes \mathcal{K} \ra
    \mathcal{G}^0\otimes \mathcal{K} \ra
    \mathcal{G}\otimes \mathcal{K} 
    \ra 0.
  \end{equation*}
  By 
  \ref{l:f-c-soft-criterion}
  it is sufficient to show that each
  $\mathcal{G}^j \otimes \mathcal{K}$ is $f$-c-soft.
  But 
  $\mathcal{G}^j \otimes \mathcal{K}$ is isomorphic to a direct
  sum of sheaves 
  of the form 
  $\DZ_{V \subset Y}
  \otimes \mathcal{K} \cong \mathcal{K}_{V \subset Y}$ which are
  $f$-c-soft 
  by \ref{l:Kfaz}, so that we can use \ref{dSK}.
\end{proof}
\begin{Proposition}
  \label{p:fusk}
 Let $f \colon Y \ra X$ be a separated locally proper map
  with 
  $f_{(!)}$ of finite cohomological
  dimension, and 
  let $\mathcal{K} \in \Sh(Y)$ be a flat and $f$-c-soft sheaf.
  Then the functor
  \begin{equation*}
    f_{(!)}^{\mathcal{K}} := 
    f_{(!)}(- \otimes
    \mathcal{K}) \colon
    \Sh(Y) \ra
    \Sh(X)
  \end{equation*}
 preserves colimits
  and therefore by 
\ref{c:rightadjointex} 
admits  a right adjoint $f^{(!)}_{\mathcal{K}}$.
Furthermore $f_{(!)}^{\mathcal{K}}$ is exact and therefore its right
adjoint   $f^{(!)}_{\mathcal{K}}$ maps injective sheaves to injective sheaves.
\end{Proposition}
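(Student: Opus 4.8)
The plan is to split the assertion that $f_{(!)}^{\mathcal{K}}$ preserves colimits into two parts: that this functor is exact, and that it commutes with arbitrary coproducts. Since $\Sh(Y)$ is a Grothendieck abelian category, every small colimit is built from coproducts and cokernels, so an additive functor preserving both preserves all small colimits. The existence of the right adjoint then follows formally from \ref{c:rightadjointex}, and preservation of injectives is the standard consequence of exactness of the left adjoint.

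First I would prove exactness. As $\mathcal{K}$ is flat, $-\otimes\mathcal{K}$ is exact, and $f_{(!)}$ is left exact, so $f_{(!)}^{\mathcal{K}}=f_{(!)}(-\otimes\mathcal{K})$ is left exact. For right exactness, let $0\to\mathcal{N}\to\mathcal{G}\to\mathcal{G}'\to 0$ be a short exact sequence in $\Sh(Y)$. Flatness of $\mathcal{K}$ gives a short exact sequence $0\to\mathcal{N}\otimes\mathcal{K}\to\mathcal{G}\otimes\mathcal{K}\to\mathcal{G}'\otimes\mathcal{K}\to 0$. By \ref{l:tensorflatfsoftfsoft} the sheaf $\mathcal{N}\otimes\mathcal{K}$ is $f$-c-soft --- this is the one place where the hypothesis that $f_{(!)}$ has finite cohomological dimension is used --- hence $f_{(!)}$-acyclic by \ref{l:teL}, so $\op{R}^{1}f_{(!)}(\mathcal{N}\otimes\mathcal{K})=0$. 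The long exact cohomology sequence for $\op{R}f_{(!)}$ then shows that $f_{(!)}(\mathcal{G}\otimes\mathcal{K})\to f_{(!)}(\mathcal{G}'\otimes\mathcal{K})$ is surjective. Thus $f_{(!)}^{\mathcal{K}}$ is exact; this is the main obstacle, everything else being formal.

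Next, $-\otimes\mathcal{K}$ preserves all small colimits, being left adjoint to $\sheafHom(\mathcal{K},-)$. The functor $f_{(!)}$ is additive, hence preserves finite direct sums, and it commutes with filtered colimits by \ref{l:VTVT}; since an arbitrary coproduct is the filtered colimit of its finite sub-coproducts, $f_{(!)}$ preserves arbitrary coproducts. Hence $f_{(!)}^{\mathcal{K}}$ preserves coproducts, and combined with its exactness it preserves all small colimits, so \ref{c:rightadjointex} provides a right adjoint $f^{(!)}_{\mathcal{K}}\colon\Sh(X)\to\Sh(Y)$. Finally, for an injective sheaf $\mathcal{I}\in\Sh(X)$ the functor $\Sh_X(f_{(!)}^{\mathcal{K}}(-),\mathcal{I})\cong\Sh_Y(-,f^{(!)}_{\mathcal{K}}\mathcal{I})$ is the composition of the exact functor $f_{(!)}^{\mathcal{K}}$ with the exact contravariant functor $\Sh_X(-,\mathcal{I})$, hence exact; therefore $f^{(!)}_{\mathcal{K}}\mathcal{I}$ is injective.
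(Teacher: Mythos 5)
Your proposal is correct and follows essentially the same route as the paper: exactness comes from flatness of $\mathcal{K}$ together with the fact that $-\otimes\mathcal{K}$ lands in $f$-c-soft (hence $f_{(!)}$-acyclic) sheaves via \ref{l:tensorflatfsoftfsoft} and \ref{l:teL}, and preservation of colimits is reduced to right exactness plus preservation of coproducts, the latter obtained from \ref{l:VTVT} and additivity. Your write-up merely makes explicit the steps the paper leaves terse (the long exact sequence argument for right exactness and the reduction of arbitrary coproducts to filtered colimits of finite ones).
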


\begin{Bemerkungl}
  Any morphism of functors induces a morphism in the opposite direction
between the adjoint functors, if they exist. In particular, any 
morphism $\mathcal K\ra\mathcal L$ of flat $f$-c-soft sheaves 
will lead to a morphism $f^{(!)}_{\mathcal L}\ra f^{(!)}_{\mathcal K}$. 
\end{Bemerkungl}

\begin{proof}
  The functor $(- \otimes \mathcal{K})$ preserves colimits, is
  exact, and maps every sheaf to an $f$-c-soft sheaf
  by \ref{l:tensorflatfsoftfsoft}.
  Then $f^\mathcal{K}_{(!)}$ is exact
  by \ref{l:teL}
  and preserves colimits, because it preserves
  filtered colimits by
  \ref{l:VTVT}, in particular direct sums, and is right
  exact.  
  Therefore, we can apply \ref{c:rightadjointex}.
The remaining claim is obvious.
\end{proof}


    
    


    

\begin{theorem}[{\textbf{Verdier duality}}]
  \label{t:VdD-bounded}
  Let $f\colon Y \ra X$ be a separated locally proper map 
  with $f_{(!)}\colon \Sh(Y) \ra \Sh(X)$ of finite cohomological
  dimension.  Then the derived proper direct image functor
  $f_{!}\colon D^+(Y) \rightarrow D^+(X)$ has a right adjoint functor
  \begin{equation*}
    f^! \colon  D^+(X) \rightarrow D^+(Y). 
  \end{equation*}
\end{theorem}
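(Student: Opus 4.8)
The plan is to adapt Verdier's classical argument: fix once and for all a bounded resolution of $\DZ_Y$ by flat $f$-c-soft sheaves, use it to compute $f_!$ on the left, and assemble $f^!$ out of the family of right adjoints provided by Proposition~\ref{p:fusk}. \emph{Step 1 (a bounded flat $f$-c-soft resolution of $\DZ_Y$).} Say $f_{(!)}$ has cohomological dimension $\le d$. Take the Godement (flabby) resolution $\DZ_Y\to\mathcal C^0\to\mathcal C^1\to\cdots$. Since $\DZ_Y$ has torsion-free stalks and each stalk of a Godement sheaf, and of each cokernel appearing, is a filtered colimit of products of torsion-free abelian groups, every $\mathcal C^i$ has torsion-free stalks, hence is flat; being flabby it is $f$-c-soft by \ref{ikww}. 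Now truncate: put $\mathcal K^i=\mathcal C^i$ for $i<d$ and $\mathcal K^d=\ker(\mathcal C^d\to\mathcal C^{d+1})$, so that $0\to\DZ_Y\to\mathcal K^0\to\dots\to\mathcal K^d\to 0$ is exact. Here $\mathcal K^d$ is flat as a subsheaf of $\mathcal C^d$, and $f$-c-soft by Lemma~\ref{l:f-c-soft-criterion} applied to $\mathcal C^0\to\dots\to\mathcal C^{d-1}\to\mathcal K^d\to 0$. (This is the only place the finite cohomological dimension is really used, besides \ref{l:tensorflatfsoftfsoft}.)

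\emph{Step 2 ($f_!$ via the resolution).} For a bounded below complex $\mathcal F$ in $\Sh(Y)$, the canonical map $\operatorname{Tot}(\mathcal F\otimes\mathcal K^\bullet)\to\mathcal F$ is a quasi-isomorphism because $\mathcal K^\bullet$ is a bounded complex of flat sheaves resolving $\DZ_Y$. By \ref{l:tensorflatfsoftfsoft} each $\mathcal F^p\otimes\mathcal K^q$ is $f$-c-soft, hence $f_{(!)}$-acyclic by \ref{l:teL}; therefore $\operatorname{Tot}f_{(!)}(\mathcal F\otimes\mathcal K^\bullet)$ represents $f_!\mathcal F=\op{R}f_{(!)}\mathcal F$ in $D^+(X)$, functorially in $\mathcal F$. \emph{Step 3 (construction of $f^!$).} For each $i$ write $f^{\mathcal K^i}_{(!)}=f_{(!)}(-\otimes\mathcal K^i)$; by Proposition~\ref{p:fusk} it is exact with right adjoint $f^{(!)}_{\mathcal K^i}$ preserving injectives, and the differentials of $\mathcal K^\bullet$ induce natural transformations $f^{\mathcal K^i}_{(!)}\to f^{\mathcal K^{i+1}}_{(!)}$, hence, by the remark following \ref{p:fusk}, morphisms $f^{(!)}_{\mathcal K^{i+1}}\to f^{(!)}_{\mathcal K^i}$ in the opposite direction. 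Given $\mathcal G\in D^+(X)$, choose a bounded below injective resolution $\mathcal G\xra{\sim}\mathcal I^\bullet$ and define $f^!\mathcal G$ to be the total complex of the double complex $(p,i)\mapsto f^{(!)}_{\mathcal K^i}\mathcal I^p$ with $0\le i\le d$, horizontal differential from $\mathcal I^\bullet$ and vertical differential from the maps $f^{(!)}_{\mathcal K^{i+1}}\to f^{(!)}_{\mathcal K^i}$. Each term is a finite direct sum of injectives, so $f^!\mathcal G$ is a bounded below complex of injectives and lies in $D^+(Y)$; it is independent of $\mathcal I^\bullet$ up to canonical isomorphism, and functorial, because a quasi-isomorphism of bounded below injective complexes is a homotopy equivalence.

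\emph{Step 4 (the adjunction).} For $\mathcal F$ a bounded below complex in $\Sh(Y)$ and $\mathcal I^\bullet$ a bounded below complex of injectives on $X$, applying the adjunctions $f^{\mathcal K^i}_{(!)}\dashv f^{(!)}_{\mathcal K^i}$ componentwise yields a bijection, natural in both variables,
\begin{equation*}
  \Hom_{C(X)}\bigl(\operatorname{Tot}f_{(!)}(\mathcal F\otimes\mathcal K^\bullet),\,\mathcal I^\bullet\bigr)\;\cong\;\Hom_{C(Y)}\bigl(\mathcal F,\,f^!\mathcal I^\bullet\bigr).
\end{equation*}
The only nonformal point is that, under this bijection, the contribution of the $\mathcal K^\bullet$-differential to the chain-map condition on the left is matched exactly with the vertical differential of $f^!\mathcal I^\bullet$; this is precisely the compatibility encoded in the induced maps $f^{(!)}_{\mathcal K^{i+1}}\to f^{(!)}_{\mathcal K^i}$ (and the same adjunction sends homotopies to homotopies). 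Hence the bijection descends to $\Hom_{K(X)}$ and $\Hom_{K(Y)}$, and since $\mathcal I^\bullet$ and $f^!\mathcal I^\bullet$ consist of injectives both sides compute morphisms in $D^+(X)$ and $D^+(Y)$; combining with Step~2 gives $\Hom_{D^+(X)}(f_!\mathcal F,\mathcal G)\cong\Hom_{D^+(Y)}(\mathcal F,f^!\mathcal G)$, natural in $\mathcal F$ and $\mathcal G$. \emph{Main obstacle.} The substantive input is Step~1 (producing the bounded flat $f$-c-soft resolution); the genuinely fiddly part is Step~4, where one must keep track of signs in the total complexes and verify that the two chain-map conditions correspond under the termwise adjunction. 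Neither step is deep, but Step~4 should be written out carefully.
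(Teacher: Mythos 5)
Your proposal follows essentially the same route as the paper's proof: the Godement resolution of $\DZ_Y$ truncated in degree $d$ (flat and $f$-c-soft via \ref{l:ZFe}, \ref{ikww} and \ref{l:f-c-soft-criterion}), the identification of $f_!\mathcal F$ with $f_{(!)}(\mathcal F\otimes\mathcal K)$ on $f$-c-soft tensor products, and $f^!$ assembled as the total complex of the right adjoints $f^{(!)}_{\mathcal K^i}$ from \ref{p:fusk} applied to injective resolutions, with the termwise adjunction descending to homotopy and derived categories. The only cosmetic differences are that you truncate by a kernel where the paper takes a cokernel (isomorphic here by exactness) and that the quasi-isomorphism in your Step~2 should run $\mathcal F\to\operatorname{Tot}(\mathcal F\otimes\mathcal K^{\bullet})$, induced by $\DZ_Y\to\mathcal K^{0}$, rather than in the direction you wrote.
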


\begin{proof}
 Let $d$ be the cohomological dimension of $f_{(!)}$.
 Let $\DZ_Y\hra \mathcal{K}$ alias
  \begin{equation*}
    0 \ra \DZ_Y \hra \mathcal{K}^0 \ra \mathcal{K}^1 \ra \dots
    \ra \mathcal{K}^d \ra 0  
  \end{equation*}
  be the Godement resolution truncated in degree $d$:
  by this we mean that each $\mathcal{K}^i$ for $i =0, \dots,
  d-1$ is the sheaf of not necessarily continuous sections of
  the cokernel of the previous map and that $\mathcal{K}^d$ is
  the cokernel of the previous map.
  Then $\mathcal{K}$ consists of flat sheaves, by
  \ref{l:ZFe}, and even of
  $f$-c-soft sheaves,
  by \ref{ikww}
  and \ref{l:f-c-soft-criterion}.

  Given $\mathcal{F} \in C^+(X)$ we construct a double
  complex 
  with entries $f^{(!)}_{\mathcal{K}^{-p}}(\mathcal{F}^q)$
by applying our functors from \ref{p:fusk}.  We 
  denote its total complex by $f^{(!)}_\mathcal{K}(\mathcal{F}).$ 
  In this way we obtain a functor
  \begin{equation*}
    f^{(!)}_{\mathcal{K}}\colon  C^+(X)\ra C^+(Y)  
  \end{equation*}
  which is right
  adjoint to the functor
  \begin{equation*}
    f_{(!)}^\mathcal{K}:= f_{(!)}(-
    \otimes \mathcal{K}) \colon C^+(Y) \ra C^+(X)
  \end{equation*}
  and transforms complexes of injective sheaves to complexes of
  injective sheaves by \ref{p:fusk}.

  Let $\mathcal{G} \in C^+(Y)$ be arbitrary. 
  Since $\mathcal{K}$ is a bounded complex of flat sheaves we can
  assume that
  $\mathcal{G} \otimes^{\op{L}} \mathcal{K} =
  \mathcal{G} \otimes \mathcal{K}$ in $D^+(Y)$.
  Since $\mathcal{K}$ is a bounded complex of flat and $f$-c-soft
  sheaves, 
  \ref{l:tensorflatfsoftfsoft} 
  and \ref{l:teL} show that
  $\mathcal{G} \otimes \mathcal{K}$ consists of 
  $f$-c-soft and hence 
  $f_{(!)}$-acyclic sheaves.
  The quasi-isomorphism $\DZ_Y \ra \mathcal{K}$ 
  then shows that the obvious morphisms
  \begin{equation*}
    f_{(!)}^\mathcal{K}(\mathcal{G}) = f_{(!)} (\mathcal{G}
    \otimes \mathcal{K}) 
    \ra 
    f_{!} (\mathcal{G} \otimes \mathcal{K})
    =f_{!} (\mathcal{G} \otimes^{\op{L}} \mathcal{K})
    \leftarrow f_{!} (\mathcal{G})
  \end{equation*}
  are isomorphisms in $D^+(X)$. 
  Let
  \begin{equation*}
    f^!_{\mathcal{K}}\colon D^+(X) \ra D^+(Y)  
  \end{equation*}
  be the right derived functor of $f^{(!)}_\mathcal{K}\colon K^+(X) \ra
  D^+(Y)$. As usual it may be computed using injective
  resolutions. 

  Now let $\mathcal{G}\in C^+(Y)$ be arbitrary and
  $\mathcal{F}\in
  C^+(X)$ a complex of injective sheaves.
  Then the facts stated above show that all maps in the following
  diagram are isomorphisms.
  \begin{equation*}
    \xymatrix@R3ex{
      {\Hom_{K(X)}(f_{(!)} (\mathcal{G} \otimes \mathcal{K}),
        \mathcal{F})} \ar[d]^-{\sim}
      &
      {\Hom_{K(Y)}(\mathcal{G}, f^{(!)}_{\mathcal{K}}(\mathcal{F}))}
      \ar[l]_-{\sim}
      \ar[ddd]^-{\sim}\\
      {\Hom_{D(X)} (f_{(!)} (\mathcal{G} \otimes \mathcal{K}),
        \mathcal{F})} \\
      {\Hom_{D(X)} (f_{!} (\mathcal{G} \otimes^{\op{L}} \mathcal{K}),
        \mathcal{F})} \ar[u]_-{\sim} \ar[d]^-{\sim} \\
      {\Hom_{D(X)}(f_{!}(\mathcal{G}),\mathcal{F})} 
      &
      {\Hom_{D(Y)} (\mathcal{G}, f^!_{\mathcal{K}}(\mathcal{F}))}
    }
  \end{equation*}
  All maps are natural in complexes $\mathcal{G} \in C^+(Y)$ and
  complexes of injective 
  sheaves
  $\mathcal{F} \in C^+(X)$.
  This yields the desired adjunction
  by setting $f^!=f^!_\mathcal{K}$.
\end{proof}

 \section{The case of unbounded derived categories}
\begin{Bemerkungl}
  We finally explain how our results
  generalize 
  to unbounded derived categories as soon as the relevant
  involved maps $f$
  are separated locally proper with $f_{(!)}$ of finite
  cohomological dimension.
\end{Bemerkungl}


\begin{Bemerkungl}
  The derived functors $f^*$, $f_*$, $\op{R}\!\!\sheafHom$,
  $\otimes^{\op{L}}$ all exist on the level of unbounded derived
  categories (see \cite[Ch.~18]{KS-cat-sh}) and
  so does $f_!$ by \ref{rem:ShX-Grothendieck}
  and \ref{l:compute-right-derived}. \end{Bemerkungl}

\begin{theorem}[{cf.\ \cite[Thm.~B]{spaltenstein}}]
  \label{t:comp-bc-proj}
  Let $f\colon Y \ra X$ be a separated locally proper map
  such that
  $f_{(!)}$
  has finite cohomological
  dimension. Then:
  \begin{enumerate}
  \item
    \label{enum:f-upper-shriek} 
    The derived proper direct image functor
    $f_{!}\colon D(Y) \rightarrow D(X)$ has a right adjoint functor
    $f^! \colon  D(X) \rightarrow D(Y)$. 
  \item
    \label{enum:comp}
    If $g\colon Z \ra Y$ is another separated locally proper
    map such that
    $g_{(!)}$ has finite cohomological
    dimension 
    then $f \circ g$ is separated locally proper with
    $(f \circ g)_{(!)}$ of finite cohomological dimension and
    there are 
    isomorphisms 
    $(f \circ g)_! \sira f_! \circ g_!$ and
    $(f \circ g)^! \sila g^! \circ f^!$ of triangulated
    functors. 
  \item 
    \label{enum:properbasechange}
    Let a cartesian diagram $f\circ q=p\circ g$  of topological
    spaces as in 
    \ref{cDi} be given.
    Then $g$ is separated locally proper with $g_{(!)}$ of finite
    cohomological 
    dimension and there are isomorphisms
    \begin{align*}
      p^\ast \circ f_!  \sira g_! \circ q^\ast \quad
      \text{resp.} \quad
      f^! \circ p_*  \sila q_* \circ g^!
    \end{align*}
    in the space of triangulated functors
    from $D(Y)$ to $D(Z)$ resp.\ from $D(Z)$ to $D(Y)$.
  \item
    \label{enum:one-map} 
    For all $\mathcal{A} \in D(X)$ and $\mathcal{B}$,
    $\mathcal{C} \in D(Y)$ there are natural isomorphisms
    \begin{align*}
      (f_{!} \mathcal{A}) \otimes^{\op{L}} \mathcal{B}
      & \sira
      f_{!} (\mathcal{A} \otimes^{\op{L}} f^\ast \mathcal{B}),\\
      \op{R}\!\!\sheafHom(f_! \mathcal{A}, \mathcal{B}) 
      & \sila f_*
      \op{R}\!\! \sheafHom(\mathcal{A}, f^! \mathcal{B}),\\ 
      f^!\op{R}\!\!\sheafHom(\mathcal{B}, \mathcal{C}) 
      & \sila \op{R}\!\! \sheafHom (f^*\mathcal{B}, f^!
      \mathcal{C}). 
    \end{align*}
\end{enumerate}
\end{theorem}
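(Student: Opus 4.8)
The plan is to bootstrap from the bounded case (Theorem~\ref{t:VdD-bounded}) to the unbounded setting via the standard machinery of homotopically injective (K-injective) resolutions, available because $\Sh(Y)$ is a Grothendieck category (\ref{rem:ShX-Grothendieck}). First I would establish part~\eqref{enum:f-upper-shriek}. The key point is that $f_!\colon D(Y)\to D(X)$ preserves arbitrary direct sums: since $f_{(!)}$ has finite cohomological dimension $\leq d$, one computes $f_!$ using, for instance, resolutions by $f$-c-soft sheaves (truncating a Godement-type resolution as in the proof of~\ref{t:VdD-bounded}), and by~\ref{l:VTVT} together with~\ref{dSK} and~\ref{l:teL} the functor $f_{(!)}$ commutes with direct sums on $f_{(!)}$-acyclic objects; hence so does $f_!$. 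Because $D(X)$ is a well-generated (indeed compactly generated) triangulated category satisfying Brown representability, a direct-sum-preserving triangulated functor out of $D(Y)$ automatically admits a right adjoint $f^!$. Alternatively, and more in the spirit of the bounded proof, one extends the functor $f^{(!)}_{\mathcal K}$ to unbounded complexes, derives it using K-injective resolutions, and checks the adjunction isomorphism exactly as in the displayed diagram in the proof of~\ref{t:VdD-bounded}, now using that $\mathcal G\otimes^{\op L}\mathcal K = \mathcal G\otimes\mathcal K$ consists of $f_{(!)}$-acyclic sheaves (by~\ref{l:tensorflatfsoftfsoft} and~\ref{l:teL}) even for unbounded $\mathcal G$, so that $f_{(!)}(\mathcal G\otimes\mathcal K)$ computes $f_!\mathcal G$. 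This is where finite cohomological dimension is essential: it guarantees the bounded resolution $\mathcal K$ exists and that acyclicity is inherited on unbounded complexes without convergence problems.

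For part~\eqref{enum:comp}: that $f\circ g$ is separated locally proper is~\ref{rem:comp-locprop} and~\ref{l:locprop-to-sep} (or directly), and $(f\circ g)_{(!)}=f_{(!)}\circ g_{(!)}$ has cohomological dimension $\leq d_f + d_g$ by a Grothendieck spectral sequence argument, using~\ref{l:EDKW} to see $g_{(!)}$ sends $f\circ g$-acyclic things appropriately, or simply that $g_{(!)}$ of an injective is $f_{(!)}$-acyclic (proof of~\ref{t:iterated-derived-proper-image}). The isomorphism $(f\circ g)_!\sira f_!\circ g_!$ is the unbounded version of~\ref{t:iterated-derived-proper-image}, proved the same way since $g_{(!)}$ maps injectives to $f_{(!)}$-acyclics, and this persists for K-injective resolutions. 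The dual statement $(f\circ g)^!\sila g^!\circ f^!$ is then formal: it is the adjoint of the first, uniqueness of adjoints giving the canonical isomorphism.

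For part~\eqref{enum:properbasechange}: $g$ is separated locally proper by~\ref{sep-BC} and~\ref{l:BC-localprop}, and $g_{(!)}$ has cohomological dimension $\leq d$ because stalks of $g_!$ at points of $Z$ are computed, via~\ref{t:DeBaW}, as $\Gamma_!$ over fibers of $g$, which are (open subspaces of, or rather) the fibers of $f$ pulled back — more precisely $g^{-1}(z)\cong f^{-1}(p(z))$ — so the fiberwise cohomological dimension is bounded by that of $f_{(!)}$. The unbounded base-change isomorphism $p^\ast\circ f_!\sira g_!\circ q^\ast$ follows from the bounded one~\ref{t:DeBaW} by resolving an arbitrary object of $D(Y)$ by $f$-c-soft sheaves and using that $q^\ast$ preserves such resolutions (inverse image is exact and preserves c-softness fiberwise by~\ref{l:BC-localprop}-type reasoning) together with direct-sum/colimit compatibility; the second isomorphism $f^!\circ p_*\sila q_*\circ g^!$ is the right adjoint of the first, again formal. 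For part~\eqref{enum:one-map}, the first isomorphism is the unbounded derived projection formula: extend~\ref{t:ANNO} by resolving $\mathcal A$ by $f$-c-soft sheaves and $\mathcal B$ by flat sheaves, using~\ref{t:ProFor} and~\ref{l:teL}; the other two are then obtained by adjunction — taking $\op{R}\!\!\sheafHom(-,\mathcal B)$ of the projection formula isomorphism and using the $(f_!,f^!)$ and $(f^*,f_*)$ adjunctions, as in~\cite[Ch.~3]{KS}.

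The main obstacle I expect is the unbounded derived functor bookkeeping in part~\eqref{enum:f-upper-shriek}: one must verify carefully that $f^{(!)}_{\mathcal K}$ extended to $C(X)$, derived via K-injectives, is genuinely right adjoint to $f_!$ on all of $D(Y)$ — i.e.\ that the chain of isomorphisms in the bounded proof's diagram still holds when $\mathcal G$ is unbounded, which requires that $f_{(!)}(\mathcal G\otimes\mathcal K)\to f_!(\mathcal G\otimes^{\op L}\mathcal K)$ remains an isomorphism for unbounded $\mathcal G$. This is exactly where one invokes that $\mathcal K$ is a \emph{bounded} complex of flat $f$-c-soft sheaves, so that $\mathcal G\otimes\mathcal K$ is a complex of $f_{(!)}$-acyclics with acyclicity respected by totalization even in the unbounded range — the finiteness hypothesis on cohomological dimension does all the work of avoiding the usual pathologies of unbounded complexes. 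Everything else is a routine transcription of the bounded arguments, as the paper's introduction already anticipates.
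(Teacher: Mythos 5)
Your plan follows essentially the same route as the paper: part (a) is the bounded Verdier-duality argument with the truncated Godement resolution $\mathcal K$ redone for unbounded complexes via fibrant (h-injective) resolutions, with the finite-cohomological-dimension lemma \ref{l:compute-right-derived} doing exactly the work you identify; parts (b)--(d) likewise transcribe the bounded arguments, with the second isomorphism in each item obtained formally by adjunction and the Brown-representability route mentioned only as an aside, as in the paper. Three small corrections to your details: $D(X)$ is in general only well generated, not compactly generated (your parenthetical claim is false, though harmless since Brown representability for well-generated categories suffices); in (c) the bound on the cohomological dimension of $g_{(!)}$ needs the extra observation that every sheaf $\mathcal E$ on the fibre $W=f^{-1}(x)$ is of the form $q^{(\ast)}q_{(\ast)}\mathcal E$, so that derived base change for $f$ really does control $g_!\mathcal E$ for \emph{arbitrary} $\mathcal E$, not just restrictions from $Y$; and in (d) a termwise flat resolution of an unbounded $\mathcal B$ is not enough --- one must use h-flat complexes, e.g.\ Spaltenstein's $\underrightarrow{\mathfrak P}(X)$, and reduce by filtered colimits to $\DZ_{U\subset X}$ as the paper does.
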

\begin{Bemerkungl}
  Verdier duality for unbounded derived
  categories has been proved in \cite{spaltenstein} 
  for continuous maps $f$ between locally compact Hausdorff spaces
  that satisfy a condition that is slightly weaker than
  our condition on the cohomological dimension of $f_{(!)}$.
  It is also 
  possible to deduce it from 
  Brown representability for well generated 
  triangulated categories
  \cite[Thm.~1.17]{neeman-tricat}, 
  \cite[Thm.~0.2]{neeman-sheaves-manifold}.
\end{Bemerkungl}

\begin{proof}
  \ref{enum:f-upper-shriek}: The proof of Verdier
  duality in the bounded case given in~\ref{t:VdD-bounded} 
generalizes easily: When we omit all upper indices $+$ there,
 \ref{l:compute-right-derived}.\ref{enum:RF-for-fin-cohodim}
  and the quasi-isomorphism $\DZ_Y \ra \mathcal{K}$ 
  still show that the obvious morphisms
  \begin{equation*}
    f_{(!)}^\mathcal{K}(\mathcal{G}) = f_{(!)} (\mathcal{G}
    \otimes \mathcal{K}) 
    \ra 
    f_{!} (\mathcal{G} \otimes \mathcal{K})
    =f_{!} (\mathcal{G} \otimes^{\op{L}} \mathcal{K})
    \leftarrow f_{!} (\mathcal{G})
  \end{equation*}
  are isomorphisms in $D(X)$. 
Thus $f_{(!)}^{\mathcal{K}}$ maps acyclic complexes to
acyclic
complexes,
and this implies that the functor \begin{equation*}
    f^{(!)}_{\mathcal{K}}\colon  C(X)\ra C(Y)  
  \end{equation*}
maps h-injective complexes to h-injective complexes.
Apart from these additions, the argument is the same.
  
  It is enough to prove the first of the isomorphisms stated in
  each of the claims 
  \ref{enum:comp}, \ref{enum:properbasechange},
  \ref{enum:one-map} because the remaining isomorphisms are formal
  consequences using the Yoneda
  lemma and various adjunctions, most prominently 
  the Verdier duality adjunction $(f_!,f^!)$.

  \ref{enum:comp}: 
  The composition $f \circ g$ is separated
  locally proper by 
  \ref{rem:comp-locprop}
  and \ref{sep-comp}; moreover, $(f \circ g)_{(!)}$ has finite
  cohomological dimension by
  \ref{t:iterated-derived-proper-image}
  and \ref{c:range-f-shriek}.
  Let $\mathcal{F} \in C(Z)$ be a fibrant
  object. Then all components of $\mathcal{F}$ are injective
  sheaves by \ref{rem:compare-Beke-KS-cat-sh}.
  As in the proof of
  \ref{t:iterated-derived-proper-image} we see that all
  components of
  $g_{(!)}\mathcal{F}$ are $f_{(!)}$-acyclic.
  Lemma~\ref{l:compute-right-derived}.\ref{enum:RF-for-fin-cohodim}
  shows that the obvious morphism
  \begin{equation*}
    f_{(!)}(g_{(!)}\mathcal{F}) \ra f_!(g_{(!)}\mathcal{F})
  \end{equation*}
  is an isomorphism in $D(X).$
  Since $\mathcal{F}$ is h-injective 
  we have 
  $g_{(!)}\mathcal{F} \sira g_{!}\mathcal{F}$ in $D(Y)$
  and
  $f_{(!)}(g_{(!)}\mathcal{F})= (f \circ
  g)_{(!)}(\mathcal{F}) \sira 
  (f \circ
  g)_{!}(\mathcal{F})$ in $D(X)$ where
  \ref{t:DBE} is used for the equality.


  \ref{enum:properbasechange}: Clearly, $g$ is separated locally
  proper by \ref{sep-BC} and \ref{l:BC-localprop}.  Derived
  proper base change \ref{t:DeBaW} reduces the question whether
  $g_{(!)}$ has finite cohomological dimension to the case that
  $Z=\{x\}$ for some $x \in X$ and $W=f^{-1}(x).$ Then $q \colon
  W \ra Y$ is an embedding and any sheaf $\mathcal{E} \in \Sh(W)$
  satisfies $\mathcal{E} \sira q^{(\ast)}q_{(\ast)}\mathcal{E}=
  q^{\ast}q_{(\ast)}\mathcal{E}.$ This and derived proper base
  change again shows $ g_! \mathcal{E} \sira g_!  q^\ast
  q_{(\ast)}\mathcal{E} \sila p^\ast f_! q_{(\ast)}\mathcal{E} $.
  This shows that $g_{(!)}$ has finite cohomological dimension.

  Let $Z$ again be arbitrary. 
  Let $\mathcal{F} \in C(Y)$ be a fibrant object.
  As in the proof
  of \ref{t:DeBaW} we see that $q^{(*)}\mathcal{F}$ 
  consists of $g_{(!)}$-acyclic sheaves. 
  Then
  \begin{equation*}
    g_{(!)}(q^{(*)}\mathcal{F}) \sira 
    g_!(q^{(*)}\mathcal{F}) \sira g_!(q^{*}\mathcal{F})
  \end{equation*}
  in $D(Z)$ where the first isomorphism comes from 
  \ref{l:compute-right-derived}.\ref{enum:RF-for-fin-cohodim}
  and the second one from the isomorphism 
  $q^{(*)}\mathcal{F} \sira q^*\mathcal{F}$ in $D(W)$.
  Now use proper base change~\ref{t:BaWe}.

  \ref{enum:one-map}: 
  Following \cite[Prop.~6.18]{spaltenstein}
  we prove the derived
  projection formula.
  From \ref{NatT}
  we obtain a natural morphism 
  \begin{equation}
    \label{eq:proj-form}
    \tag{$\star$}
    (f_{(!)} \mathcal{A}) \otimes \mathcal{B} \rightarrow 
    f_{(!)} (\mathcal{A} \otimes f^{(\ast)} \mathcal{B})
  \end{equation}
  in $C(X).$ 
  Assume that 
  $\mathcal{A}$ is a complex of $f$-c-soft sheaves on $Y$ and that
  $\mathcal{B}$ is an object of
  $\underrightarrow{\mathfrak{P}}(X)$ in the
  notation of \cite[Sect.~5]{spaltenstein}.
  In particular, 
  $\mathcal{B}$ is h-flat, and $\mathcal{A}$ consists of
  $f_{(!)}$-acyclic sheaves  
  by \ref{l:teL}.
  Hence the left hand side of \eqref{eq:proj-form}
  computes 
  $(f_{!} \mathcal{A}) \otimes^{\op{L}} \mathcal{B}$
  by
  \ref{l:compute-right-derived}.\ref{enum:RF-for-fin-cohodim}.
  On the other hand, $f^{(\ast)}\mathcal{B}$ is certainly h-flat
  so that $\mathcal{A} \otimes^{\op{L}}
  f^{(\ast)}\mathcal{B}=\mathcal{A} \otimes
  f^{(\ast)}\mathcal{B}$. 
  We claim that 
  $\mathcal{A} \otimes f^{(\ast)}\mathcal{B}$
  has $f$-c-soft components.
  If $\mathcal{B}'$ is a bounded above complex of sheaves on $X$
  whose components are direct sums of
  sheaves of 
  the form 
  $\DZ_{U \subset X}$ with $U \co X$ open, i.\,e.\ $\mathcal{B}'
  \in \mathfrak{P}(X)$ in the notation of \cite{spaltenstein},
  then all components of  
  $\mathcal{A} \otimes f^{(\ast)}\mathcal{B}'$
  are $f$-c-soft as direct sums of $f$-c-soft sheaves 
  $\mathcal{A}^p \otimes f^{(*)}\mathcal{B}'^q$, by
  \ref{dSK} and \ref{t:ProFor}. Recall that
  $\underrightarrow{\mathfrak{P}}(X)$ is the closure of 
  $\mathfrak{P}(X)$ under certain filtered colimits (see
  \cite[2.9]{spaltenstein}) in $C(X)$.
  Since $(\mathcal{A} \otimes
  -)$ commutes with filtered colimits and $f^{(*)}$ commutes with
  all colimits, this implies our claim that 
  $\mathcal{A} \otimes f^{(\ast)}\mathcal{B}$
  has $f$-c-soft components
  (using \ref{dSK} again).
  Then \ref{l:compute-right-derived}.\ref{enum:RF-for-fin-cohodim}
  again shows that the right hand side 
  of 
  \eqref{eq:proj-form}
  computes
  $f_! (\mathcal{A} \otimes f^\ast \mathcal{B})$.

  Hence it is certainly enough to show that 
  \eqref{eq:proj-form}
  is an isomorphism. Since all the functors
  $f_{(!)}$, $\otimes$ and $f^{(*)}$ commute with filtered
  colimits, we can assume without loss of generality that
  $\mathcal{B} \in \mathfrak{P}(X)$ and even that $\mathcal{B}=
  \DZ_{U \subset X}$ for some open subset $U \co X$.
  But in this case  
  \eqref{eq:proj-form} is an isomorphism by
  \ref{t:ProFor}. 
  This establishes the derived projection formula.
\end{proof}
\begin{corollary}
  \label{c:range-f-shriek}
  In the setting of \ref{t:comp-bc-proj}
  we have 
  $f_!(D^{\geq 0}(Y)) \subset D^{\geq 0}(X)$ and
  $f_!(D^{\leq 0}(Y)) \subset D^{\leq +d}(X)$
  and
  $f^!(D^{\geq 0}(X)) \subset D^{\geq -d}(Y)$.
\end{corollary}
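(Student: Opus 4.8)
The plan is to deduce the two inclusions for $f_!$ directly from the fact that $f_{(!)}$ is left exact of finite cohomological dimension, and then to obtain the inclusion for $f^!$ formally from them by adjunction; recall that $f^!$ exists by \ref{t:comp-bc-proj}.\ref{enum:f-upper-shriek}. Write $d$ for the cohomological dimension of $f_{(!)}$, so that $\op{R}^pf_{(!)}\mathcal H=0$ for every sheaf $\mathcal H\in\Sh(Y)$ whenever $p<0$ or $p>d$, the lower bound coming from left exactness of $f_{(!)}$. For the inclusion $f_!(D^{\geq 0}(Y))\subset D^{\geq 0}(X)$: given $\mathcal F\in D^{\geq 0}(Y)$ I would replace it by $\tau_{\geq 0}\mathcal F$ and then by a bounded below complex of injective sheaves concentrated in degrees $\geq 0$; since injective sheaves are $f$-c-soft by \ref{ikww}, hence $f_{(!)}$-acyclic by \ref{l:teL}, the object $f_!\mathcal F$ is computed by applying $f_{(!)}$ degreewise to this complex, so it again lives in degrees $\geq 0$.

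For $f_!(D^{\leq 0}(Y))\subset D^{\leq d}(X)$ I would use the hyperderived functor spectral sequence
\begin{equation*}
  E_2^{p,q}=\op{R}^pf_{(!)}\bigl(\mathcal H^q(\mathcal F)\bigr)\Longrightarrow\mathcal H^{p+q}(f_!\mathcal F),
\end{equation*}
which is available once $f_!$ is expressed through a resolution by $f_{(!)}$-acyclic ($f$-c-soft) sheaves as in \ref{l:compute-right-derived}.\ref{enum:RF-for-fin-cohodim}. Its $E_2$-page is supported in the strip $0\leq p\leq d$, and for $\mathcal F\in D^{\leq 0}(Y)$ also in $q\leq 0$; hence no term contributes to total degree $n>d$, giving $\mathcal H^n(f_!\mathcal F)=0$ for such $n$. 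The uniform bound $d$ is exactly what makes the argument go through for unbounded $\mathcal F$: only finitely many $E_2$-terms contribute to each fixed total degree, so convergence needs no boundedness hypothesis on $\mathcal F$. This is the usual way-out estimate for a functor of finite cohomological dimension, compare \cite{spaltenstein}.

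Finally, for $f^!(D^{\geq 0}(X))\subset D^{\geq -d}(Y)$, recall that a complex $\mathcal C\in D(Y)$ lies in $D^{\geq -d}(Y)$ if and only if $\Hom_{D(Y)}(\mathcal F,\mathcal C)=0$ for every $\mathcal F\in D^{\leq -d-1}(Y)$. So fix $\mathcal G\in D^{\geq 0}(X)$ and $\mathcal F\in D^{\leq -d-1}(Y)$. Then $\mathcal F[-d-1]\in D^{\leq 0}(Y)$, so by the inclusion just proved $f_!(\mathcal F)[-d-1]=f_!\bigl(\mathcal F[-d-1]\bigr)\in D^{\leq d}(X)$, that is, $f_!\mathcal F\in D^{\leq -1}(X)$. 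The adjunction $(f_!,f^!)$ then gives
\begin{equation*}
  \Hom_{D(Y)}(\mathcal F,f^!\mathcal G)\cong\Hom_{D(X)}(f_!\mathcal F,\mathcal G)=0,
\end{equation*}
the last equality because there are no morphisms from $D^{\leq -1}(X)$ to $D^{\geq 0}(X)$. As $\mathcal F\in D^{\leq -d-1}(Y)$ was arbitrary, $f^!\mathcal G\in D^{\geq -d}(Y)$. The one point that needs a little care is the validity of the way-out estimate for $f_!$ on unbounded complexes, but this is precisely what \ref{l:compute-right-derived}.\ref{enum:RF-for-fin-cohodim} provides; everything else is bookkeeping with truncations and the $(f_!,f^!)$-adjunction, so I do not anticipate a genuine obstacle.
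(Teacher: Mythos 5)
Your first inclusion and your deduction of $f^!(D^{\geq 0}(X)) \subset D^{\geq -d}(Y)$ from the middle one via the $(f_!,f^!)$-adjunction and the orthogonality characterization of $D^{\geq -d}(Y)$ are both correct and agree with the paper, which likewise settles the first claim by a bounded below injective resolution in degrees $\geq 0$ and disposes of the third claim by adjunction.

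The gap is in the middle inclusion $f_!(D^{\leq 0}(Y)) \subset D^{\leq d}(X)$. For $\mathcal F$ bounded your spectral-sequence argument is standard, but for $\mathcal F \in D^{\leq 0}(Y)$ unbounded below the hyperderived spectral sequence $E_2^{p,q}=\op{R}^p\! f_{(!)}(\mathcal H^q(\mathcal F))$ is exactly the object whose existence and convergence require justification, and ``only finitely many terms in each total degree'' is not by itself a convergence proof: the filtration on the abutment coming from the truncation tower need not be exhaustive or complete, and the derived category of sheaves on a general space is not left-complete, so $\mathcal F$ need not be recovered from its truncations $\tau_{\geq -m}\mathcal F$. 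Lemma~\ref{l:compute-right-derived}.\ref{enum:RF-for-fin-cohodim}, which you cite as the fix, does not produce this spectral sequence; it only says that $\op{R}\! F$ may be computed by applying $F$ to a componentwise $F$-acyclic complex. To get the degree bound from that lemma you still need an $f_{(!)}$-acyclic replacement of $\mathcal F$ that is itself concentrated in degrees $\leq d$, and producing one is the actual content of the step. The paper supplies it explicitly: in the proof of \ref{t:comp-bc-proj}.\ref{enum:f-upper-shriek} (carried over from \ref{t:VdD-bounded}) one has $f_!(\mathcal G)\cong f_{(!)}(\mathcal G\otimes\mathcal K)$ in $D(X)$, where $\mathcal K$ is the truncated Godement resolution of $\DZ_Y$, a complex of flat $f$-c-soft sheaves concentrated in degrees $0,\dots,d$; for $\mathcal G\in C^{\leq 0}(Y)$ the complex $\mathcal G\otimes\mathcal K$ has components in degrees $\leq d$ that are $f$-c-soft by \ref{l:tensorflatfsoftfsoft}, hence $f_{(!)}$-acyclic by \ref{l:teL}, and the bound is immediate. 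Replacing your spectral-sequence paragraph by this one observation closes the gap; the rest of your argument then goes through unchanged.
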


\begin{proof}
  The first claim
  holds (for any continuous map
  $f$)
  because any object of $D^{\geq 0}(X)$ is isomorphic to a complex
  of injective sheaves whose components in degrees $<0$ are zero.
  We have seen in the above proof that $f_!(\mathcal{G}) \cong
  f_{(!)}^\mathcal{K}(\mathcal{G})$ in $D(X)$. Hence
  $f_!(D^{\leq 0}(X)) \subset D^{\leq +d}(Y)$.
  Then $f^!(D^{\geq 0}(X)) \subset D^{\geq -d}(Y)$
  follows from the adjunction $(f_!, f^!)$ or from the explicit
  construction of the functor $f^!$.

\end{proof}

\section{Reminders from topology}
\label{sec:reminders}

\label{sec:topology}


\begin{Bemerkungl}\label{DefE}
  A map $f \colon Y \ra X$ of
  topological spaces is called \textbf{proper} if it is
  continuous and  
  the map
  $f \times \op{id} \colon Y \times Z \ra X
  \times Z$ is closed for each topological space $Z.$
  All properties of proper maps we
  mention in the following can be found in
  \cite[I.\S~10]{BouTo14-french} or
  \cite[\href{http://stacks.math.columbia.edu/tag/005M}{Section~005M}]{stacks-project}.
\end{Bemerkungl}

\begin{Bemerkungl}
  A topological space $X$ is compact if and only if the constant
  map from $X$ to a space consisting of a single point is proper.
  \label{KrE}
\end{Bemerkungl}

\begin{Bemerkungl}
  \label{VSU}  
  An embedding is
  proper if and only if it is closed.   
  \label{comp-proper}
  Every composition of proper maps is proper.
\end{Bemerkungl}

\begin{Bemerkungl}
  \label{rem:prop-BC}
  The property of being proper is stable under base change:
  given a proper map $Y \ra X$ and an
  arbitrary continuous map $Z\ra X$ the map $Z \times_X Y \ra Z$
  is proper. 
  In
  particular, a map is proper if and only if it is universally
  closed. There are two other important characterizations of
  proper maps.
  A continuous map is proper if
  and only if 
  it is closed and the inverse image
  of every compact subset is compact.
  A continuous map is proper if
  and only if 
  it is closed and all its fibers are compact. 
\end{Bemerkungl}

\begin{Bemerkungl}
  Properness is local on the target:
  let $f \colon Y \ra X$ be a map and $\mathcal{U}$ an open
  covering  of $X$.  
  Then $f$ is proper if and only if the induced maps 
  $f^{-1}(U) \ra U$ are proper for all $U \in
  \mathcal{U}$.\label{prop-localtarget}
  Note however that properness is not local on the source
  (cf.\ \ref{locprop-localsource} for our usage of this
  notion). 
\end{Bemerkungl}

\begin{Bemerkungl}
  If a
  composition $f \circ g$ of continuous maps is proper and $g$ is
  surjective, then 
  $f$ is proper.\label{EWEi} 
\end{Bemerkungl}

\begin{Bemerkungl}\label{VUAa}
  If $Y\ra X$ and $Y'\ra X$ are proper maps, so is $(Y\amalg
  Y')\ra X$. In particular, given a continuous map $Z\ra X$ and
  subspaces 
  $Y,$ $Y'\subset Z$ with $Y\ra X$ and $Y'\ra X$ proper,
  the map $Y\cup Y' \ra X$ is proper by \ref{EWEi}.
\end{Bemerkungl}

\begin{Definition}
  \label{Msep}
  A map $f\colon  Y \ra X$ of topological spaces is
  called \textbf{separated} if it is continuous and the diagonal
  map $Y \ra Y \times_{X} Y$ is a closed embedding. The second
  condition is satisfied if and only if 
  the diagonal is a closed subset of $Y \times_{X} Y.$
\end{Definition}

\begin{Definition}\label{RHa}
  A subset $A$ of a topological space $Y$ is called
  \defind{relatively Hausdorff}
  if any two distinct points of $A$ have disjoint neighborhoods
  in $Y.$
\end{Definition}

\begin{Bemerkungl}
  \label{SRH}
  A continuous map is separated if and only if all its fibers are
  relatively Hausdorff.
\end{Bemerkungl}

\begin{Bemerkungl}
  The constant map from a topological space $X$ to a space
  consisting of a single point is separated if and only if $X$ is
  a Hausdorff space. 
  Every embedding is separated.
  Every composition of separated maps is separated.
  \label{sep-comp}
  \label{sep-BC}
  \label{sep-local-target}
  The property of being separated is stable under base change and
  local on the target, but not local on the source.
  If a composition $f \circ g$ of continuous maps is separated,
  $g$ is separated. In particular, any continuous map whose
  source is a
  Hausdorff space is separated.
\end{Bemerkungl}

\begin{Lemma}
  \label{l:SaA}
  If a composition $f \circ g$ of continuous maps is proper
  and $f$ is separated, then $g$ is proper.
\end{Lemma}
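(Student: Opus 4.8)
The plan is to factor $g$ through the fibre product $Z\times_X Y$ built from $f\circ g\colon Z\ra X$ and $f\colon Y\ra X$, and then to check that both maps in that factorisation are proper; $g$ will then be proper by \ref{VSU}, since a composition of proper maps is proper. Concretely, write $\gamma\colon Z\ra Z\times_X Y$ for the graph map $z\mapsto (z,g(z))$ — note $(f\circ g)(z)=f(g(z))$, so this lands in the fibre product — and let $\op{pr}_Y\colon Z\times_X Y\ra Y$ be the second projection. Then $g=\op{pr}_Y\circ\gamma$.

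The projection $\op{pr}_Y$ is, by construction, the base change of $f\circ g$ along $f$; since $f\circ g$ is proper, $\op{pr}_Y$ is proper by \ref{rem:prop-BC}. For $\gamma$, the point where the separatedness of $f$ enters is the observation that the square
\[
\xymatrix{
  Z \ar[r]^-{\gamma} \ar[d]_{g} & Z \times_X Y \ar[d]^{g \times \op{id}} \\
  Y \ar[r]^-{\Delta_f} & Y \times_X Y
}
\]
is cartesian, where $g\times\op{id}$ sends $(z,y)$ to $(g(z),y)$ and $\Delta_f$ is the diagonal of $f$. Indeed, a point of $Y\times_{Y\times_X Y}(Z\times_X Y)$ is a pair $\big(y,(z,y')\big)$ with $(y,y)=(g(z),y')$, which forces $y'=y=g(z)$, so such a pair is determined by $z\in Z$; one checks the resulting continuous bijection $Z\ra Y\times_{Y\times_X Y}(Z\times_X Y)$ is a homeomorphism because its inverse is induced by $\op{pr}_Z$. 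Since $f$ is separated, $\Delta_f$ is a closed embedding, hence proper by \ref{VSU}, so its base change $\gamma$ is proper by \ref{rem:prop-BC}.

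Combining the two, $g=\op{pr}_Y\circ\gamma$ is a composition of proper maps and therefore proper. I expect the only step needing any care to be the verification that the displayed square is cartesian (really just the topology on the fibre products), and even that is routine; there is no genuine obstacle here, as the statement is a formal consequence of the stability of properness under base change and composition together with the definition of separatedness.
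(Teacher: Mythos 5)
Your proof is correct and is essentially the paper's own argument: both factor $g$ as the graph map $Z\ra Z\times_X Y$ (a base change of the diagonal $\Delta_f$, hence a closed embedding and thus proper) followed by the projection $Z\times_X Y\ra Y$ (a base change of the proper map $f\circ g$), and conclude by composing proper maps. The cartesian square you verify is exactly the one displayed in the paper's proof, just transposed.
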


\begin{Bemerkungl}
  This generalizes \cite[Prop.~I.10.1.5.(c)]{BouTo14-french}.
\end{Bemerkungl}

\begin{Bemerkungl}
  If the target of $f$ consists of a single point, this lemma
  specializes to the fact that a continuous map from
  a compact space to a Hausdorff space is closed and even
  proper. 
\end{Bemerkungl}

\begin{proof}
  Let $g\colon Z\ra Y$ and $f\colon Y\ra X$. Consider the
  cartesian diagram
  $$
  \xymatrix{ \kart Z\ar[r]^-{g} \ar[d]_-{(\op{id},g)} & Y
    \ar[d]^-{(\op{id},\op{id})} \\
    Z\times_X Y \ar[r]^-{g \times \op{id}} & Y\times_X Y }
  $$
  of topological spaces.  The diagonal map $Y \hookrightarrow Y
  \times_X Y$ is a closed embedding and hence so is the map
  $(\op{id},g) \colon Z \ra Z \times_X Y$ obtained by base
  change using \ref{sep-BC}.
  The morphism $g$ is
  the composition of this latter map with the proper map $Z
  \times_X Y \ra Y$, using \ref{comp-proper} and \ref{rem:prop-BC}.
\end{proof}

\section{Reminders from sheaf theory}
\label{sec:sheaf-theory}

\begin{Proposition}
  [{\bf Extension of sections on compacta}]
  \label{p:FSK}
  Let $\mathcal{F}$ be a sheaf of sets  on a
  topological space $X.$
  If $K \subset X$ is a compact relatively Hausdorff subset of
  $X,$ any section
  $s \in \mathcal{F}(K)$ is the restriction of a
  section of $\mathcal{F}$ on an open neighborhood of $K$ in $X.$
\end{Proposition}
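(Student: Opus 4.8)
The plan is to reduce the statement to a standard ``patching sections over a compact set'' argument, exploiting the relative Hausdorff hypothesis to control overlaps. First I would pass to the étale space $p\colon \bar{\mathcal F}\to X$ associated to $\mathcal F$, so that a section $s\in\mathcal F(K)$ becomes a continuous map $s\colon K\to\bar{\mathcal F}$ with $p\circ s=\mathrm{id}_K$. For each point $k\in K$ the germ $s(k)$ lies in some stalk, and by definition of the étale topology there is an open subset $\Omega_k\subset\bar{\mathcal F}$ containing $s(k)$ on which $p$ restricts to a homeomorphism onto an open subset $p(\Omega_k)\subset X$; write $t_k\colon p(\Omega_k)\to\bar{\mathcal F}$ for the corresponding local section of $\bar{\mathcal F}$ over the open neighborhood $p(\Omega_k)$ of $k$. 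By continuity of $s$ we may shrink $p(\Omega_k)$ so that $s$ and $t_k$ agree on $p(\Omega_k)\cap K$.

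Next I would extract a finite subcover: the open sets $p(\Omega_k)$ cover $K$, and $K$ is compact, so finitely many $p(\Omega_{k_1}),\dots,p(\Omega_{k_n})$ suffice. The issue is that the local sections $t_{k_i}$ need not agree on the overlaps $p(\Omega_{k_i})\cap p(\Omega_{k_j})$ away from $K$; this is where the relative Hausdorff hypothesis enters, and I expect it to be the main obstacle. On $p(\Omega_{k_i})\cap p(\Omega_{k_j})\cap K$ the two sections $t_{k_i},t_{k_j}$ agree (both equal $s$ there). The set where two given continuous sections of an étale space agree is open, but in general not closed; however, using that $K$ is compact and relatively Hausdorff one shows that the diagonal of $\bar{\mathcal F}\times_X\bar{\mathcal F}$ is ``closed enough'' near the image of $s(K)$: concretely, for each pair $(i,j)$ and each point $x\in p(\Omega_{k_i})\cap p(\Omega_{k_j})\cap K$ the fiber is relatively Hausdorff, so $t_{k_i}(x)=t_{k_j}(x)$ together with continuity lets us find an open neighborhood $N_x$ of $x$ on which $t_{k_i}=t_{k_j}$; covering $K$ by finitely many such $N_x$ and intersecting, we can shrink all the $p(\Omega_{k_i})$ simultaneously so that any two of the (finitely many) local sections agree wherever both are defined.

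Once the finitely many local sections $t_{k_1},\dots,t_{k_n}$ pairwise agree on overlaps, they glue to a single continuous section $t$ of $\bar{\mathcal F}$ over the open set $U:=\bigcup_i p(\Omega_{k_i})$, which is an open neighborhood of $K$ in $X$. By construction $t$ restricts to $s$ on $K$, so $t\in\mathcal F(U)$ is the desired extension, completing the proof. The routine verifications (that shrinking finitely many opens while keeping $K$ covered is possible, that a glued map of étale spaces is continuous, and that agreement on a set containing $K$ suffices after re-shrinking) are standard point-set manipulations; the conceptual content is entirely in the previous paragraph, namely turning the fiberwise relative Hausdorff property plus compactness of $K$ into an honest agreement of local sections on overlaps.
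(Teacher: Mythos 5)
Your first two steps (passing to the \'etale space, producing local sections $t_{k_i}$ over open sets $p(\Omega_{k_i})$ that agree with $s$ on $p(\Omega_{k_i})\cap K$, and extracting a finite subcover) match the paper's setup. The gap is in the patching step. Two continuous sections of an \'etale space always agree on an \emph{open} set, so the neighborhoods $N_x$ you produce for $x\in p(\Omega_{k_i})\cap p(\Omega_{k_j})\cap K$ exist for free, with no Hausdorff hypothesis; your appeal to relative Hausdorffness there is doing no work. Moreover these $N_x$ only control the overlap near $K\cap p(\Omega_{k_i})\cap p(\Omega_{k_j})$. The real obstruction sits in the part of $p(\Omega_{k_i})\cap p(\Omega_{k_j})$ that accumulates at points of $K$ met by only one of the two charts: such points cannot be removed by shrinking the $p(\Omega_{k_i})$ while keeping $K$ covered, and ``covering $K$ by finitely many $N_x$ and intersecting'' does not touch them, since for a fixed pair $(i,j)$ the $N_x$ only cover $K\cap p(\Omega_{k_i})\cap p(\Omega_{k_j})$, not $K$. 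Because your argument never genuinely uses the hypothesis, it would equally ``prove'' the statement for the line with doubled origin, $K$ the two origins, and $\mathcal{F}$ the constant sheaf $\mathbb{Z}$: there the section $(1,2)\in\mathcal{F}(K)$ does not extend to any open neighborhood (every neighborhood of $K$ puts both origins in one connected component), the triple intersections with $K$ are empty so there are no $N_x$ at all, and the two charts overlap in a punctured interval, meeting every neighborhood of $K$, on which the constant sections $1$ and $2$ disagree.

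The missing idea is to separate, inside $X$, the disjoint compact pieces of $K$ served by different charts; this is exactly where compactness plus relative Hausdorffness must enter, via the fact that two disjoint compact subsets of $K$ admit disjoint open neighborhoods in $X$. Concretely, for two charts as in the paper's proof: choose compact sets $K_i\subset K\cap U_i$ covering $K$, let $W$ be an open set containing $K_1\cap K_2$ on which $s_1=s_2$ (this is the automatic openness noted above), separate the compacta $K_1\setminus W$ and $K_2\setminus W$ by disjoint opens $U_1'\subset U_1$ and $U_2'\subset U_2$, and glue $s_1|_{U_1'}$, $s_2|_{U_2'}$ and $s_1|_W=s_2|_W$ over $U_1'\cup U_2'\cup W$; then induct on the number of charts. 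Without this separation step the gluing you propose cannot be carried out.
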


\begin{proof}
  Let $s \in \mathcal{F} (K).$
  For any $x \in K$ there is an open neighborhood $U_x$ of $x$ in
  $X$ and a section $s_x \in \mathcal{F}(U_x)$ such that $s_x|_{K
    \cap U_x}= s|_{K \cap U_x}.$ 
  Since $K$ is locally compact as a compact Hausdorff space,
  there is a compact neighborhood $K_x$ of $x$ in $K$ that is
  contained in $K \cap U_x.$
  Since $K$ is compact, finitely many $K_{x_1}, \dots, K_{x_n}$
  cover $K.$ Put $K_i:=K_{x_i},$ $U_i :=U_{x_i}$ and $s_i:=s_{x_i}.$
  We have $s_{i}|_{K_{i}} = s|_{K_{i}}$.

  If $n=1$ we are done. Otherwise
  compactness of $K_1 \cap K_2$ yields an open
  neighborhood $W$ of $K_1 \cap K_2$ in $U_1 \cap U_2$
  with $s_1|_W = s_2|_W$.
  Moreover,
  there are  
  disjoint open neighborhoods
  $U_i^{\prime} \co U_i$ of $K_i \setminus W$
  for $i=1,$ $2$. Here we
  use that $K$ is relatively Hausdorff and that $K_i \setminus
  W$ is compact.
  Then the three sections 
  $s_1|_{U^{\prime}_1},$ $s_2|_{U^{\prime}_2}$ and $s_1|_W = s_2|_W$
  glue to a section on
  $U^{\prime}_1 \cup U^{\prime}_2 \cup W$
  which extends $s|_{K_1\cup K_2}.$
  An easy induction finishes the proof.
\end{proof}

\begin{Lemma}
  \label{l:VTDLa}  
  On a locally compact Hausdorff space $X,$
  taking global sections with compact support commutes
  with filtered colimits,
  i.\,e.\
  \begin{displaymath}
    \varinjlim \big(\Gamma_{!} \mathcal{F}_i \big)
    \sira
    \Gamma_{!}
    \big(\varinjlim \mathcal{F}_i\big)
  \end{displaymath}
  for any filtered diagram $\mathcal{F} \colon  I \ra \Sh(X)$.
\end{Lemma}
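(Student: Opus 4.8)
The plan is to show that the natural map $\theta\colon\varinjlim_i\Gamma_!(\mathcal{F}_i)\to\Gamma_!(\mathcal{F})$, where $\mathcal{F}:=\varinjlim_i\mathcal{F}_i$, is both injective and surjective. Throughout I would use three standard facts: the stalk $\mathcal{F}_x$ is the filtered colimit $\varinjlim_i(\mathcal{F}_i)_x$; two sections of a sheaf with the same germ at a point agree on a neighbourhood; and, $X$ being locally compact Hausdorff, every compact subset has arbitrarily small compact neighbourhoods, and every cover of a compact set by relatively compact opens can be shrunk to one whose members still have compact closures contained in the original members.

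For injectivity I would take $s\in\Gamma_!(\mathcal{F}_i)$ with compact support $K$ whose image in $\Gamma_!(\mathcal{F})$ vanishes on all stalks. At each $x\in K$ the germ of $s$ already dies in $(\mathcal{F}_{j(x)})_x$ for some $j(x)\geq i$, hence the image of $s$ in $\mathcal{F}_{j(x)}$ vanishes on some open $U_x\ni x$. Choosing $j$ above $i$ and finitely many $j(x_k)$ with $\bigcup_k U_{x_k}\supseteq K$, the image of $s$ in $\mathcal{F}_j(X)$ vanishes on each $U_{x_k}$ and on $X\setminus K$, hence is $0$ by the sheaf axiom; so $s$ represents $0$ in the colimit.

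For surjectivity, let $t\in\Gamma_!(\mathcal{F})$ have compact support $L$ and pick an open $V\supseteq L$ with $\bar V$ compact. Near each point of $\bar V$ the section $t$ lifts to some $\mathcal{F}_{i(x)}$: using $\mathcal{F}_x=\varinjlim_i(\mathcal{F}_i)_x$ near points of $L$ and shrinking so that the lift genuinely restricts to $t$, and using the zero section near points of $\bar V\setminus L$. Shrinking to a finite cover $W_1,\dots,W_n$ of $\bar V$ by relatively compact opens with $\bar W_k$ inside the $k$-th lifting neighbourhood, and passing to one index above all $i(x_k)$, I obtain local lifts $\tau_k$ of $t$ to a single $\mathcal{F}_{j_0}$, defined over opens containing the $\bar W_k$. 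On the compact overlaps $\bar W_k\cap\bar W_l$ the $\tau_k$ have common image $t$, hence become equal after a finite further step; taking a common index $j_1$ over the finitely many pairs, the pushed lifts agree on $W_k\cap W_l$ and glue to a section $\tau$ over $W:=\bigcup_k W_k$ lifting $t|_W$. Since $t$ vanishes on the compact set $\bar V\setminus V$, the image of $\tau$ in some $\mathcal{F}_{j_2}$ with $j_2\geq j_1$ vanishes on an open neighbourhood $M$ of $\bar V\setminus V$ inside $W$; gluing this pushed $\tau$ (over $V\cup M$) with the zero section (over $X\setminus\bar V$) produces an $s\in\Gamma_!(\mathcal{F}_{j_2})$ with support inside $\bar V$ and image $t$.

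The one genuine difficulty is that $\varinjlim_i\mathcal{F}_i(U)\to\mathcal{F}(U)$ is not an isomorphism for a general open $U$: germs of $t$ lift to the $\mathcal{F}_i$, but the lifts may fail to agree on overlaps and may fail to vanish where $t$ does, only at an index that a priori depends on the point. The resolution — and the place where local compactness is essential — is that the sets on which these discrepancies must be controlled (the finitely many overlaps $\bar W_k\cap\bar W_l$ and the boundary set $\bar V\setminus V$) are compact, so each discrepancy can be repaired at a finite stage of the filtered system and a common upper bound taken; compactness also keeps the support of the final section compact. Alternatively, this is classical and may simply be quoted from the literature on sheaf cohomology with compact supports on locally compact Hausdorff spaces, e.g.\ \cite{KS}.
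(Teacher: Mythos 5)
Your proposal is correct and follows essentially the same route as the paper: injectivity via a finite cover of the compact support, and surjectivity by lifting locally over a relatively compact neighbourhood of the support, repairing the finitely many discrepancies on compact overlaps at a finite stage of the filtered system, and killing the lift near the compact boundary before gluing with zero. The only organizational difference is that the paper first proves surjectivity for compact Hausdorff $X$ and then reduces the locally compact case to it via $\ol{U}$, whereas you run the two steps together.
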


\begin{proof}
  We use in the proof that the colimit of a system of sheaves 
  is the sheafification of the colimit in the category of
  presheaves and that the canonical morphism from a presheaf to
  its 
  sheafification induces isomorphisms on all stalks.
  We can assume without loss of generality that $I$ is a
  partially ordered set.  

  Injectivity:
  Let
  $s\in \Gamma_{!}(X; \mathcal{F}_j)$ for some $j\in
  I$. Assume that $s$ goes to zero in $\Gamma_!(X; \varinjlim
  \mathcal{F}_i).$ For every point $x\in X$ there are an open
  neighborhood 
  $U(x)$ and an index $i(x)$
  with $s\mapsto 0 \in  \mathcal{F}_{i(x)}
  (U(x))$.
  Finitely many $U(x)$ cover the compact set $(\supp s).$ Let $i$
  be bigger than the finitely many $i(x)$ involved.
  Then $s\mapsto 0 \in  \mathcal{F}_{i} (X)$ showing injectivity.

  Surjectivity for $X$ compact and Hausdorff:
  Let $s\in \Gamma(X; \varinjlim \mathcal{F}_i)$ be given.
  For every point $x\in X$ there are a neighborhood
  $U(x)$ in $X,$ an index $i(x)$ and a section $s(x)\in
  \mathcal{F}_{i(x)}(U(x))$ with $s(x)\mapsto s|_{U(x)}$. Since
  $X$ is locally compact we may assume that the neighborhoods
  $U(x)$ are compact. Since $X$ is compact there is a finite
  subset $E\subset X$ such that the $U(x)$ with $x\in E$
  cover $X.$
  Injectivity proved above shows that for any
  $x, y \in E$ there is some $j \in I$ such that the images 
  of $s(x)$ and
  $s(y)$ in $\mathcal{F}_{j}(U(x)\cap U(y))$
  coincide. Since $E$ is finite we can assume that the index $j$
  works 
  for all $x, y \in E.$
  But then the images of the $s(x)$ in $\mathcal{F}_j(U(x))$ for
  $x \in E$ glue to a global section of $\mathcal{F}_j$
  which represents the 
  inverse image of $s$ we are looking for.
  
  Surjectivity for $X$ locally compact and Hausdorff:
  Let $s\in
  \Gamma_{!}(X; \varinjlim \mathcal{F}_i)$ be given. Since
  $(\supp s)$ is compact and $X$ is locally compact and Hausdorff
  we find
  $U\subset X$ open with compact closure $\ol{U}$ and
  $(\supp s)\subset U$. 
  We have already proved surjectivity of
  $\varinjlim \big(\Gamma(\ol{U}; \mathcal{F}_i \big)
  \ra
  \Gamma\big(\ol{U};\varinjlim \mathcal{F}_i\big).$
  Hence there is an index $j$ and a section
  $\tilde s\in \Gamma(\ol{U}; \mathcal{F}_j)$
  with $\tilde s\mapsto s|\ol{U}$.
  Then $\tilde
  s|\partial\ol{U} \mapsto 0\in \Gamma(\partial \ol{U};
  \varinjlim \mathcal{F}_i)$ and injectivity proved above shows
  that 
  $\tilde s|\partial \ol{U} \mapsto 0\in \Gamma(\partial \ol{U};
  \mathcal{F}_l)$ for some $l\geq j$.
  Hence the image of $\tilde s$ in
  $\Gamma(\ol{U}; \mathcal{F}_l)$ and the zero section $0 \in
  \Gamma(X \setminus U; \mathcal{F}_l)$ glue to a global section
  $\hat s\in \Gamma_{!}( X; \mathcal{F}_l)$ with compact
  support and $\hat s\mapsto s.$ This proves the lemma.
\end{proof}

\begin{Definition}
  \label{d:cpt-coft}
  A sheaf is called \textbf{c-soft} (for compact-soft)
  if any section over a compact set comes from a global section.
\end{Definition}

\begin{Lemma}[{\bf Filtered colimits of c-soft sheaves}]
  \label{l:LKWG}  
  On a locally compact Hausdorff space $X,$ any filtered colimit
  of c-soft sheaves is c-soft.
\end{Lemma}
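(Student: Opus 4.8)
The plan is to deduce the claim from Lemma~\ref{l:VTDLa}, applied not to $X$ but to the compact set over which a section is given. So let $\mathcal{F}\colon I\ra\Sh(X)$ be a filtered diagram whose terms $\mathcal{F}_i$ are all c-soft, let $K\co X$ be compact, and let $s\in(\varinjlim_i\mathcal{F}_i)(K)$; the goal is to produce a global section of $\varinjlim_i\mathcal{F}_i$ restricting to $s$ on $K$.

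First I would note that restriction of sheaves to the subspace $K$ is an inverse image functor and hence commutes with all colimits, so there is a canonical isomorphism $(\varinjlim_i\mathcal{F}_i)|_K\cong\varinjlim_i(\mathcal{F}_i|_K)$ in $\Sh(K)$. The space $K$, being compact Hausdorff, is locally compact Hausdorff, and on $K$ every support is compact so that $\Gamma_!(K;-)=\Gamma(K;-)$. Thus Lemma~\ref{l:VTDLa} applied to the space $K$ shows that $\Gamma(K;-)$ commutes with filtered colimits; combined with the previous isomorphism this means the canonical map $\varinjlim_i\mathcal{F}_i(K)\ra(\varinjlim_i\mathcal{F}_i)(K)$ is an isomorphism. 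In particular $s$ is the image of some section $s_j\in\mathcal{F}_j(K)$ for a suitable index $j$.

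Next I would invoke c-softness of $\mathcal{F}_j$: there is a global section $t_j\in\mathcal{F}_j(X)$ with $t_j|_K=s_j$. Letting $t$ be the image of $t_j$ under the structure map $\mathcal{F}_j(X)\ra(\varinjlim_i\mathcal{F}_i)(X)$, naturality of restriction shows that $t|_K$ is the image of $t_j|_K=s_j$ in $(\varinjlim_i\mathcal{F}_i)(K)$, which is $s$. Hence $s$ extends to a global section, and $\varinjlim_i\mathcal{F}_i$ is c-soft.

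The whole argument is essentially bookkeeping once Lemma~\ref{l:VTDLa} is in hand; the only step that deserves a moment's attention is the legitimacy of applying that lemma with $K$ in place of $X$, which is harmless because a compact Hausdorff space is locally compact Hausdorff and $\Gamma_!$ collapses to $\Gamma$ there (alternatively, one may cite directly the ``compact Hausdorff'' case that appears inside the proof of~\ref{l:VTDLa}). I do not anticipate any real obstacle; what little care is needed is in distinguishing colimits formed in $\Sh(X)$ from those formed in $\Sh(K)$.
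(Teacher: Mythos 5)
Your proof is correct and follows essentially the same route as the paper: both arguments use that restriction to the compact subset $K$ commutes with colimits, apply Lemma~\ref{l:VTDLa} on $K$ (where $\Gamma_!=\Gamma$) to descend the given section to some $\mathcal{F}_j(K)$, extend by c-softness of $\mathcal{F}_j$, and push the extension into the colimit. No issues.
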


\begin{proof}
  Let $\mathcal{F} \colon  I \ra \Sh(X)$ be a filtered diagram of c-soft
  sheaves. 
  The inverse image functor of sheaves commutes with arbitrary
  colimits because it has a right adjoint functor.
  Let $u \colon K\hra X$ be the embedding of a compact subset.
  Hence the obvious morphism is an isomorphism
  $\varinjlim
  u^{(\ast)}\mathcal{F}_i\sira u^{(\ast)}\varinjlim
  \mathcal{F}_i$. This and \ref{l:VTDLa}
  provide isomorphisms
  $$
  \varinjlim \Gamma(K;u^{(\ast)}\mathcal{F}_i)
  \sira 
  \Gamma(K;\varinjlim u^{(\ast)}\mathcal{F}_i)
  \sira
  \Gamma(K;u^{(\ast)}\varinjlim \mathcal{F}_i)$$
  or concisely an isomorphism
  $$\varinjlim \Gamma(K;\mathcal{F}_i)
  \sira 
  \Gamma(K;\varinjlim \mathcal{F}_i).$$
  Hence 
  every section $t$ of the group on the right
  is the image of a section $s\in
  \Gamma(K;\mathcal{F}_i)$ for some $i\in I$. Since
  $\mathcal{F}_i$ is c-soft there is a global section
  $\tilde s\in \Gamma(X;\mathcal{F}_i)$
  with $\tilde{s}|_K=s.$
  The image of $\tilde s$ in
  $\Gamma(X;\varinjlim \mathcal{F}_i)$ then restricts to $t.$
\end{proof}

\begin{Lemma}
  \label{l:ZFe}
  Let $\mathcal{F}$ be a sheaf on a topological space $X$ and
  $\mathcal{G}\mathcal{F}$ the 
  sheaf of not necessarily continuous sections of $\mathcal{F}$
  from \cite[II.4.3]{godement}.  If $\mathcal{F}$ is flat, so
  are $\mathcal{G}\mathcal{F}$ and the cokernel $\op{cok}
  (\mathcal{F} \hookrightarrow \mathcal{G} \mathcal{F})$ of the
  canonical monomorphism $\mathcal{F} \hookrightarrow \mathcal{G}
  \mathcal{F}.$
\end{Lemma}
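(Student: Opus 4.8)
The plan is to reduce both assertions to a statement about stalks, using that a sheaf of abelian groups is flat precisely when each of its stalks is a flat $\mathbb{Z}$-module, i.e.\ a torsion-free abelian group; this is legitimate because the tensor product of sheaves and exactness of sheaf sequences are both computed stalkwise, and over the PID $\mathbb{Z}$ flat is equivalent to torsion-free.

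First I would identify the stalks of $\mathcal{G}\mathcal{F}$. Since the presheaf $U\mapsto\prod_{y\in U}\mathcal{F}_y$ is already a sheaf, one has $(\mathcal{G}\mathcal{F})_x=\varinjlim_{U\ni x}\prod_{y\in U}\mathcal{F}_y$, a filtered colimit of products of the groups $\mathcal{F}_y$. If $\mathcal{F}$ is flat, each $\mathcal{F}_y$ is torsion-free; a product of torsion-free abelian groups is torsion-free, and a filtered colimit of torsion-free abelian groups is torsion-free; hence $(\mathcal{G}\mathcal{F})_x$ is torsion-free for every $x$, so $\mathcal{G}\mathcal{F}$ is flat.

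For the cokernel $\mathcal{Q}:=\operatorname{cok}(\mathcal{F}\hookrightarrow\mathcal{G}\mathcal{F})$ the decisive point — and the only one needing a genuine argument — is that the canonical monomorphism $\varepsilon\colon\mathcal{F}\hookrightarrow\mathcal{G}\mathcal{F}$ is split on each stalk. Fixing $x$, for every open $U\ni x$ the projection $\pi^U_x\colon\prod_{y\in U}\mathcal{F}_y\to\mathcal{F}_x$ onto the $x$-factor is compatible with the restriction maps, so these projections assemble into a morphism $p_x\colon(\mathcal{G}\mathcal{F})_x\to\mathcal{F}_x$; since $\varepsilon$ sends a section to the family of its germs, one checks immediately that $p_x\circ\varepsilon_x=\operatorname{id}_{\mathcal{F}_x}$. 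Thus the short exact sequence of stalks $0\to\mathcal{F}_x\xrightarrow{\varepsilon_x}(\mathcal{G}\mathcal{F})_x\to\mathcal{Q}_x\to0$ splits, so $\mathcal{Q}_x$ is isomorphic to a direct summand, in particular a subgroup, of the torsion-free group $(\mathcal{G}\mathcal{F})_x$, hence torsion-free. As $x$ was arbitrary, $\mathcal{Q}$ is flat.

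The main obstacle is precisely the flatness of the quotient: a quotient of torsion-free abelian groups need not be torsion-free, so the naive argument fails, and one really must exhibit the stalkwise splitting of $\varepsilon$ as above. Once that is in place everything else is routine bookkeeping with stalks of the Godement sheaf.
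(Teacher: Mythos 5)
Your argument is correct and coincides with the paper's own proof: reduce to stalks via flat $=$ torsion-free over $\DZ$, identify $(\mathcal{G}\mathcal{F})_x$ as a filtered colimit of products of the stalks $\mathcal{F}_u$, and use that $\mathcal{F}_x\hra(\mathcal{G}\mathcal{F})_x$ is split (via projection onto the $x$-factor) so that the cokernel stalk is a direct summand of a torsion-free group. You merely spell out the splitting more explicitly than the paper does.
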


\begin{proof}
  An abelian group is flat if and only if it is torsion-free, and
  these properties are preserved under products.
  For $x \in X$ we have
  \begin{equation*}
    (\mathcal{G}\mathcal{F})_x
    =
    \lim_{\xrightarrow[x \in U]{}}(\mathcal{G}\mathcal{F})(U)
    =
    \lim_{\xrightarrow[x \in U]{}} \prod_{u \in
      U}\mathcal{F}_u 
  \end{equation*}
  where $U$ ranges over the open subsets of $X$ containing $x.$
  Assume that $\mathcal{F}$ is flat. Then all $\mathcal{F}_u$ are
  flat abelian groups and it is easy to see that 
  $(\mathcal{G}\mathcal{F})_x$ is torsion-free and hence flat.
  Hence $\mathcal{G}\mathcal{F}$ is flat.
  Since $\mathcal{F} \hra \mathcal{G}\mathcal{F}$ induces split
  injections $\mathcal{F}_x \hra (\mathcal{G}\mathcal{F})_x$ on
  each stalk the cokernel has flat stalks.
\end{proof}

\section{Representability and adjoints}

\begin{Lemma}[{\textbf{Representability Lemma}}] 
  \label{l:represent}
  Let $X$ be a topological space.   
  A functor 
  $\mu \colon  \Sh(X) \ra \Ab^\opp$
  is representable if and only if it preserves all colimits.
\end{Lemma}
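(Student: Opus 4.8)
The statement to prove is the Representability Lemma: a functor $\mu\colon\Sh(X)\ra\Ab^\opp$ is representable if and only if it preserves all colimits. One direction is immediate: if $\mu\cong\Sh_X(-,\mathcal{R})$ for some $\mathcal{R}\in\Sh(X)$, then $\mu$ sends colimits in $\Sh(X)$ to limits in $\Ab$, i.e.\ colimits in $\Ab^\opp$, by the usual Yoneda/hom-functor computation. So the content is the converse, and the natural tool is the Special Adjoint Functor Theorem (or equivalently the Freyd representability criterion): a colimit-preserving functor out of a locally presentable (or merely complete, well-copowered, well-generated) category is representable provided a solution-set condition holds.

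**Key steps.** First I would record that $\Sh(X)$ is a Grothendieck abelian category: it is cocomplete, has a generator (e.g.\ the family of sheaves $\DZ_{U\subset X}$ for $U\co X$ open, or their coproduct), and filtered colimits are exact. In particular it is locally presentable, hence complete and well-copowered. Second, viewing $\mu\colon\Sh(X)\ra\Ab^\opp$ as a colimit-preserving functor into a complete, well-copowered category with a cogenerator, I would invoke the Special Adjoint Functor Theorem to conclude $\mu$ has a left adjoint $\lambda\colon\Ab^\opp\ra\Sh(X)$; equivalently, the functor $\Sh(X)\ra\Ab$, $\mathcal{F}\mapsto\mu(\mathcal{F})$ (now covariant into $\Ab$ but contravariant as a functor of $\mathcal{F}$) takes colimits to limits and one applies the representability form of SAFT. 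Third, unwinding the adjunction, $\mu(\mathcal{F})\cong\Hom_{\Ab^\opp}(\lambda(\DZ),\mu(\mathcal{F}))$-type manipulation is not quite what is wanted; cleaner is: a left adjoint to $\mu\colon\Sh(X)\ra\Ab^\opp$ amounts to a right adjoint to $\mu^\opp\colon\Sh(X)^\opp\ra\Ab$, and evaluating such a right adjoint at $\DZ\in\Ab$ produces the representing object $\mathcal{R}\in\Sh(X)$ with $\mu(\mathcal{F})\cong\Sh_X(\mathcal{F},\mathcal{R})$ naturally. Concretely, set $\mathcal{R}\pdef$ the value at $\DZ$ of the adjoint, and check the natural isomorphism $\Sh_X(\mathcal{F},\mathcal{R})\cong\mu(\mathcal{F})$ by chasing the adjunction units.

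**The main obstacle.** The one genuine point requiring care is verifying the solution-set / well-copoweredness hypothesis needed to apply SAFT, i.e.\ that $\Sh(X)$ really is locally presentable (equivalently, a Grothendieck category), so that there is only a set's worth of quotients of any object and the adjoint functor theorem applies without set-theoretic difficulties. This is standard — $\Sh(X)$ is the category of models of a limit sketch, or one cites Grothendieck/Gabriel — but it is the step that makes the argument work and should be stated explicitly. Once that is in place, the rest is a formal application of the adjoint functor theorem together with a routine unwinding of the resulting adjunction to extract the representing object; I would not belabor those calculations.
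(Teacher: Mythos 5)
Your proposal is correct, but it takes a genuinely different route from the paper. The paper's proof is explicit and constructive: it defines the candidate representing object directly by $\mathcal C_\mu(U):=\mu(\DZ_{U\subset X})$, verifies the sheaf axiom by applying $\mu$ to the coequalizer presentation $\bigoplus\DZ_{(U\cap U')\subset X}\rightrightarrows\bigoplus\DZ_{U\subset X}\ra\DZ_{V\subset X}$ associated to a covering, and then checks that $\mathcal C_\mu$ represents $\mu$ by a Yoneda argument together with the fact that both $\mu$ and $\Sh_X(-,\mathcal C_\mu)$ preserve colimits and agree on the generators $\DZ_{U\subset X}$. You instead invoke the (dual) Special Adjoint Functor Theorem, which is legitimate: $\Sh(X)$ is a Grothendieck category, hence cocomplete, co-wellpowered, and generated by the set $\{\DZ_{U\subset X}\}_{U\co X}$, so a functor $\mu\colon\Sh(X)\ra\Ab^\opp$ preserving colimits is representable. (One small caution: in your second step the SAFT hypotheses must be placed on the source $\Sh(X)$, equivalently on $\Sh(X)^\opp$ as a complete, well-powered category with a cogenerating set, not on the target $\Ab^\opp$; your third step and your "main obstacle" paragraph get this right.) What each approach buys: yours is shorter modulo the citation, but is non-explicit and makes the set-theoretic verification (co-wellpoweredness/local presentability) the crux; the paper's construction costs a page of checking but produces the concrete formula $\mathcal C_\mu(U)=\mu(\DZ_{U\subset X})$, which is exactly what is exploited later (see \ref{edc}) to describe the right adjoints $f^{(!)}_{\mathcal K}$ explicitly. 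Your remark that a colimit-preserving $\mu$ is automatically additive, so that set-theoretic representability upgrades to $\Ab$-enriched representability, would be worth making explicit, but it is routine.
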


\begin{Bemerkungl}
  We learnt the above lemma from \cite[Exp.~4, 1.0]{SHS}. Our proof
  completes the proof given there.
\end{Bemerkungl}

\begin{proof}
 It is clear that any representable functor commutes with
 colimits. Assume that $\mu$ is representable, i.\,e.\ there
 is a sheaf 
 $\mathcal C \in \Sh(X)$ 
 together with natural isomorphisms
 \begin{equation*}
   \mu (\mathcal F) \sira 
   \Sh_X(\mathcal F, \mathcal C)
 \end{equation*}
 for $\mathcal{F} \in \Sh(X)$. Plugging in
 $\mathcal{F}=\DZ_{U\subset X}$ for $U \co X$ open we obtain an
 isomorphism 
 $\mu (\DZ_{U \subset X}) 
 \sira \mathcal C (U)$.
 If $V \co U$ is an open subset, we obtain in this way the
 horizontal isomorphisms in the 
 commutative diagram
 \begin{equation*}
   \xymatrix{
     {\mu (\DZ_{U \subset X})} \ar[r]^-\sim \ar[d] 
     & {\mathcal{C}(U)} \ar[d]^-{\op{res}^V_U}
     \\ 
     \mu(\DZ_{V \subset X}) \ar[r]^-\sim  
     & {\mathcal{C}(V)}
   }
 \end{equation*}
 whose 
 left vertical morphism is the image under $\mu$ of the
 obvious morphism 
 $\DZ_{V \subset X} \rightarrow \DZ_{U \subset X}$. 

 Now assume that $\mu$ preserves all colimits. Define a presheaf
 $\mathcal C_\mu$ by
 \begin{equation*}
   \mathcal C_\mu (U) := \mu (\DZ_{U \subset X})
 \end{equation*}
 with restriction maps coming from the morphisms
 $\DZ_{V \subset X} \rightarrow \DZ_{U \subset X}$. 
 We first claim that $\mathcal{C}_\mu$ is a sheaf.
 Let $\mathcal{U}$ be an open covering of an open subset $V \co
 X$.
 Consider the obvious coequalizer diagram
 \begin{equation*}
   \bigoplus_{(U,U^\prime) \in \mathcal U^2} \mathbb Z_{(U \cap
     U^\prime)\subset X} 
   \begin{array}{c} \rightarrow\\[-2ex] 
     \rightarrow \end{array}
   \bigoplus_{U \in \mathcal U} \DZ_{U \subset X} 
   \rightarrow \DZ_{V \subset X}.
 \end{equation*}
 It presents $\DZ_{V \subset X}$ as a colimit. 
 Since $\mu$
 commutes with colimits we see that $\mathcal{C}_\mu$ is a
 sheaf.
 
 Now let us show that $\mathcal{C}_\mu$ represents $\mu$.
 Let $\op{Op}_X$ be the category of open subsets of $X$ with 
 inclusions as morphisms and let
 $J \colon \op{Op}_X \rightarrow  \Sh(X) $ be the functor
 $U  \mapsto  \DZ_{U \subset X}$.
 For any $\mathcal{F} \in \Sh(X)$ consider the morphisms
 \begin{equation*}
   \xymatrix{
     {\mu (\mathcal{F})} \ar[r]^-\sim
     & 
     {\op{Fun}_{(\Sh(X), \op{Ab}^{\opp})}(\mu, \Sh_X (-,
     \mathcal{F}))}  
     \ar[d]^-{\circ J}\\
     \Sh_X(\mathcal{F}, \mathcal{C}_\mu)
     &
     {\op{Fun}_{(\op{Op}_X, \op{Ab}^{\opp})} 
       (\mu \circ J, \Sh_X(J(-), \mathcal{F}))}
     \ar[l]_-\sim
   }
 \end{equation*}
 The upper horizontal morphism is the isomorphism from the Yoneda
 lemma, the 
 vertical morphism is obvious, and the lower horizontal morphism
 comes from 
 $\mu \circ J=\mathcal{C}_\mu$ and the isomorphism 
 $\Sh_X(J(-), \mathcal{F}) \sira \mathcal{F}(-)$.
 
 We obtain a morphism $\gamma \colon \mu \ra \Sh_X(-,
 \mathcal{C}_\mu)$ of functors $\Sh(X) \ra \Ab^\opp$.
 Since $\gamma$ is an isomorphism at each object of the form
 $\DZ_{U \subset X}$ and since both functors
 $\mu$ and $\Sh_X(-,
 \mathcal{C}_\mu)$ commute with colimits, $\gamma$ is an
 isomorphism and $\mathcal{C}_\mu$ represents $\mu.$
\end{proof}

\begin{corollary}[{\cite[Exp.~4, 1.1]{SHS}}]
  \label{c:rightadjointex}   
  Let $X$ be a topological space and $\mathcal{B}$ an abelian
  category. A functor $\Lambda \colon
  \Sh(X) 
  \ra \mathcal{B}$ has a right adjoint if and only if it 
  preserves colimits.
\end{corollary}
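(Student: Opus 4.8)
The plan is to reduce everything to the Representability Lemma~\ref{l:represent}. The ``only if'' direction is standard: any functor admitting a right adjoint preserves all colimits that exist in its source, so in particular $\Lambda$ does. The content is the ``if'' direction.

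So assume $\Lambda\colon\Sh(X)\ra\mathcal{B}$ preserves colimits. The idea is the usual pointwise criterion for existence of adjoints: for each object $B\in\mathcal{B}$ consider the composite
\begin{equation*}
  \mu_B\colon \Sh(X)\xra{\Lambda}\mathcal{B}\xra{\Hom_\mathcal{B}(-,B)}\Ab^\opp.
\end{equation*}
I would first check that $\mu_B$ preserves all colimits. Indeed, $\Sh(X)$ has all colimits, $\Lambda$ preserves them by hypothesis, and for any colimit in $\mathcal{B}$ the functor $\Hom_\mathcal{B}(-,B)$ turns it into the corresponding limit in $\Ab$, i.e.\ into a colimit in $\Ab^\opp$; composing these facts gives that $\mu_B$ sends colimits in $\Sh(X)$ to colimits in $\Ab^\opp$. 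By the Representability Lemma~\ref{l:represent}, $\mu_B$ is representable: there is an object $G(B)\in\Sh(X)$ and an isomorphism
\begin{equation*}
  \Hom_\mathcal{B}(\Lambda\mathcal{F},B)\sira\Sh_X(\mathcal{F},G(B))
\end{equation*}
natural in $\mathcal{F}\in\Sh(X)$.

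It then remains to upgrade $B\mapsto G(B)$ to a functor $\mathcal{B}\ra\Sh(X)$ and to see that the displayed isomorphism is natural in $B$ as well. This is the routine Yoneda bookkeeping: a morphism $B\ra B'$ induces a natural transformation $\mu_B\ra\mu_{B'}$, hence by the Yoneda lemma a unique morphism $G(B)\ra G(B')$ compatible with the representing isomorphisms; uniqueness forces $G$ to respect identities and composition, so $G$ is a functor, and the isomorphism above becomes natural in both variables. That natural isomorphism is precisely the adjunction $(\Lambda,G)$. The only step requiring any thought is the verification that $\mu_B$ preserves colimits, and that is immediate once one has $\ref{l:represent}$ together with the observation that $\Lambda$ preserves colimits; everything after that is formal, so I do not expect a genuine obstacle here — the corollary is essentially a repackaging of the Representability Lemma.
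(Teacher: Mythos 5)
Your proof is correct and follows exactly the paper's route: the ``only if'' direction is the standard fact that left adjoints preserve colimits, and the ``if'' direction applies the Representability Lemma~\ref{l:represent} to the functor $\mathcal{B}(\Lambda(-),B)$ for each fixed $B\in\mathcal{B}$. You merely spell out the colimit-preservation check and the Yoneda bookkeeping that the paper leaves implicit.
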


\begin{Bemerkungl}\label{edc} 
  Again, if we know a right adjoint $R$ exists, it is not difficult 
  to describe it: We just need to
  apply the adjunction isomorphism
  $\mathcal B(\Lambda \mathcal F, B)\sira \op{Sh}_X(\mathcal F, RB)$
  to the sheaf $\mathcal F=\DZ_{U\subset X}$ to find isomorphisms
  $\mathcal B(\Lambda \DZ_{U\subset X}, B)\sira (RB)(U)$ which
  describe the functor $R$ quite explicitly.
\end{Bemerkungl}

\begin{proof}
  Any left adjoint functor certainly preserves colimits.
  If $\Lambda$ preserves colimits, 
  apply 
  Lemma~\ref{l:represent} 
  to the functor
  $\mathcal{B} (\Lambda(-), B)$ 
  where $B \in \mathcal{B}$ is fixed.
\end{proof}

\section{Remarks on derived functors}

\begin{Bemerkungl}
  \label{rem:ShX-Grothendieck}
  We will apply the results of this subsection mainly to the 
  category
  $\Sh(X)$ of sheaves on a topological space $X$.
  This category is a Grothendieck (abelian) category by
  \cite[18.1.6.(v)]{KS-cat-sh}. We do not need the full strength
  of Proposition~\ref{p:model-structure-C-A} below; the
  results of \cite{KS-cat-sh} are sufficient for our purposes,
  cf.\ \ref{rem:compare-Beke-KS-cat-sh}.
\end{Bemerkungl}

\begin{Proposition}
  [{\cite[Prop.~3.13]{beke-sheafifiable-homotopy-model-categories}}]
  \label{p:model-structure-C-A}
  Let $\mathcal{A}$ be a Grothendieck category. Then
  there is a 
  cofibrantly generated model structure on
  $C(\mathcal{A})$
  such that the weak equivalences are the
  quasi-isomorphisms and the cofibrations are the monomorphisms.
  We call it the \textbf{injective} model structure on
  $C(\mathcal{A}).$ 
\end{Proposition}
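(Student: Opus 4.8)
The plan is to obtain the statement from J.~Smith's recognition theorem for combinatorial model categories; accordingly the work splits into setting up a locally presentable environment, producing a generating set of cofibrations, and checking Smith's hypotheses. By the Gabriel--Popescu theorem a Grothendieck category $\mathcal{A}$ is a reflective localization of a category of modules, hence locally presentable, and therefore $C(\mathcal{A})$ is locally presentable as well. Write $W$ for the class of quasi-isomorphisms in $C(\mathcal{A})$, and let $I$ be a \emph{set} of monomorphisms whose class of $I$-cofibrations (retracts of transfinite compositions of pushouts of maps in $I$) is exactly the class of \emph{all} monomorphisms of $C(\mathcal{A})$; the construction of such an $I$ is postponed to the last step. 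Smith's theorem then produces a cofibrantly generated model structure with weak equivalences $W$ and cofibrations the $I$-cofibrations, provided the following four conditions hold.

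The verification of these conditions is routine, and I would carry it out as follows. (i) $W$ satisfies the two-out-of-three property and is closed under retracts, which follows at once from the long exact cohomology sequence. (ii) Every morphism with the right lifting property against all maps in $I$, equivalently against all monomorphisms, is a degreewise epimorphism with contractible --- in particular acyclic --- kernel, hence a quasi-isomorphism; thus $\operatorname{inj}(I)\subset W$. (iii) The intersection of $W$ with the class of monomorphisms is closed under pushouts and transfinite compositions: pushouts and transfinite compositions of monomorphisms are again monomorphisms, the cokernel is unchanged under these operations, and since filtered colimits are exact in a Grothendieck category a filtered colimit of acyclic complexes is acyclic, so an acyclic cokernel remains acyclic. (iv) $W$ is an accessible, accessibly embedded subcategory of the arrow category $C(\mathcal{A})^{[1]}$: the mapping cone is a colimit-preserving functor $C(\mathcal{A})^{[1]}\to C(\mathcal{A})$, the acyclic complexes form an accessibly embedded accessible subcategory because each $H^{n}$ commutes with filtered colimits, and $W$ is the preimage of that subcategory. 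With all four in hand, Smith's theorem gives the model structure, whose cofibrations are exactly the monomorphisms and whose fibrations are the maps with the right lifting property against monomorphic quasi-isomorphisms.

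The essential point, and the only place where the full strength of the Grothendieck hypotheses enters, is the construction of the set $I$. Here one uses that $\mathcal{A}$ is well powered and possesses a generator $U$: up to isomorphism there is only a set of quotient objects of $U$, each such quotient has only a set of subobjects, so the monomorphisms $A'\hookrightarrow A$ with $A$ a quotient of $U$ form a set, and one lets $I$ consist of the maps of complexes obtained from this set by the standard disk-and-sphere functors $\mathcal{A}\to C(\mathcal{A})$. That every $I$-cofibration is a monomorphism is clear. The converse --- that an arbitrary monomorphism $X\hookrightarrow Y$ in $C(\mathcal{A})$ is built from $X$ by transfinitely attaching cells from $I$, the generator $U$ being used to reach every section of the cokernel --- is a transfinite small-object-type exhaustion and is the main obstacle of the proof. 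Once it is established, the four verifications above conclude the argument; for the purposes of the present paper one may of course simply cite \cite[Prop.~3.13]{beke-sheafifiable-homotopy-model-categories}.
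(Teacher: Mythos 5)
The paper offers no proof of this proposition at all: it is stated with the attribution to \cite[Prop.~3.13]{beke-sheafifiable-homotopy-model-categories} and the surrounding remarks (\ref{rem:ShX-Grothendieck}, \ref{rem:compare-Beke-KS-cat-sh}) make clear that the authors simply import the result. Your outline therefore cannot be compared with an internal argument, but it does follow the strategy of the cited source: local presentability of $C(\mathcal{A})$ via Gabriel--Popescu, Smith's recognition theorem, and the four standard verifications. Points (i)--(iv) of your verification are correct in substance; in (iii) the phrase ``the cokernel is unchanged'' under transfinite composition should be replaced by ``the cokernel of the composite is the filtered colimit of the cokernels, hence acyclic by AB5,'' but the idea is right.

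The one genuine problem is your candidate for the generating set $I$. Taking monomorphisms $A'\hookrightarrow A$ between quotients of the generator and feeding them into disk-and-sphere functors is the recipe adapted to the \emph{projective} model structure, and it does not generate all monomorphisms of complexes. A pushout along $S^{n}(A)\to D^{n}(A)$ adjoins a \emph{direct summand} in degree $n-1$, and a pushout along $S^{n}(A')\to S^{n}(A)$ adjoins only cycles; consequently every relative $I$-cell complex $X\hookrightarrow Y$ built from such maps has, in each degree, a cokernel whose extension by the cycles is split, which fails for a general monomorphism (already for $0\hookrightarrow(\mathbb{Z}/4\twoheadrightarrow\mathbb{Z}/2)$ the degree-zero term is a non-split extension of the boundaries by the cycles). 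The correct construction, which is what Beke (and Hovey, in the sheaf case) actually carries out, fixes a regular cardinal $\kappa$ for which the generator is $\kappa$-presentable and takes $I$ to be a set of representatives of all monomorphisms between $\kappa$-presentable complexes; the transfinite exhaustion then proceeds by adjoining $\kappa$-presentable subcomplexes of $Y$ containing pieces of the cokernel, not by attaching disks and spheres. You correctly identify this step as the crux and defer it to the citation, but as written your proposed $I$ would not survive that step.
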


\begin{Bemerkungl}
  \label{rem:compare-Beke-KS-cat-sh}
  The fibrant objects of this model structure on
  $C(\mathcal{A})$ are precisely the objects that are
  h-injective and componentwise injective (\cite[Prop.~14.1.6 in
  the setting of section~14.3]{KS-cat-sh} where the class of
  trivial cofibrations is called $\op{QM}$ and the fibrant objects
  are called $\op{QM}$-injective).
  Any bounded below complex of injective objects is h-injective
  (\cite[13.2.4]{KS-cat-sh})
  and hence fibrant. 
  The axioms of a model structure imply that
  any object 
  of $C(\mathcal{A})$
  admits a trivial cofibration to a
  fibrant object. This is also proved in
  \cite[Thm.~14.1.7]{KS-cat-sh}.
\end{Bemerkungl}


\begin{Lemma}
  \label{l:compute-right-derived}
  Let $\mathcal{A}$ be a Grothendieck category and $F
  \colon 
  \mathcal{A} \ra \mathcal{B}$ a left exact functor to an abelian
  category $\mathcal{B}$.
  We also denote by $F\colon K(\mathcal{A}) \ra
  D(\mathcal{B})$ the 
  composition of the induced functor 
  $K(\mathcal{A}) \ra K(\mathcal{B})$
  on homotopy categories with the canonical functor
  $K(\mathcal{B}) \ra D(\mathcal{B}).$
  \begin{enumerate}
  \item
    \label{enum:RF-exists}
    $F\colon K(\mathcal{A}) \ra
    D(\mathcal{B})$ admits a right derived functor $\op{R}\!F\colon
    D(\mathcal{A}) \ra D(\mathcal{B}).$ 
  \end{enumerate}
  \begin{enumerate}[resume]
  \item 
    \label{enum:RF-for-fin-cohodim}
    If $F\colon \mathcal{A} \ra
    \mathcal{B}$ is of finite 
    cohomological dimension,
    then for all 
    componentwise $F$-acyclic complexes
    $S \in C(\mathcal{A})$
    the obvious morphism $F(S) \ra
    \op{R}\! F(S)$ in $D(\mathcal{B})$ is an isomorphism.
  \end{enumerate}
\end{Lemma}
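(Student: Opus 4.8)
The plan is to construct $\op{R}F$ from h-injective resolutions and then to reduce the acyclicity statement to complexes of bounded amplitude by repeated use of brutal (stupid) truncations. For \ref{enum:RF-exists}: since $\mathcal{A}$ is a Grothendieck category, every object of $C(\mathcal{A})$ admits a quasi-isomorphism to an h-injective complex with injective components (this is \ref{rem:compare-Beke-KS-cat-sh} together with \ref{p:model-structure-C-A}, or directly \cite{KS-cat-sh}). Restricting the localization $K(\mathcal{A}) \ra D(\mathcal{A})$ to the full subcategory of h-injective complexes gives an equivalence onto $D(\mathcal{A})$, and composing a quasi-inverse with $K(\mathcal{A}) \xra{F} K(\mathcal{B}) \ra D(\mathcal{B})$ yields the right derived functor $\op{R}F$ together with its universal natural transformation $F \ra \op{R}F$ of functors $K(\mathcal{A}) \ra D(\mathcal{B})$. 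In particular $\op{R}F(A) = F(I)$ whenever $A \ra I$ is an h-injective resolution, and, $F$ being left exact, $\op{R}F(D^{\geq k}(\mathcal{A})) \subset D^{\geq k}(\mathcal{B})$ for all $k$.

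For \ref{enum:RF-for-fin-cohodim} write $d$ for the cohomological dimension of $F$. First, any \emph{bounded} complex with $F$-acyclic components computes $\op{R}F$: induct on the number of nonzero components, using the degreewise split short exact sequences $0 \ra \sigma^{\geq t}\mathcal{C} \ra \mathcal{C} \ra \sigma^{\leq t-1}\mathcal{C} \ra 0$ (with $t$ the top nonzero degree, so $\sigma^{\geq t}\mathcal{C}$ is a single $F$-acyclic object); $F$ sends a degreewise split exact sequence of complexes to an exact sequence, hence to a triangle, the transformation $F \ra \op{R}F$ gives a morphism onto the triangle obtained by applying $\op{R}F$, and the five lemma does the rest, the base case being the definition of $F$-acyclicity. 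Essentially the same induction, now on the cohomological amplitude and with $\op{R}F(\text{object}) \in D^{[0,d]}$ in the base case and the triangle $\tau^{\leq n-1}B \ra B \ra H^n(B)[-n] \xra{+1}$ for the step, shows that $\op{R}F(B) \in D^{\leq n+d}(\mathcal{B})$ for every $B \in D^{+}(\mathcal{A})$ whose cohomology lies in degrees $\leq n$.

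The step I expect to be the main obstacle is extending this last bound to arbitrary $B \in D^{\leq n}(\mathcal{A})$, i.e.\ dropping boundedness below. Here I would choose an h-injective resolution $B \ra I$ with $I$ componentwise injective, so that $H^{k}(I) = 0$ for $k > n$. Take $p \geq -n$: then $\sigma^{\geq -p}I$ is a bounded below complex of injectives, hence h-injective, and the degreewise split sequence $0 \ra \sigma^{\geq -p}I \ra I \ra \sigma^{\leq -p-1}I \ra 0$ shows its cohomology still lies in degrees $\leq n$; by the bounded below case just treated, $\op{R}F(\sigma^{\geq -p}I) = F(\sigma^{\geq -p}I) \in D^{\leq n+d}(\mathcal{B})$. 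Applying $F$ to that sequence produces an exact sequence of complexes $0 \ra F(\sigma^{\geq -p}I) \ra F(I) \ra F(\sigma^{\leq -p-1}I) \ra 0$ whose last term is concentrated in degrees $\leq -p-1 \leq n-1$, so $H^{m}(F(I)) = 0$ for all $m > n+d$; that is, $\op{R}F(B) \in D^{\leq n+d}(\mathcal{B})$. The crucial observation is that the cohomology of an h-injective resolution of $B$ is concentrated where that of $B$ is, which is exactly what lets one cut $I$ into bounded below pieces whose derived images are already controlled by $d$.

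Finally, given a componentwise $F$-acyclic complex $S$, fix $m \in \DZ$ and pick $p > d-m$ and then $q > m$. The degreewise split sequence $0 \ra \sigma^{\geq -p}S \ra S \ra \sigma^{\leq -p-1}S \ra 0$, together with $F(\sigma^{\leq -p-1}S) \in D^{\leq -p-1}$ and $\op{R}F(\sigma^{\leq -p-1}S) \in D^{\leq -p-1+d}$ (by the bound just proved), shows that $H^{m}$ of $F$ and of $\op{R}F$ are unchanged when $S$ is replaced by $\sigma^{\geq -p}S$; likewise the sequence $0 \ra \sigma^{\geq q+1}\sigma^{\geq -p}S \ra \sigma^{\geq -p}S \ra \sigma^{[-p,q]}S \ra 0$, with the outer term in $D^{\geq q+1}$ both before and after deriving, shows $H^{m}$ is unchanged again on passing to the bounded complex $\sigma^{[-p,q]}S$. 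By the bounded case of Step 1 the natural map $F(\sigma^{[-p,q]}S) \ra \op{R}F(\sigma^{[-p,q]}S)$ is an isomorphism, so chaining the (compatible) identifications gives that $H^{m}(F(S)) \ra H^{m}(\op{R}F(S))$ is an isomorphism; as $m$ was arbitrary, $F(S) \sira \op{R}F(S)$ in $D(\mathcal{B})$. Apart from the boundedness-below issue handled in the third paragraph, every step is routine bookkeeping with degreewise split short exact sequences coming from brutal truncations.
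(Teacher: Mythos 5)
Your argument is correct, but it takes a genuinely different route from the paper. The paper's proof of part~(b) is the classical dimension-shifting argument: for an acyclic complex $T$ with $F$-acyclic components one shows that the kernels $K^i=\ker(T^i\ra T^{i+1})$ are themselves $F$-acyclic, because the short exact sequences $K^{i-1}\hra T^{i-1}\sra K^i$ give $H^p(\op{R}\!F(K^i))\cong H^{p+N}(\op{R}\!F(K^{i-N}))$ for $p>0$, which vanishes once $N$ exceeds the cohomological dimension; hence $F(T)$ is acyclic, and applying this to the cone of a fibrant resolution $S\ra I$ (whose components are $F$-acyclic and which lands in componentwise $F$-acyclic complexes since fibrant objects are componentwise injective) gives $F(S)\sira F(I)=\op{R}\!F(S)$ at once. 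You instead work with brutal truncations: you first handle bounded complexes of $F$-acyclics by induction, then establish the amplitude bound $\op{R}\!F(D^{\leq n})\subset D^{\leq n+d}$ on the full unbounded derived category by cutting an h-injective resolution into bounded-below pieces, and finally compare $H^m(F(S))$ and $H^m(\op{R}\!F(S))$ degree by degree. Your route is longer but proves more along the way: the unbounded amplitude bound you establish in your third paragraph is essentially the content of Corollary~\ref{c:range-f-shriek} (which the paper obtains separately via the explicit functor $f_{(!)}^{\mathcal K}$), whereas the paper's cone argument gets the acyclicity statement more directly without ever needing to control cohomological amplitude. Both proofs rest on the same input, namely the existence of componentwise-injective h-injective resolutions in a Grothendieck category.
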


\begin{proof}
  We equip 
  $C(\mathcal{A})$ with the injective model
  structure from
  Proposition~\ref{p:model-structure-C-A}.
  As mentioned in \ref{rem:compare-Beke-KS-cat-sh},
  its fibrant objects are h-injective so that we
  get~\ref{enum:RF-exists}. 
  
  \ref{enum:RF-for-fin-cohodim}:
  Let $\mathcal{S} \subset C(\mathcal{A})$ be the full
  subcategory of componentwise $F$-acyclic complexes. 
  Let $T \in \mathcal{S}$ be acyclic. We claim that $F(T)=0$ in
  $D(\mathcal{B}).$ 
  Let $K^i$ be the kernel of $T^i \ra T^{i+1}.$ The short exact
  sequence $K^{i-1} \hra T^{i-1} \sra K^i$ provides isomorphisms
  $H^p(\op{R}\! F(K^i)) \sira H^{p+1}(\op{R}\! F(K^{i-1}))$ for all
  $p >0.$  
  By induction we obtain 
  $H^p(\op{R}\! F(K^i)) \sira H^{p+N}(\op{R}\! F(K^{i-N}))$ for all
  $p >0$ 
  and all $N \in \DN.$ Since $F$ has finite cohomological
  dimension this shows that $K^i$ is $F$-acyclic.
  Hence we obtain 
  short exact sequences
  $F(K^{i-1}) \hra F(T^{i-1}) \sra F(K^i)$ and see that $F(T)$
  is acyclic.

  Now let $S \ra I$ be a fibrant
  resolution of some object $S \in \mathcal{S}.$ 
  By
  \ref{rem:compare-Beke-KS-cat-sh}
  any fibrant object is componentwise injective thus
  we have $I \in \mathcal{S}.$
  Hence the cone of $S \ra I$ is an acyclic object of
  $\mathcal{S}$ and mapped to zero in $D(\mathcal{B})$ by the
  above claim, so that
  $F(S) \ra F(I)$ is a quasi-isomorphism.
\end{proof}


\def\cprime{$'$} \def\cprime{$'$} \def\cprime{$'$} \def\cprime{$'$}
  \def\Dbar{\leavevmode\lower.6ex\hbox to 0pt{\hskip-.23ex \accent"16\hss}D}
  \def\cprime{$'$} \def\cprime{$'$}
\providecommand{\bysame}{\leavevmode\hbox to3em{\hrulefill}\thinspace}
\providecommand{\MR}{\relax\ifhmode\unskip\space\fi MR }
\providecommand{\MRhref}[2]{%
  \href{http://www.ams.org/mathscinet-getitem?mr=#1}{#2}
}
\providecommand{\href}[2]{#2}

\end{document}